\newcommand{\com}[1]{\fbox{\textsf{#1}}}
\renewcommand{\geq}{\geqslant}
\newcommand{\bbC}{\mathbb{C}}
\newcommand{\Kc}{\mathcal{K}}
\newcommand{\Dc}{\mathcal{D}}
\newcommand{\Mc}{\mathfrak{M}}
\newcommand{\M}{\mathcal{M}}
\newcommand{\Rc}{\mathcal{R}}
\theoremstyle{plain}
\newtheorem{main-theorem}{Theorem}
\newtheorem{theo}{Theorem}[section]
\newtheorem{lemma}[theo]{Lemma}
\newtheorem{coro}[theo]{Corollary}
\newtheorem{deff}[theo]{Definition}
\newtheorem{exam}[theo]{Example}
\newtheorem{rema}[theo]{Remark}
\newtheorem{defi}[main-theorem]{Definition}
\newtheorem{Thm}{Theorem}
\newtheorem*{clai-nn}{Claim}
\newtheorem*{theorem*}{Theorem}
\newtheorem{Qu}[Thm]{Question}
\newtheorem*{schoencond*}{Sch\"onflies Condition}
\author[1]{Jun Luo\thanks{Supported by the Chinese National Natural Science Foundation Projects 11771391.}}
\author[1,2]{Yi Yang}
\author[1]{Xiao-Ting Yao}
\affil[1]{School of Mathematics,
    Sun Yat-sen University, Guangzhou 510275, China
}
\affil[2]{Corresponding author: yangyi32@mail2.sysu.edu.cn}
\title{\Large Peano Model for Planar Compacta and a Lemma by Beardon}
\date{\today}
\begin{document}
\maketitle

\begin{abstract}
A  \emph{Peano space} is a compactum that has at most countably many non-degenerate components $\{P_n\}$, which are locally connected, such that the sequence of diameters $\{\delta(P_n):n\}$ either is finite  or converges to zero.
Given a compactum $K\subset\hat{\mathbb{C}}$.
It is known that, among all  the monotone decompositions of $K$ with Peano hyperspaces, there exists a unique one, denoted as $\Dc_K^{PS}$, that is finer than all the others. We call $\Dc_K^{PS}$  the \emph{core decomposition of $K$} with Peano hyperspace. The resulted hyperspace under quotient topology, still denoted as $\Dc_K^{PS}$, is called the {\em Peano model for $K$}. We show that $\Dc_K^{PS}$ is independent of the embedding of $K$ into $\hat{\mathbb{C}}$, in the sense that $\Dc_{h(K)}^{PS}=\left\{h(d): d\in\Dc_K^{PS}\right\}$ for any other embedding $h: K\rightarrow\hat{\mathbb{C}}$. Given a rational function $f$ with $deg(f)\ge2$ that is independent of $K$. A well known result by Beardon says that the pre-image $f^{-1}(d)$ for any $d\in\Dc_K^{PS}$ has $l\le deg(f)$ components $N_1, \ldots, N_l$ satisfying $f(N_1)=\cdots=f(N_l)=d$. We show that $N_1,\ldots, N_l\in \Dc_{f^{-1}(K)}^{PS}$. Therefore the induced map $\tilde{f}(d)=f(d)$ is continuous and solves Question 5.4 proposed by Curry (MR2642461) and extends earlier partial results by Blokh-Curry-Oversteegen (MR2737795 and MR3008890), when $K$  is assumed to be unshielded and $f$ a polynomial with $f^{-1}(K)=K=f(K)$. 
Finally, we introduce a function $\lambda_K: K\rightarrow\mathbb{N}$, such that $\lambda_K(x)\equiv0$ if and only if $K$ is a Peano space. This function and its maximum are topologically invariant, while its level set $\lambda_K^{-1}(0)$ at zero is of particular interest. For instance, when $K$ is the Mandelbrot set $\lambda_K^{-1}(0)$ contains all the hyperbolic components except possibly the roots of the primitive ones.

\textbf{Keywords.} \emph{Core Decomposition, Peano Model, Julia set, Lambda Function.}

\textbf{Mathematics Subject Classification 2010: Primary 37C70, Secondary 37F45,37F99.}
\end{abstract}

\newpage

{\footnotesize \tableofcontents}

\section{Introduction and the Main Theorems}

In this paper a compactum $K$ means a compact metric space and we mostly consider compacta in Euclidean spaces. The topology of such a compactum $K$ can be extremely complicated, even if $K$ is on the plane. To analyse certain aspects of the topology of $K$, one may turn to explore an appropriate decomposition $\Dc$ of $K$ such that the hyperspace keeps the most basic features of $K$ and is itself a member from a special family of topological spaces, whose properties are more or less well understood.
A very early example of such an analysis comes from
Moore's fundamental result: if $\Dc$ is a monotone decomposition of $\hat{\mathbb{C}}$ into non-separating continua the hyperspace is homeomorphic to $\hat{\mathbb{C}}$. Following the spirit of Moore's work we will focus on monotone decompositions of a compactum $K$. Motivated by recent studies on polynomial and rational Julia sets \cite{BCO11,BCO13,Curry10,Kiwi04,LLY-2017}  we further require the hyperspace to be a \emph{Peano space}, that is, a compactum whose non-degenerate components are locally connected and form a null sequence, in the sense that for any constant $C>0$ at most finitely many of them are of diameter greater than $C$. Clearly, a toally disconnected compactum is a Peano space. And a Peano continuum is just a connected Peano space.

Let $\Mc^{PS}(K)$ be the collection of all the monotone decompositions of a compactum $K$ with Peano hyperspaces. The member of $\Mc^{PS}(K)$ that is finer than all the others, if it exists, will be called the {\em core decomposition of $K$ with Peano hyperspace}, denoted as $\Dc_K^{PS}$.
When the core decomposition of $K$ exists, the hyperspace $\Dc_K^{PS}$ under quotient topology is called the {\em Peano model of $K$} and is still denoted as $\Dc_K^{PS}$.

The  sphere $\hat{\mathbb{C}}$ seems to be a reasonable {\em ambient space} for the compactum under consideration, if we want to study its Peano model. Actually, a compactum $K\subset\mathbb{R}$ is already a Peano space, while the Peano model for a compactum $K\subset\mathbb{R}^3$ may not exist. Example \ref{no-CD-ex} constructs a concrete continuum in $\mathbb{R}^3$ which is semi-locally connected everywhere but does not have a Peano model. By \cite[Theorem 7]{LLY-2017} the Peano model for a compactum $K\subset\mathbb{R}^2$ always exists. Here we note that the core decomposition $\Dc_K^{PS}$ of  a planar continuum $K$ may be strictly finer than the decomposition proposed by Moore \cite{Moore25-a}, whose hyperspace is also a Peano contiuum. In Example \ref{NLC-point-CD} we concretly construct such a continuum.

To fit with the setting of rational functions, we focus on compacta $K\subset\hat{\mathbb{C}}$ in the sequel.

For an unshielded compactum $K\subset\hat{\mathbb{C}}$ the core decomposition $\Dc_K^{PS}$ has been discussed  by Blokh-Curry-Oversteegen \cite{BCO11,BCO13}. When such a compactum $K$ is connected the existence of $\Dc_K^{PS}$ is obtained in \cite{BCO11}, in which the hyperspace  $\Dc_K^{PS}$ is called the finest locally connected model; when $K$ is disconnected the existence  $\Dc_K^{PS}$ is obtained in \cite{BCO13} and the corresponding hyperspace is called the finest finitely suslinian monotone model. These models are themselves planar and unshielded compacta \cite[Theorem 19]{BCO13}, and the establishment of them in \cite{BCO11,BCO13} are directly based on Moore's fundamental result from \cite{Moore25}.

It is known that every locally connected or finitely suslinian compactum $K\subset\hat{\mathbb{C}}$ is a Peano space \cite[Theorems 1 and 3]{LLY-2017}. Therefore, the Peano model obtained in \cite[Theorem 7]{LLY-2017} extends the two models in \cite{BCO11,BCO13} on unshielded case to the case of all compacta $K\subset\hat{\mathbb{C}}$. This extension does not use Moore's result and provides a negative answer to \cite[Question 5.2]{Curry10}: {\em Does there exist a rational function whose Julia set does not have a finest locally connected model ?} Moreover, it even resolves the first part of \cite[Question 5.4]{Curry10}: {\em For what useful topological properties $P$ does there exist a finest decomposition of every Julia set $J(R)$ {\rm(of a rational function $R$)} satisfying $P$ ? Is the decomposition dynamic? Which of these is the appropriate analogue for the finest locally connected model ? }

The current paper provides a definite answer to the latter two parts of \cite[Question 5.4]{Curry10} and discusses two fundamental questions: (1) does the Peano model of $K\subset\hat{\mathbb{C}}$ depend on the embedding $h: K\rightarrow\hat{\mathbb{C}}$ ? (2) how are the two decompositions $\Dc_K^{PS}$ and $\Dc_{f^{-1}(K)}^{PS}$ connected for an arbitrary rational function $f$ ? The first question is of its own interest from a topological viewpoint, and leads to the introduction of a new topological invariant. The second has direct motivations from the study of complex dynamics; moreover, it is closely related to  \cite[p.141, Factor Theorem (4.1)]{Whyburn42}, which states that if $A$ is a compactum and $\phi(A)=B$ a continuous map there exists a unique factorization $\phi(x)=\phi_2\circ\phi_1(x)$ such that $\phi_1(A)=A'$ is monotone and $\phi_2(A')=B$ is light. Here a continuous map is {\em monotone} if the point inverses are connected, and it is {\em light} if the point inverses are totally disconnected. The representation $\phi=\phi_2\circ \phi_1$ will be called the {\bf monotone-light factorization} of the continuous surjection $\phi: A\rightarrow B$. In our situation $B$ is chosen to be an arbitrary compactum $K\subset\hat{\mathbb{C}}$ and $A=f^{-1}(K)$ for an arbitrary rational function $f$; then we will determine the functions $\phi_1,\phi_2$ in the monotone factorization of the composite $\phi(x)=\pi\circ f(x)$ of the projection $\pi: K\rightarrow\Dc_K^{PS}$ with the restriction of $f$ to $f^{-1}(K)$. Here we note that $\pi$ is monotone and $f$ is light, so that $\phi=\pi\circ f$ is a {\bf light-monotone factorization}.

The core decomposition $\Dc_K^{PS}$ is helpful in understanding the topology of $K$, especially when $K$ is the connected Julia set of a polynomial $f$. In such a case, the continuum $K$ is located on the boundary of the (unbounded) component $U_K$ of $\hat{\mathbb{C}}\setminus K$ that contains $\infty$. The domain $U_K$  is conformally isomorphic to $\mathbb{D}^*=\left\{z\in\hat{\mathbb{C}}: |z|>1\right\}$.  By \cite[Lemma 16 to 17]{BCO11} the core decomposition $\Dc_K^{PS}$ is exactly the finest monotone decomposition whose elements are unions of impressions of $U_K$.
We want to mention two topics of some interest, which arise naturally in the above setting and have origins from recent studies on the dynamics of complex polynomials.

The first is motivated by Kiwi's fundamental results \cite[Theorems 2 to 3]{Kiwi04}. In terms of cored decomposition those results are recalled as follows: {\em If $K$ is the Julia set of a polynomial $f$ has no irrationally indifferent cycles then every element of the core decomposition $\Dc_K^{PS}$ is the union of finitely many prime end impressions; in particular, if $x\in K$ is a periodic or pre-periodic point of $f$ then the element of $\Dc_K^{PS}$ containing $x$ is just the singleton $\{x\}$.} More precisely, by \cite[Theorem 3]{Kiwi04} the element $\pi(x)$ of $\Dc_K^{PS}$ containing $x\in K$ equals ${\rm Fiber}(x)$. Here,
by \cite[Definition 2.5]{Kiwi04}, the set ${\rm Fiber}(x)$ consists of all the points $y\in K$ such that $x$ and $y$ lie in the same component of $K\setminus Z$ for any finite subset $Z$ consisting of periodic or pre-periodic points of $f$ with $\{x,y\}\cap Z=\emptyset$. Among others, we propose the analysis of planar continua $K$ with connected complement $\hat{\mathbb{C}}\setminus K$ that satisfy the following `FI' property:
\begin{center}
\com{\em every element $d$ of $\Dc_K^{PS}$ with $d\cap\partial K\ne\emptyset$ is made up of finitely many prime end impressions.}
\end{center}
Douady's work \cite{Douady93} ensures that the Mandelbrot set satisfies  the FI property. Naturally, one may ask the above question when $K$ is a multicorn.

The second topic is more or less connected to the existence of `sheer component' in the interior of the Mandelbrot set $\M$ (the conjecture is that such a component does not exist). We may adopt the term `ghost component', whose meaning is related to the core decomposition $\Dc_K^{PS}$ of a {\em full} compactum $K\subset\hat{\mathbb{C}}$, such that the  complement $U_K=\hat{\mathbb{C}}\setminus K$ is connected. More concretely, we follow Blokh-Curry-Oversteegen \cite{BCO11, BCO13} to derive a monotone decomposition of $\hat{\mathbb{C}}$:
\[\Dc_K^*=\Dc_K^{PS}\cup\left\{\{z\}: z\in U_K\right\}.\]
Then the hyperspace of $\Dc_K^*$ is a special type of {\em mantoid} \cite[p.27, \S 6.1]{Youngs51}, called {\em cactoid}. From Blokh-Curry-Oversteegen's discussions in \cite{BCO11,BCO13}
one can infer that the previous cactoid contains a sphere $S$ with $\pi(U_K)\subset S$, where $\pi: \hat{\mathbb{C}}\rightarrow\Dc_K^*$ is  the natural projection. If a component $G$ of $K^o$ satisfies $\pi(G)\cap S=\emptyset$ we call it  a {\em ghost component}. And we propose the question below:
\begin{center}
\com{\em Under what conditions does a compactum $K\subset\hat{\mathbb{C}}$ (with or without FI) have a ghost component?}
\end{center}


The first main result of this paper is closely related to \cite[Theorem 30]{BCO11} and \cite[Theorem 20]{BCO13}. In terms of core decompositions, those two theorems may be combined into a unified version as follows: {\em if a compactum $K$ is completely invariant under a polynomial $f$ then $f(d)\in\Dc_K^{PS}$ for each $d\in \Dc_K^{PS}$}. This will be included as a special case of the following extension, which is based on a thorough analysis on how the elements of  $\Dc_{f^{-1}(K)}^{PS}$ are changed under an arbitrary rational function $f$, that is independent of $K$. Therefore, we have found a definite answer to the second and third parts of \cite[Question 5.4]{Curry10}.

\begin{Thm}[\bf Invariance]\label{invariance}
If $K\subset\hat{\mathbb{C}}$ is a compactum and $f:\hat{\mathbb{C}}\rightarrow\hat{\mathbb{C}}$ is a rational function then  $f(d)\in\Dc_K^{PS}$ for each $d\in \Dc_{f^{-1}(K)}^{PS}$. Consequently, we have $f^{-1}\left(\Dc_K^{PS}\right)=\Dc_{f^{-1}(K)}^{PS}$, where $f^{-1}\left(\Dc_K^{PS}\right)$ is the collection of all those components of $f^{-1}(d)$ for  $d$ running through $\Dc_K^{PS}$.
\end{Thm}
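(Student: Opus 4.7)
The plan is to establish the stronger identity $\Ec = \Dc_{f^{-1}(K)}^{PS}$, where $\Ec := f^{-1}(\Dc_K^{PS})$ denotes the collection of all connected components of $f^{-1}(d)$ as $d$ ranges over $\Dc_K^{PS}$. The theorem's assertion $f(d')\in\Dc_K^{PS}$ for every $d'\in\Dc_{f^{-1}(K)}^{PS}$ will then follow immediately from Beardon's lemma, which guarantees $f(N)=d$ for any component $N$ of $f^{-1}(d)$.

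The first main step is to show that $\Ec$ is an upper semicontinuous, monotone decomposition of $f^{-1}(K)$ whose hyperspace is a Peano space. Monotonicity is automatic; upper semicontinuity follows from the continuity of $f$ together with the upper semicontinuity of $\Dc_K^{PS}$. The Peano property requires three checks: countably many non-degenerate components of the hyperspace, local connectedness of each, and the null-sequence condition on their diameters. The crucial check is the null-sequence property, and here the plan is to use the branched-covering structure of $f$ on $\hat{\mathbb{C}}$ to establish the following uniform refinement lemma: for every $\epsilon>0$ there is $\delta>0$ such that whenever $B\subset\hat{\mathbb{C}}$ satisfies $\diam(B)<\delta$, every connected component of $f^{-1}(B)$ has diameter less than $\epsilon$. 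This follows by choosing at each point $y\in\hat{\mathbb{C}}$ a neighborhood on which $f$ is modeled locally by $z\mapsto z^k$ (with $k$ the local degree), and piecing these together using the openness of $f$ and the compactness of $\hat{\mathbb{C}}$. Applied to a sequence $d_n\in\Dc_K^{PS}$ with $\diam(d_n)\to0$, the lemma yields the null-sequence condition for $\Ec$. Local connectedness of components of the hyperspace is transferred from the corresponding components in $\Dc_K^{PS}$ using that $f$ is an open map, since open continuous surjections preserve local connectedness.

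Once $\Ec\in\Mc^{PS}(f^{-1}(K))$ is verified, the defining minimality of $\Dc_{f^{-1}(K)}^{PS}$ gives that $\Dc_{f^{-1}(K)}^{PS}$ refines $\Ec$: every $d'\in\Dc_{f^{-1}(K)}^{PS}$ sits inside a unique component $N$ of some $f^{-1}(d)$, so $f(d')\subset d$. To finish I must show the reverse refinement, namely that each such $N$ is in fact a single element of $\Dc_{f^{-1}(K)}^{PS}$. The plan is to do this by a symmetric pushforward argument: form the decomposition $\Fc$ of $K$ as the equivalence closure of the relation $x\sim y$ iff $x,y\in f(d')$ for some $d'\in\Dc_{f^{-1}(K)}^{PS}$. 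By the preceding pullback result each element of $\Fc$ lies in a single element of $\Dc_K^{PS}$, so $\Fc$ refines $\Dc_K^{PS}$. Verifying $\Fc$ is itself a Peano decomposition by the analogous branched-covering/openness arguments, the minimality of $\Dc_K^{PS}$ forces $\Fc=\Dc_K^{PS}$. Combining this with Beardon's equality $f(N)=d$ and tracking how the $\Dc_{f^{-1}(K)}^{PS}$-elements inside each component $N$ are pushed forward by $f$, this rules out any non-trivial splitting of $N$.

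The principal obstacle is this last no-splitting step. The technical Peano verifications for $\Ec$ and $\Fc$ reduce to compactness and branched-covering calculations on $\hat{\mathbb{C}}$ and are expected to be routine; by contrast, closing the loop between the pullback and pushforward sides through Beardon's lemma in order to exclude a proper subdivision of a component $N$ of $f^{-1}(d)$ by $\Dc_{f^{-1}(K)}^{PS}$ requires a careful interplay between the minimality characterizations of the two core decompositions, and is where the main work will lie.
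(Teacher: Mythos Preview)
Your local-connectedness step for $\Ec=f^{-1}(\Dc_K^{PS})$ invokes the fact that open continuous surjections preserve local connectedness, but in the wrong direction: the induced map $\tilde f\colon L/\Ec\to\Dc_K^{PS}$ (with $L=f^{-1}(K)$) goes \emph{from} the space you need to prove Peano \emph{to} the known Peano space, so forward preservation says nothing. This might be repairable using that $\tilde f$ is also finite-to-one and comes from a branched cover, but that is real work you have not done. More seriously, your pushforward $\Fc$ is defined as an equivalence closure, and you never verify that it is closed (upper semicontinuous); without this the minimality of $\Dc_K^{PS}$ cannot be invoked, and your ``analogous branched-covering/openness arguments'' are not a substitute. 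Finally, even granting $\Fc=\Dc_K^{PS}$, the no-splitting step does not follow from your $\Fc$: knowing only that two points of $d\in\Dc_K^{PS}$ are linked by a chain of sets $f(d')$ gives no control over which element of $\Dc_L^{PS}$ a given $u\in N$ lies in.

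The paper avoids Peano verification entirely and works instead with the Sch\"onflies relation. It proves $f\bigl(\overline{R_L}[u]\bigr)=\overline{R_K}[f(u)]$ directly from the local model $z\mapsto z^k$ and Beardon's lemma, and this yields the refinement $\Dc_L^{PS}$ of $\Ec$ at the level of the \emph{generating relation}, not via minimality among Peano quotients. For the converse it introduces the equivalence $x\approx y\iff\pi_L(f^{-1}(x))=\pi_L(f^{-1}(y))$ on $K$, shows it is closed and contains $\overline{R_K}$ (so $\sim\ \subset\ \approx$), while the refinement already obtained gives $\approx\ \subset\ \sim$; hence $\approx\ =\ \sim$. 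The point is that $\approx$ records an \emph{equality of fiber-saturations}, not merely a chain relation: for any $x\in d$ and any $u\in N$ one gets $\pi_L(u)\in\pi_L(f^{-1}(x))$, so $\pi_L(u)$ meets the finite set $f^{-1}(x)$; letting $u$ range over $N$ exhibits $N$ as a finite disjoint union of $\Dc_L^{PS}$-elements, and connectedness of $N$ finishes. Your $\Fc$ does not carry this information, which is exactly why the no-splitting step resists your outline.
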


The above property of the core decomposition with Peano hyperspace will be referred to as the {\em invariance under rational functions}. We want to mention several issues concerning this invariance that are more noteworthy than others.

Firstly,  Theorem \ref{invariance} extends the more restricted cases discussed in \cite[Theorem 30]{BCO11} and \cite[Theorem 20]{BCO13}. The major difference comes from three facts: (1) $K$ is arbitrary in Theorem \ref{invariance} and is assumed to be unshielded in \cite{BCO11,BCO13}; (2) $K$ is independent of the rational function $f$ in Theorem \ref{invariance} and is completely invariant under $f$  in \cite{BCO11,BCO13}, while $f$ is assumed to be a polynomial. On the other hand, Theorem \ref{invariance} is also related to  \cite[p.141, Factor Theorem (4.1)]{Whyburn42}. Under the conditions of Theorem \ref{invariance} we may choose $\phi=\pi\circ f: f^{-1}(K)\rightarrow\Dc_K^{PS}$ and apply the Factor Theorem to deduce the uniquely determined maps $\phi_1: f^{-1}(K)\rightarrow A'$ and  $\phi_2: A'\rightarrow\Dc_K^{PS}$. The ending statement of Theorem \ref{invariance} then implies that the space $A'$ is homeomorphic with $\Dc_{f^{-1}(K)}^{PS}$. Therefore, the monotone map $\phi_1$ is essentially  the natural projection $\pi: f^{-1}(K)\rightarrow\Dc_{f^{-1}(K)}^{PS}$ and the light map $\phi_2$ is given by $\phi_2(d)=f(d)\in\Dc_K^{PS}$ for $d\in\Dc_{f^{-1}(K)}^{PS}$.

Secondly, the former part of Theorem \ref{invariance} does not hold if $f^{-1}(K)$ is replaced by a compactum $L$ with $f(L)=K$. See Example \ref{invariance-destroyed} for two concrete compacta $K,L\subset\mathbb{C}$ and a polynomial $f$ with $f(L)=K$ such that the image of some element of $\Dc_L^{PS}$ is the union of uncountably many elements of $\Dc_K^{PS}$. Moreover, if $K$ is a continuum the core decomposition $\Dc_K^{PS}$ in Theorem \ref{invariance} can not be replaced by $\Dc_K^{SLC}$, the core decomposition of $K$ with semi-locally connected hyperspace. See Example \ref{slc-bad}. However, this does not solve \cite[Question 5.3]{Curry10}.

Thirdly, in the special case that $K$ is the Julia set of $f$, the ending statement of Theorem \ref{invariance} reads as $f^{-1}\left(\Dc_K^{PS}\right)=\Dc_K^{PS}$. By setting $\tilde{f}(d)=f(d)$ we obtain a factor system $\tilde{f}:\Dc_K^{PS}\rightarrow\Dc_K^{PS}$ of the restriction $f: K\rightarrow K$. This provides a very helpful approach to understand the topology of the Julia set $K$ of a rational function $f$ and the dynamics of $f$ restricted to its Julia set, especially when $f$ is a polynomial. An immediate observation is that the factor system $\tilde{f}:\Dc_K^{PS}\rightarrow\Dc_K^{PS}$ is either trivial (when $\Dc_K^{PS}$ has a single element) or topologically mixing  (when $\Dc_K^{PS}$ has more than one hence uncountably many  elements). Further discussions on the case of non-trivial $\tilde{f}:\Dc_K^{PS}\rightarrow\Dc_K^{PS}$ are expected.

Lastly but not least importantly, a primary motivation for Theorem \ref{invariance} also comes from a well known result by Beardon \cite{Beardon-91}, which appears as Lemma 5.7.2 in \cite[p.95]{Beardon91}. Actually, we need to cite this result in the process of proving Theorem \ref{invariance}. As a direct corollary of Theorem \ref{invariance}, we may strengthen Beardon's result as follows.

\begin{Thm}[\bf Beardon's Lemma]\label{Beardon}
If $M\subset\hat{\mathbb{C}}$ is a continuum and $f:\hat{\mathbb{C}}\rightarrow\hat{\mathbb{C}}$ a rational function with $deg(f)\ge2$ then the pre-image $f^{-1}(M)$ has $l\le deg(f)$ components $N_1,\ldots,N_l$ with $f(N_1)=\cdots=f(N_l)=M$. If further $M$ is an element of the core decomposition $\Dc_K^{PS}$ for a compactum $K\subset\hat{\mathbb{C}}$ then every $N_i$ is an element of the core decomposition $\Dc_{f^{-1}(K)}^{PS}$.
\end{Thm}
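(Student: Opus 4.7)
The statement has two pieces, and I would handle them in the order given. The first is the classical Beardon lemma, so I would simply reproduce the usual one-line argument: a nonconstant rational map $f$ of degree $\geq 2$ is an open and closed surjection of $\hat{\mathbb{C}}$, hence its restriction $f\colon f^{-1}(M)\to M$ is both open and closed. For any component $N$ of $f^{-1}(M)$, the image $f(N)$ is then a nonempty subset of $M$ which is clopen in $M$, so by connectedness $f(N)=M$. Fixing any $w_0\in M$, each of the $l$ components of $f^{-1}(M)$ must contain at least one point of the finite fiber $f^{-1}(w_0)$; since this fiber has at most $\deg(f)$ elements, the bound $l\leq \deg(f)$ follows.

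For the strengthening I would invoke Theorem \ref{invariance} directly. Assuming $M\in\Dc_K^{PS}$ we have $M\subset K$, hence $f^{-1}(M)\subset f^{-1}(K)$, and the $N_i$ produced above are literally the connected components of $f^{-1}(M)$. Theorem \ref{invariance} states
\[
f^{-1}\!\left(\Dc_K^{PS}\right)=\Dc_{f^{-1}(K)}^{PS},
\]
where the left hand side, by the stated convention, is the family of all components of $f^{-1}(d)$ as $d$ runs through $\Dc_K^{PS}$. Specialising to $d=M$ identifies each $N_i$ as an element of $\Dc_{f^{-1}(K)}^{PS}$, which is exactly the desired conclusion.

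The real obstacle is not located in this proof at all, but in Theorem \ref{invariance}: once that invariance is granted, Beardon's lemma is strengthened essentially for free. The only sanity check is that ``components of $f^{-1}(M)$'' mean the same thing on both sides, namely that the elements of $\Dc_{f^{-1}(K)}^{PS}$ sitting over $M$ are precisely the topological components $N_1,\dots,N_l$ of $f^{-1}(M)$; this is immediate from the convention defining $f^{-1}(\Dc_K^{PS})$.
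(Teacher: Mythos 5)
Your handling of the second assertion is exactly the paper's: Theorem \ref{Beardon} is presented there as a direct corollary of Theorem \ref{invariance}, whose final identity $f^{-1}\bigl(\Dc_K^{PS}\bigr)=\Dc_{f^{-1}(K)}^{PS}$ (with the stated convention that the left side is the collection of components of $f^{-1}(d)$ for $d\in\Dc_K^{PS}$) immediately identifies each component $N_i$ of $f^{-1}(M)$ as an element of $\Dc_{f^{-1}(K)}^{PS}$. That part is correct and is the same route as the paper.

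The gap is in your one-line proof of the classical first assertion. You correctly observe that the restriction $f\colon f^{-1}(M)\to M$ is open and closed, but that only tells you that images of \emph{open} subsets are open and images of \emph{closed} subsets are closed. A component $N$ of the compact set $f^{-1}(M)$ is closed, but it need not be open in $f^{-1}(M)$ (all components of a compactum are open precisely when there are finitely many of them, and the finiteness of the number of components is part of what is being proved). So the claim that $f(N)$ is clopen in $M$ is unjustified as written: closedness of $f(N)$ is free, and the entire content of Beardon's lemma is concentrated in the missing openness. The paper sidesteps this by simply citing \cite[p.95, Lemma 5.7.2]{Beardon91}, restated as Lemma \ref{finite-monotone}. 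If you want a self-contained repair in your spirit, use that components coincide with quasi-components in a compact metric space: $N$ is the intersection of the downward-directed family of clopen sets $A\supseteq N$ in $f^{-1}(M)$; each such $A$ has $f(A)$ clopen and nonempty in the continuum $M$, hence $f(A)=M$, and compactness gives $f(N)=\bigcap_A f(A)=M$. (Alternatively, run the standard argument with a nested sequence of connected open neighborhoods of $M$ and the at most $\deg(f)$ components of their preimages.) Once every component surjects onto $M$, your counting argument via a fiber $f^{-1}(w_0)$ is fine.
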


The Peano model $\Dc_K^{PS}$ is  based on the {\em Sch\"onflies relation $R_K$} \cite[Definition 4]{LLY-2017}, the closedness of which as a subset of $K^2$ is still not known. In deed, our proof for Theorem \ref{invariance} starts from a characterization for the closure of $R_K$. See Theorem \ref{closure} in Section \ref{A}. Such a characterization leads to the definition of the $S$-function, an analogue of the $T$-function studied in \cite{FitzGerald67}. The $S$-function only depends on the topology of $K$ and may be defined on any compactum, planar or non-plananr. With the help of this characterization we can prove.

\begin{Thm}\label{embedding}
If $h: K\rightarrow\hat{\mathbb{C}}$ is an embedding of a compactum  $K\subset\hat{\mathbb{C}}$ then $h(d)\in\Dc_{h(K)}^{PS}$ for every $d\in\Dc_K^{PS}$. Therefore,  the Peano models $\Dc_K^{PS}$ and $\Dc_{h(K)}^{PS}$ are homeomorphic.
\end{Thm}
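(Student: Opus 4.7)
The plan is to leverage the characterization of $\Dc_K^{PS}$ via the Sch\"onflies relation $R_K$ together with Theorem \ref{closure}, which will describe the closure $\overline{R_K}$ through the $S$-function. Since the elements of $\Dc_K^{PS}$ are, by construction, the equivalence classes of the smallest closed equivalence relation on $K$ containing $R_K$, embedding-independence will follow once it is known that this closed equivalence relation depends only on the intrinsic topology of $K$. The whole point of introducing the $S$-function, as the paragraph preceding the theorem emphasizes, is precisely that it is defined only from the topology of $K$ (not from how $K$ sits inside $\hat{\mathbb{C}}$), so I would exploit this intrinsic nature as the central lever.

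First I would set up the candidate map $\tilde h:\Dc_K^{PS}\to\Dc_{h(K)}^{PS}$ by the natural rule $\tilde h(d)=h(d)$, and reduce the theorem to showing that $h(d)\in\Dc_{h(K)}^{PS}$ for every $d\in\Dc_K^{PS}$. Applying Theorem \ref{closure} to both $K$ and $h(K)$, I would verify that $(x,y)\in\overline{R_K}$ if and only if $x$ and $y$ are $S$-related in $K$, and likewise for $h(K)$. Because $h$ is a homeomorphism from $K$ onto $h(K)$, the $S$-functions of the two compacta transform into one another under $h$, and hence $(x,y)\in\overline{R_K}\iff (h(x),h(y))\in\overline{R_{h(K)}}$.

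Next I would transfer this to the generated equivalence relations whose classes are the elements of the respective Peano models. Since $h\times h$ carries $\overline{R_K}$ bijectively onto $\overline{R_{h(K)}}$, it also carries the transitive closure of $\overline{R_K}$ bijectively onto that of $\overline{R_{h(K)}}$; in particular every equivalence class $d\in\Dc_K^{PS}$ is mapped by $h$ onto an equivalence class $h(d)\in\Dc_{h(K)}^{PS}$. This yields the first statement of the theorem. To upgrade the set-theoretic bijection $\tilde h$ to a homeomorphism, I would observe that $\tilde h\circ\pi_K=\pi_{h(K)}\circ h$, where $\pi_K$ and $\pi_{h(K)}$ are the quotient projections; continuity of $\tilde h$ then follows from the universal property of the quotient topology. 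Since $\Dc_K^{PS}$ and $\Dc_{h(K)}^{PS}$ are compact Hausdorff, the continuous bijection $\tilde h$ is automatically a homeomorphism.

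The main obstacle is the opening step: one must be certain that the $S$-function genuinely is an intrinsic invariant of $K$, and that Theorem \ref{closure} really does identify $\overline{R_K}$ with the relation cut out by this $S$-function. Once that characterization is in hand, the present theorem reduces to a short and essentially formal argument. If the $S$-function turned out to still depend on the embedding, the entire strategy would collapse; so the real content lies in Theorem \ref{closure} (to be established in the subsequent section), and the present statement is more a corollary than an independent theorem.
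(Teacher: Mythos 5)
Your proposal follows essentially the same route as the paper: Theorem~\ref{closure} identifies $\overline{R_K}[x]$ with the intrinsically defined set $S(x)$ (this is the paper's Theorem~\ref{S-basic}, via Lemma~\ref{shrinking}), the topological invariance $h(S(x))=S(h(x))$ is the paper's Theorem~\ref{S-function invariant}, and the passage to the decomposition and the final homeomorphism are handled just as you describe. One small correction: the elements of $\Dc_K^{PS}$ are the classes of the \emph{smallest closed equivalence} containing $\overline{R_K}$, not of its transitive closure (which need not be closed); since $h\times h$ is a homeomorphism of $K\times K$ onto $h(K)\times h(K)$ carrying $\overline{R_K}$ onto $\overline{R_{h(K)}}$, it conjugates the smallest closed equivalences as well, so your argument goes through after replacing ``transitive closure'' accordingly.
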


The result of Theorem \ref{embedding} prepares the path to introduce a new topological invariant for compacta in the plane. The basic philosophy to define such an invariant is to consider the elements of $\Dc_K^{PS}$ as ``basic units'', which may be called {\em atoms} of $K$, and then analyze the hierarchy made up of
``atoms of atoms''. More precisely, for any compactum $K\subset\hat{\mathbb{C}}$ we can define a function $\lambda_K: K\rightarrow\mathbb{N}$ in the following way.

Set $N_0=K$. If there exist an integer $m\ge0$ and a (strictly) decreasing sequence of continua $N_1\supset N_2\supset\cdots\supset N_{m+1}=\{x\}$ such that $N_{i+1}\in \Dc_{N_i}^{PS}$ for $0\le i\le m$, we set $\lambda(x)=m$; otherwise, we set $\lambda(x)=\infty$.

Let $\lambda(K)=\sup\{\lambda(x): x\in K\}$. Theorem \ref{embedding} then indicates that $\lambda(K)=\lambda(h(K))$ for any homeomorphism $h: K\rightarrow L$. In other words, the quantity $\lambda(K)$ is a topological invariant among planar compacta. Clearly, it can be very difficult to compute $\lambda(K)$ for a general compactum $K\subset\hat{\mathbb{C}}$, if we know little about its topology. On the other hand, for specific choices of $K$,  the level set $\lambda_K^{-1}(0)$ can be very interesting, and is ``computable'' in some sense. See examples in Section \ref{scale-examples}.

A non-empty level set $\lambda_K^{-1}(0)$ plays an important role in analyzing the topology of $K$. Firstly, it is a $G_\delta$-set; secondly, if $K$ is the Julia set of a rational function $f$ then Theorem \ref{Beardon} implies that $\lambda_K^{-1}(0)$ is completely invariant under $f$. Therefore, when $\mu$ is an ergodic measure of the system $f: K\rightarrow K$ the measure $\mu\left(\lambda_K^{-1}(0)\right)$ is either $0$ or $1$. And, in the case that $K$ is the Mandelbrot set, the following questions are open and seem to be of some interest:

\begin{Qu}\label{D}
Let $\M$ be the Mandelbrot set. Is it true that $\lambda_\M^{-1}(0)$ contains all the parameters $c\in\partial \M$  such that $z\mapsto z^2+c$ is not infinitely renormalizable ? Is it true that all the hyperbolic components are contained in a single component of $\lambda_\M^{-1}(0)$ ? Is it true that $\lambda(\M)\le 1$ ? Let $\pi: \M\rightarrow\Dc_\M^{PS}$ be the natural projection of $\M$ onto its Peano model. Is $\pi\left(\M\setminus\lambda_K^{-1}(0)\right)$  totally disconnected ?
\end{Qu}

The above questions are mostly motivated by well known fundamental properties of $\M$. Some of those properties are closely related to the Peano model $\Dc_\M^{PS}$ and largely improve our understanding of $\lambda_\M^{-1}(0)$, and $\M$ itself. In particular, we have the following.
\begin{Thm}\label{E}
If $H$ is a hyperbolic component of $\M^o$ then every point $x\in\overline{H}$ belongs to $\lambda_\M^{-1}(0)$, except for the only cases when $H$ is primitive and $x$ is the root of $H$. However, $\frac14\in\lambda_\M^{-1}(0)$.
\end{Thm}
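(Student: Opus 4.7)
The plan is to translate the statement into a claim about elements of the core decomposition, and then feed it through the FI property of $\M$ cited from Douady's work in the introduction. By the definition of $\lambda_\M$, the condition $x\in\lambda_\M^{-1}(0)$ is equivalent to $\{x\}\in\Dc_\M^{PS}$. Coupled with the FI property --- every element of $\Dc_\M^{PS}$ meeting $\partial\M$ is a finite union of prime end impressions of $\hat{\mathbb{C}}\setminus\M$ --- the theorem reduces to showing that at each listed $x$, every prime end impression containing $x$ collapses to $\{x\}$.

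For $x$ in the interior of $H$ the argument is purely topological: the multiplier map exhibits $H$ as a conformal image of the unit disk, so $H$ is an open topological disk in $\M$ and every $x\in H$ admits an open neighborhood in $\M$ that is itself a Peano space; the finest property of $\Dc_\M^{PS}$ then forces the element through $x$ to be $\{x\}$. For $x\in\partial H$ other than the root of $H$ in the primitive case, I would use the Douady--Hubbard landing theorem together with the combinatorial description of wakes and decorations around $H$. At most two parameter rays land at such an $x$: precisely two when $x$ is the root of a satellite component attached to $H$ at rational internal angle $p/q$ with $q\ge 2$, and exactly one otherwise --- including boundary points with irrational internal angle, and the root of $H$ when $H$ is itself a satellite (as it then coincides with a parabolic boundary point of the parent component at some angle $p/q$ with $q\ge 2$). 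In each case the relevant parabolic implosion or Siegel/Cremer analysis should pin the impression of each landing ray to $x$, and the FI property then yields $\{x\}\in\Dc_\M^{PS}$.

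The exceptional assertion $\tfrac14\in\lambda_\M^{-1}(0)$ requires separate treatment since the main cardioid is primitive and $\tfrac14$ is its root. Here only the parameter ray at external angle $0$ lands at $\tfrac14$, and it approaches $\tfrac14$ along the positive real axis; the fact that $\M\cap\mathbb{R}=[-2,\tfrac14]$ terminates monotonically at $\tfrac14$ forces this impression to equal $\{\tfrac14\}$, and FI again delivers $\{\tfrac14\}\in\Dc_\M^{PS}$.

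The main obstacle is the irrational internal angle subcase of the second paragraph, where local connectedness of $\M$ at $x$ is not known in general --- this is essentially the MLC conjecture restricted to $\partial H$. Without local connectedness one must still force the unique parameter ray landing at $x$ to have trivial impression; I would attempt this through the tight combinatorial control afforded by the boundary parameterization of $H$ via the multiplier map, combined with the absence of any wake or decoration attached to $H$ at an irrational angle, so that the ray in question has no room to accumulate on anything other than its landing point.
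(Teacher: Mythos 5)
Your reduction is in the right spirit---the paper likewise reduces the theorem to showing that prime end impressions of $\hat{\mathbb{C}}\setminus\M$ meeting $\overline{H}$ are singletons (it uses Lemma \ref{LY}, a countable-union-of-impressions statement valid for every full continuum, rather than the FI property, and then observes that a countable connected set must be a point). But the core of your argument does not close, and you essentially say so yourself. The missing ingredient is Hubbard's limb estimate (Lemma \ref{Key-Fact} in the paper, i.e.\ \cite[Prop.~4.2]{Hubbard93}): every point of $\M\cap W_{H,0}$ lies in $\overline{H}$ or in a limb $L_{H,p/q}$, and ${\rm diam}\,L_{H,p/q}\le\eta_H(q)\to0$. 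This is what lets one trap any impression $Imp(\theta)$ containing $x=\phi_H^{-1}(e^{2\pi{\bf i}t})$ inside $\phi_H^{-1}(V)$ together with the limbs meeting it, a set of arbitrarily small diameter---uniformly for irrational $t$, for $t=0$ on the main cardioid (your $\frac14$ case), and, after a second application to the adjacent component, for rational $t$. Your substitute---landing theorems plus ``parabolic implosion or Siegel/Cremer analysis'' and ``absence of decorations at irrational angles''---does not produce this: knowing which rays \emph{land} at $x$ says nothing about the size of their impressions, and, more importantly, an impression can contain $x$ without the corresponding ray landing at $x$, so counting landing rays does not even enumerate the impressions you must control. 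Your remark that the irrational-angle case is ``essentially MLC restricted to $\partial H$'' is also not correct: triviality of these impressions is a theorem, following from the Yoccoz-type limb shrinking above, not an open problem.

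Two smaller points. First, if you do route the proof through FI, you still need the (easy, but necessary) connectedness argument that a finite union of impressions equal to $D(x)$ collapses once every impression containing $x$ is $\{x\}$, and you need to know that a nondegenerate element of $\Dc_\M^{PS}$ through a point of $\overline{H}$ actually meets $\partial\M$; the paper's Lemma \ref{LY} packages both issues. Second, your interior-point argument (``a locally Peano neighborhood plus the finest property forces $\{x\}\in\Dc_\M^{PS}$'') is not a consequence of the finest property, which is global; to make it rigorous you should show via Theorem \ref{closure} that $\overline{R_\M}[x]=\{x\}$ for $x\in\M^o$ and then control the saturation under the generated closed equivalence, or simply note that any nondegenerate element through $x$ must reach $\partial\M$ and then fall back on the boundary analysis.
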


Given a continuum $K$ in the plane, the level set $\lambda_K^{-1}(0)$ may not equal the set of points $x\in K$ such that $K$ is locally connected at $x$. For instance, one may choose $K$ to be the union of $[0,1]\subset\mathbb{C}$ and the Cantor product $\{u+iv: u\in\mathcal{C}, 0\le v\le1\}$, where $\mathcal{C}$ is Cantor's ternary set. Then $K$ is locally connected at $x\in K$ is if and only if $x\in[0,1]$, while $\lambda_K^{-1}(0)=[0,1]\setminus\mathcal{C}$. In the case of Mandelbrot set, it is known that $\M$ is locally connected at every point $c$ such that $z\mapsto z^2+c$ is not infinitely renormalizable. The first part of {\bf Question D} asks whether such a parameter lies in $\M_0$. If the answer to second part of {\bf Question D} is positive then $\M$ satisfies certain interesting properties; under those properties a planar continuum is locally connected if and only if it is path connected. The third part is about the estimation of $\lambda(\M)$ from above. Such a question  is largely motivated by fundamental results concerning the tricorn and other multicorns, stating that they are not path connected \cite{HS14,NS94,NS95}. A more recent result \cite{IM16} even points out that if $K$ is a multicorn there exist rational external rays that accumulate on a non-trivial arc belonging to the boundary of a hyperbolic component; therefore, we have $\lambda_K(x)\ge1$ for every limit point $x$ of such an external ray. Along this direction, the well known MLC may be modified to a more general question such as: \underline{\em Given  a multicorn $K$, is it true that $\lambda(K)\le n$ for some $n\ge1$?}

The rest of this paper is arranged as follows. Section \ref{A} gives the major steps to prove Theorem \ref{invariance} and summarizes the preliminary notions and results that will be needed. Section 3 proves Theorem \ref{closure}. Section \ref{C} proves Theorem \ref{embedding}.  And Sections 5 to 7 respectively prove Theorems \ref{connected-fiber} to \ref{final} listed in Section \ref{A}.  Section \ref{M} recalls known facts about the Mandelbrot set and proves Theorem \ref{E}.
Section \ref{ex} gives a couple of examples related to Theorem \ref{invariance}. Section \ref{scale-examples} gives some simple examples of planar compacta $K$ for which the function $\lambda_K$ can be explicitly determined.


\section{The Strategy to Prove Theorem A}\label{A}

This section outlines our proof for Theorem \ref{invariance}. We firstly note that the concept of Peano space has its origin in an ancient result by Sch\"onflies. See \cite[p.515, $\S61$, II, Theorem 10]{Kuratowski68}. Mostly due to this result, a compactum $K\subset\mathbb{C}$ is said to fulfill the \emph{Sch\"onflies condition} \cite{LLY-2017} provided that for any region $U$ bounded by two parallel lines $L_1$ and $L_2$, such that $\partial U=L_1\cup L_2$, the {\bf difference} $\overline{U}\setminus K$ has at most finitely many components intersecting both $L_1$ and $L_2$. Given $L_1$ and $L_2$, this happens if and only if $\overline{U}\cap K$ has at most finitely many components $Q_1,\ldots, Q_n$ intersecting both $L_1$ and $L_2$. 
See the following simplified depiction.
\begin{figure}[ht]
\vskip -0.25cm
\begin{center}
\begin{tikzpicture}[scale=0.618]
\draw  (-2,0)--(14,0);
\draw  (-2,5)--(14,5);

\draw[line width=2 pt ,color=gray](3,5) -- (2,3) -- (3.5,1.5) -- (3,0);
\draw[line width=2 pt ,color=gray] (6,5) -- (5,3) -- (6.5,1.5) -- (6,0);
\draw[line width=2 pt ,color=gray] (11,5) -- (10,3) -- (11.5,1.5) -- (11,0);

\node at (14.75,5) {$L_1$}; \node at (14.75,0) {$L_2$};
\node at (3.1,2.9) {$Q_1$};
\node at (6.1,2.9) {$Q_2$};
\node at (11.2,2.9) {$Q_n$};
\node at (9.0,1.4) {\Large$\cdots\cdots$};

\draw [fill=black] (11,0) circle (0.1);
\draw [fill=black] (11,5) circle (0.1);
\draw [fill=black] (6,0) circle (0.1);
\draw [fill=black] (6,5) circle (0.1);
\draw [fill=black] (3,0) circle (0.1);
\draw [fill=black] (3,5) circle (0.1);
\end{tikzpicture}
\end{center}
\vskip -1.0cm
\caption{The components $Q_1,\ldots, Q_n$ and the two lines $L_1,L_2$.}\label{long-band}
\end{figure}
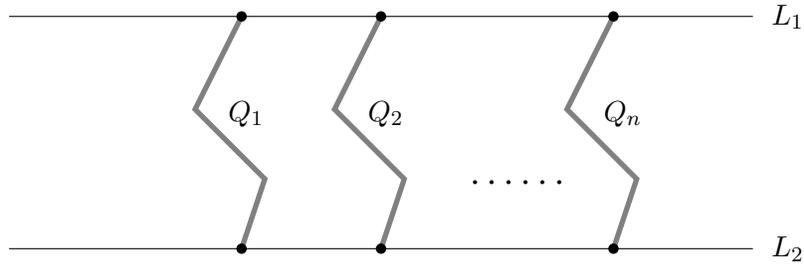

\begin{main-theorem}[cf.~{\cite[Theorem 1 and 3]{LLY-2017}}]\label{PS} A compactum $K\subset\mathbb{C}$ satisfies the Sch\"onflies condition if and only if it is a Peano space. In particular, every locally connected or finitely suslinian compactum $K\subset\mathbb{C}$ satisfies the Sch\"onflies condition and hence is a Peano space.
\end{main-theorem}

Theorem \ref{PS} provides an insight into Blokh-Curry-Oversteegen's locally connected and finitely suslinian models. Such an insight orients us to the natural extension to the Peano model $\Dc_K^{PS}$ of  a compactum $K\subset\bbC$, as developed in \cite{LLY-2017}. The corresponding decomposition $\Dc_K^{PS}$ is based on a symmetric relation $R_K$, which is recalled below.

\begin{defi}\label{R_K}
Given two disjoint simple closed curves $J_1$ and $J_2$, we denote by $U(J_1,J_2)$ the component of $\hat{\bbC}\setminus(J_1\cup J_2)$ that is bounded by $J_1\cup J_2$. This is an annulus  in $\hat{\bbC}$. Two points $x\ne y\in K$ are {\em related under $R_K$} if there exist two disjoint simple closed curves $J_1\ni x$ and $J_2\ni y$ such that $\overline{U(J_1,J_2)}\cap K$ contains an infinite sequence of components $P_n$ intersecting both $J_1$ and $J_2$, whose limit $P_\infty=\lim\limits_{n\rightarrow\infty}P_n$ under Hausdorff distance contains $\{x,y\}$. The following picture represents $\overline{U}$ as a closed annulus.
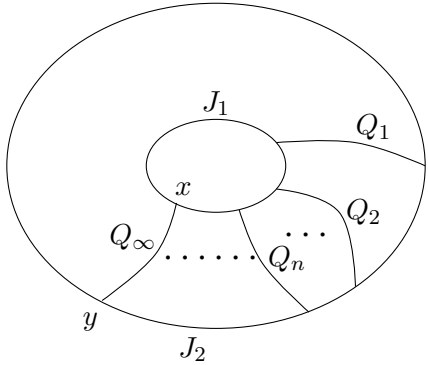
\begin{figure}[ht]
\begin{center}
\begin{tikzpicture}[scale=0.618]
\draw  (-0.5,3) ellipse (4.5 and 3.5);
\draw  (-0.5,3) ellipse (1.5 and 1);
\draw  plot[smooth, tension=.7] coordinates {(0.8,3.5) (2.5,3.5) (4,3)};
\node at (-0.55,1) {\Large$\cdots\cdots$};
\node at (1.5,1.5) {\Large$\cdots$};
\draw  plot[smooth, tension=.7] coordinates {(0.8,2.5) (2.15,2) (2.5,0.4)};
\draw  plot[smooth, tension=.7] coordinates {(-1.35,2.2) (-1.8,1.1) (-2.95,0.1)};
\draw  plot[smooth, tension=.7] coordinates {(0,2.05) (0.5,0.9) (1.5,-0.15)};
\node at (-0.5,4.35) {$J_1$};
\node at (-1,-0.95) {$J_2$};
\node at (2.8893,3.8286) {$Q_1$};
\node at (2.6786,2) {$Q_2$};
\node at (1.0286,1) {$Q_n$};
\node at (-2.2893,1.4679) {$Q_\infty$};
\node at (-1.2,2.5) {$x$};
\node at (-3.2,-0.35) {$y$};
\end{tikzpicture}
\end{center}\vskip -0.4cm
\caption{Relative locations of $P_k$ and $P_\infty$ inside the annulus $\overline{U(J_1,J_2)}$.}\label{R_K}
\end{figure}
\end{defi}

Following \cite{LLY-2017}, we also call $R_K$ the {\em Sch\"onflies relation on $K$} and denote the \emph{minimal closed equivalence containing} $R_K$ by $\sim$, or $\sim_K$ if it is necessary to emphasize the compactum $K$. This closed equivalence is  referred to as the \emph{Sch\"onflies equivalence on} $K$ \cite[Definition 4]{LLY-2017}.

By \cite[Proposition 5.1]{LLY-2017} every class $[x]$ of $\sim$ is a continuum, thus $\Dc_K=\{[x]: x\in K\}$ is a monotone decomposition. By \cite[Theorem 5 and 6]{LLY-2017},  the core decomposition $\Dc_K^{PS}$ equals  $\Dc_K$.

By \cite[Lemma 3.5]{LLY-2017}, the Sch\"onflies relation $R_K$ on a compact set $K\subset\mathbb{C}$ may be slightly generalized. See {\bf Theorem \ref{good-R}}. Such a generalization is useful in the proof for Theorem \ref{invariance} and is connected to some basic observations recalled in the next four paragraphs.

Suppose $W^*$ is an open region  whose boundary consists of $n\ge3$ disjoint simple closed curves $\Gamma_1, \ldots, \Gamma_n\subset\mathbb{C}$, with  $\Gamma_2,\ldots,\Gamma_n\subset Int(\Gamma_1)$. Here $Int(\Gamma_1)$ denotes the component of $\hat{\mathbb{C}}\setminus]\Gamma_1$ that does not contain $\infty$. Suppose that $\overline{W^*}\cap K$ has infinitely many components $P_k$ each of which intersects both $\Gamma_1$ and $\Gamma_2$. By \cite[Lemma 3.5]{LLY-2017} there exist $z_1\in\Gamma_1$ and $z_2\in\Gamma_2$ such that $(z_1,z_2)\in R_K$. More precisely, let $U$ be the annulus  with $\partial U=\Gamma_1\cup \Gamma_2$ and assume that $P_k\rightarrow P_\infty$ under Hausdorff distance.

If none of the curves $\Gamma_i$ with $i\ge3$ can be connected to $\Gamma_1\cup\Gamma_2$ by a simple arc $\alpha\subset (W^*\setminus K)$, then every region $Int(\Gamma_i)$ with $i\ge3$ is contained in a bounded component of $\mathbb{C}\setminus P$ for some component $P$ of $\overline{W^*}\cap K$. Let $K^*=\overline{Int(\Gamma_3)}\cup\cdots\cup\overline{Int(\Gamma_n)}$. Then, all but $n-2$ of the continua $P_k$ are disjoint from $K^*$ thus are components of $\overline{W^*}\cap (K\cup K^*)$. Since those $P_k$ are subsets of $\overline{U}\cap K$, each of them is a component of $\overline{U}\cap K$. Here $U$ is the open annulus with $\partial U=\Gamma_1\cup\Gamma_2$. Therefore, $(z_1,z_2)\in R_K$ for any $z_1\in(\Gamma_1\cap P_\infty)$ and $z_2\in(\Gamma_2\cap P_\infty)$.

Otherwise, a curve $\Gamma_i$ with $i\ge3$, say $\Gamma_n$, can be connected to $\Gamma_1\cup\Gamma_2$ by an arc $\alpha\subset (W^*\setminus K)$. With no loss of generality, we may assume that $\alpha$ has one end point on $\Gamma_2$, and the other on $\Gamma_n$. Then $\alpha$ may be thickened to a topological disk hose closure does not intersects $K$. Moreover, its interior is contained in $W^*\setminus K$ and its closure consists of two arcs $\alpha', \alpha''\subset (W^*\setminus K)$, together with one arc on $\Gamma_1$ and one on $\Gamma_2$. Using those arcs $\alpha', \alpha''$ we can find a new simple closed curve $\Gamma_2'\subset(\Gamma_2\cup\Gamma_n\cup\alpha'\cup\alpha'')$ and a new region $W$ whose boundary is $\Gamma_1\cup\Gamma_2'\cup\Gamma_3\cup\cdots\Gamma_{n-1}$.
\begin{figure}[ht]
\begin{center}
\begin{tikzpicture}[scale=.618]
\footnotesize
 \pgfmathsetmacro{\xone}{0}
 \pgfmathsetmacro{\xtwo}{21}
 \pgfmathsetmacro{\yone}{0}
 \pgfmathsetmacro{\ytwo}{10}

\foreach \p in {0.05,0.10,...,5.95}
    \draw[green,very thin] (\xtwo/7+\p,\ytwo/7) -- (\xtwo/7+\p,6*\ytwo/7);

\foreach \p in {0.05,0.10,...,5.95}
    \draw[green,very thin] (4*\xtwo/7+\p,\ytwo/7) -- (4*\xtwo/7+\p,6*\ytwo/7);

\foreach \p in {0.05,0.10,...,2.95}
    \draw[green,very thin] (3*\xtwo/7+\p,3.8*\ytwo/7) -- (3*\xtwo/7+\p,4.2*\ytwo/7);

\draw[gray,very thick] (\xone,\yone) -- (\xtwo,\yone) --  (\xtwo,\ytwo) --
 (\xone,\ytwo) --  (\xone,\yone);

\draw[gray,very thick] (\xone+\xtwo/7,\yone+\ytwo/7) -- (\xone+3*\xtwo/7,\yone+\ytwo/7) --  (\xone+3*\xtwo/7,\yone+6*\ytwo/7) --
(\xone+\xtwo/7,\yone+6*\ytwo/7) -- (\xone+\xtwo/7,\yone+\ytwo/7);

\draw[gray,very thick] (\xone+4*\xtwo/7,\yone+\ytwo/7) -- (\xone+6*\xtwo/7,\yone+\ytwo/7) --  (\xone+6*\xtwo/7,\yone+6*\ytwo/7) --
(\xone+4*\xtwo/7,\yone+6*\ytwo/7) -- (\xone+4*\xtwo/7,\yone+\ytwo/7);

\draw[blue,very thick] (\xone+4*\xtwo/7,\yone+4*\ytwo/7) -- (\xone+3*\xtwo/7,\yone+4*\ytwo/7);

\draw[red,very thick] (\xone+4*\xtwo/7,\yone+4.2*\ytwo/7) -- (\xone+3*\xtwo/7,\yone+4.2*\ytwo/7);

\draw[red,very thick] (\xone+4*\xtwo/7,\yone+3.8*\ytwo/7) -- (\xone+3*\xtwo/7,\yone+3.8*\ytwo/7);

 \draw[blue] (\xone,\ytwo) node[anchor=north west] {$\Gamma_1$};
 \draw[blue] (1.2*\xtwo/7,\ytwo/7) node[anchor=north east] {$\Gamma_2$};
 \draw[blue] (4.2*\xtwo/7,\ytwo/7) node[anchor=north east] {$\Gamma_n$};
 \draw[red] (3.5*\xtwo/7,3.85*\ytwo/7) node[anchor=north] {$\alpha'$};
 \draw[red] (3.5*\xtwo/7,4.2*\ytwo/7) node[anchor=south] {$\alpha''$};

\end{tikzpicture}
\end{center}\vskip -0.75cm
\caption{Relative location of $\Gamma_2'$ inside $W^*$.}\label{tunnel}
\end{figure}
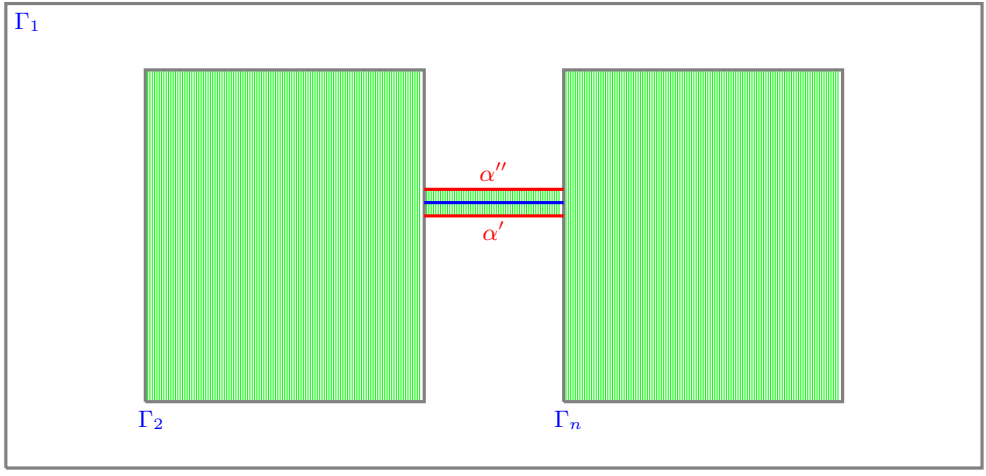
See Figure \ref{tunnel} for a simplified depiction of the region bounded by $\Gamma_2'$. From this it is immediate that all but finitely many of the above continua $P_k$ are also components of $\overline{W}\cap K$.

Repeating the same procedure on $W$ finitely many steps, if necessary, we can obtain two disjoint simple closed curves $\gamma_1$ and $\gamma_2$ such that all but finitely many of the above continua $P_k$ are also components of $\overline{U(\gamma_1,\gamma_2)}\cap K$. Here $U(\gamma_1,\gamma_2)$ is the annulus whose boundary equals $\gamma_1\cup\gamma_2$. From the construction of $\Gamma_2'$ we can infer that the curves $\gamma_1,\gamma_2$ satisfy $(\Gamma_i\cap K)\subset(\gamma_i\cap K)$ for $i=1,2$. Therefore, we can conclude that $(z_1,z_2)\in R_K$ for any $z_1\in(\Gamma_1\cap P_\infty)$ and $z_2\in(\Gamma_2\cap P_\infty)$.
Those arguments may be summarized as follows.

\begin{main-theorem}\label{good-R}
Given a compactum $K\subset\hat{\mathbb{C}}$ and two points $x,y\in K$. Then $(x,y)\in R_K$ if and only if there is an open region $W$ whose boundary consists of $n\ge2$ disjoint simple closed curves $\Gamma_1,\ldots,\Gamma_n$ such that $\overline{W}\cap K$ has infinitely many components $P_k$ intersecting both $\Gamma_1$ and $\Gamma_2$ whose limit $P_\infty=\lim_kP_k$ under Hausdorff distance satisfies $x\in(\Gamma_1\cap P_\infty)$ and $y\in(\Gamma_2\cap P_\infty)$.
\end{main-theorem}

Hereafter in this paper, let $\hat{\mathbb{C}}$ be equipped with the spherical distance $\rho$; let $D_r(x)$ denote the open ball on $\hat{\mathbb{C}}$ centered at $x$ with radius $r>0$.

The first step in proving Theorem \ref{invariance} is to analyze the structure of $R_K$ as a subset of $K\times K$, while the question whether $R_K$ is a closed relation remains open.  Actually, we will obtain a nontrivial characterization for the closure $\overline{R_K}$.

\begin{main-theorem}\label{closure}
Two points $x\ne y\in K$ are related under $\overline{R_K}$ if and only if $K\setminus(D_r(x)\cup D_r(y))$ has infinitely many components intersecting both $\partial D_r(x)$ and $\partial D_r(y)$, for $0<r<\frac{\rho(x,y)}{2}$.
\begin{center}
\begin{tikzpicture}[scale=0.618]
\draw[fill,gray!38.2]  (-7.5,1.5) circle (1.5);
\draw[fill,gray!38.2]  (3.5,1.5) circle (1.5);

\draw[very thick, purple]  plot[smooth, tension=.7] coordinates {(-7,2.9) (-1.65,4.2) (2.9375,2.8875)};
\draw[very thick, purple]  plot[smooth, tension=.7] coordinates {(-6.25,2.35) (-2,2.9) (2.3,2.3875)};
\draw[very thick, purple]  plot[smooth, tension=.7] coordinates {(-6.1125,0.9375) (-2,0.9) (2.0625,1)};

\node[very thick, purple] at (-3.75,4.35) {$Q_1$};
\node[very thick, purple] at (-3.75,3.15) {$Q_2$};
\node[very thick, purple] at (-3.75,1.35) {$Q_\infty$};

\node at (-7.5,1.5) {$D_r(x)$};
\node at (3.5,1.5) {$D_r(y)$};

\node[very thick, purple] at (-1.85,2.4) {\LARGE$\vdots$};
\node[very thick, purple] at (-1.85,1.7) {\LARGE$\vdots$};

\end{tikzpicture}
\end{center}
\end{main-theorem}

The second step is to verify that the fibers of $\overline{R_K}$ are each a continuum.

\begin{main-theorem}\label{connected-fiber}
For any $x\in K$ the fiber  $\overline{R_K}[x]=\left\{y: (x,y)\in\overline{R_K}\right\}$ at $x$ is connected.
\end{main-theorem}

The third step is to show that every fiber of $\overline{R_{f^{-1}(K)}}$ is mapped by $f$ onto a fiber of $\overline{R_K}$. This step uses a well known result by Beardon \cite[p.95, Lemma 5.7.2]{Beardon91}.

\begin{main-theorem}\label{invariant-fiber}
If $f(u)=x$ for $u\in L:=f^{-1}(K)$ and $x\in K$ then $f\left(\overline{R_{L}}[u]\right)=\overline{R_K}[x]$.
\end{main-theorem}

It is routine to verify that if $f\left(\overline{R_{L}}[u]\right)\subset\overline{R_K}[x]$ for all $u$ with $f(u)=x$ then the collection $f^{-1}\left(\Dc_K^{PS}\right)$ defined in Theorem \ref{invariance} is a monotone decomposition that is refined by the core decomposition $\Dc_{L}^{PS}$. See Corollary \ref{forward-invariance}. From this we can set $\tilde{f}(E)$ for $E\in\mathcal{D}_K^{PS}$ to be the unique element of $\Dc_K^{PS}$ that contains $f(E)$ and define a continuous map $\tilde{f}:\Dc_{L}^{PS}\rightarrow\Dc_K^{PS}$ that satisfies $\pi_K\circ f=\tilde{f}\circ\pi_{L}$. Here $\pi_K: K\rightarrow\Dc_K^{PS}$ and $\pi_{L}: L\rightarrow\Dc_{L}^{PS}$ denote the two natural projections from $K$ and $L$   onto the core decompositions, respectively. In other words, we will find a factor system for the restriction $\left.f\right|_L: L\rightarrow K$, which is of particular interest when $K$ is the Julia set of $f$. In such a case, we have $L=K$ and the following commutative diagram:
\[
\begin{array}{ccc}K&\xrightarrow{\hspace{0.6cm}f\hspace{0.6cm}}&K\\ \downarrow\!\pi&&\downarrow\!\pi\\ \Dc_K^{PS}&\xrightarrow{\hspace{0.6cm}\tilde{f}\hspace{0.6cm}}&\Dc_K^{PS}\end{array}\]

In the fourth step we continue  to introduce an equivalence $\approx$ on $K$ by requiring that $x\approx y$ if and only if the union of the elements of $\Dc_{L}^{PS}$ intersecting $f^{-1}(x)$ equals the union of those that intersect $f^{-1}(y)$. Namely, we have
\begin{equation}\label{new-equiv}
x\approx y\quad\Leftrightarrow\quad \pi_{L}\left(f^{-1}(x)\right) = \pi_{L}\left(f^{-1}(y)\right).
\end{equation}
Here one may use the openness of $f$ to show that $\approx$ is a closed equivalence and then apply Beardon's result \cite[p.95, Lemma 5.7.2]{Beardon91} to the fibers of $\overline{R_K}$ and to further verify that  $\approx$ contains $\overline{R_K}$ hence contains $\sim$, the smallest closed equivalence on $K$ containing $\overline{R_K}$. Moreover, from Theorem \ref{invariant-fiber} and the definition of $\approx$  we can infer that for any $x\in K$ and any $u\in f^{-1}(x)$ the element $\pi_L(u)\in\Dc_L^{PS}$ is sent by $f$ onto the class of $\approx$ that contains $x$. See Section \ref{coincidence} for further details. Moreover, we even have.

\begin{main-theorem}\label{final}
The two equivalences $\approx$ and $\sim$ are equal.
\end{main-theorem}

Finally, based on Theorems \ref{invariant-fiber} to \ref{final} and the fundamental properties of $\approx$ mentioned as above, we may summarize the proof for  Theorem \ref{invariance} as follows.

\begin{proof}[\bf Proof for Theorem A]
If $E\in\Dc_K^{PS}$ and $N$ is a component of $f^{-1}(E)$, then for any $x\in E$ and any $u\in N$ the pair $(x,f(u))$ belongs to $\sim$, which is equal to $\approx$ by Theorem \ref{final}. By Equation \ref{new-equiv} there exists some $v\in f^{-1}(x)$ with $\pi_L(u)=\pi_L(v)$. By Theorem \ref{invariant-fiber} the decomposition $f^{-1}\left(\Dc_K^{PS}\right)$ is refined by $\Dc_{L}^{PS}$. Therefore, we have $\pi_L(u)\subset N$. Since $\displaystyle \left\{\pi_L(w): w\in f^{-1}(x)\right\}$ is a finite family of disjoint continua, the flexibility of $u\in N$ requires that $N=\pi_L(v)=\pi_L(u)$. In other words, the component $N$ of $f^{-1}(E)$ is also an element of $\Dc_{L}^{PS}$, which is exactly what we want to prove.
\end{proof}

\section{The closure  $\overline{R_K}$ of $R_K$ as a subset of $K\times K$}

In this section we will characterize the closure of $R_K$ as a subset of $K\times K$. The whole section is a complete proof for Theorem \ref{closure}, in which a couple of fundamental steps appear as lemmas. To start off, we prepare some basic issues.


Firstly, we note that the ``if'' part is clear and we just discuss the ``only if'' part. To this end, we consider any two disjoint simple closed curves $J_1\ni x$ and $J_2\ni y$ such that $K\cap\overline{U(J_1,J_2)}$ contains an infinite sequence of distinct components $P_n$ intersecting both $J_1$ and $J_2$, whose limit $P_\infty=\lim\limits_{n\rightarrow\infty}P_n$ under Hausdorff distance contains $\{x,y\}$. Let $P_0$ be the component of $K\cap\overline{U(J_1,J_2)}$ that contains $P_\infty$. We may assume that $P_0\cap P_n=\emptyset$ for all $n\ge1$. It suffices to verify that for $0<r<\frac12{\rm dist}(J_1,J_2)$ the following holds:
\begin{center}\com{$K\setminus(D_r(x)\cup D_r(y))$ has infinitely many components intersecting both $\partial D_r(x)$ and $\partial D_r(y)$.}
\end{center}
Here, $U(J_1,J_2)$ denotes the component of $\hat{\mathbb{C}}\setminus(J_1\cup J_2)$ bounded by $J_1\cup J_2$, which is topologically an open annulus. By Sch\"onflies Theorem \cite[pp.71-72, Theorem
3 and 4]{Moise}, we may apply an appropriate homeomorphism $\phi:\hat{\mathbb{C}}\rightarrow\hat{\mathbb{C}}$ that sends  $U(J_1,J_2)$ onto $\{z:1<|z|<2\}$.  Clearly, $(x,y)\in R_K$ if and only if $(\phi(x),\phi(y))\in R_{\phi(K)}$. Therefore, we may assume in the rest of this section that $J_1=\{z:|z|=1\}$ and $J_2=\{z:|z|=2\}$.

Now, we consider the compact set $K\cap\overline{U(J_1,J_2)}$. Recall that $P_1$ is disjoint from $P_0\cup\left(\bigcup_{j\ne 1}P_j\right)$, which is a compact set. Clearly, no component of $K\cap\overline{U(J_1,J_2)}$ intersects $P_1$ and $P_0\cup\left(\bigcup_{j\ne 1}P_j\right)$ both. Since all components of $K\cap\overline{U(J_1,J_2)}$ (including all those $P_n$) are each a quasi-component of $K\cap\overline{U(J_1,J_2)}$ (see for instance \cite[p.169, \S 47,II, Theorem 2]{Kuratowski68}), we can use  compactness of $P_0\cup\left(\bigcup_{j\ne 1}P_j\right)$ to find a separation $K\cap\overline{U(J_1,J_2)}=E\cup F$ satisfying
\begin{equation}
 P_1\subset E\quad \text{and}\quad
 \left(P_0\cup\left(\bigcup_{j\ne 1}P_j\right)\right)\subset F.
\end{equation}
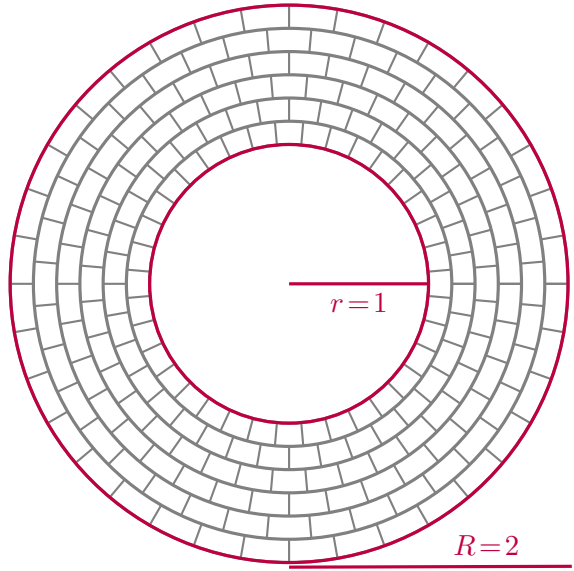
\begin{figure}[ht]
\centering\vskip -0.25cm
\begin{tikzpicture}[scale=0.618]
\draw[purple, very thick] (0:0) -- (0:3);  \draw[purple] (0:1.5) node[below] {$r\!=\!1$};
\draw[purple, very thick] (270:6.1) -- (315:6.1*1.41);  \draw[purple] (305:7.4) node[above] {$R\!=\!2$};

\foreach \r in {3,3.5,...,6.0}        \draw[gray,very thick] (0,0) circle (\r);
\foreach \r in {3,4,5}    \foreach \b in {5,15,...,355}
\draw[gray,thick] (\b:\r) -- (\b:0.5+\r);

\foreach \r in {3.5,4.5,5.5}    \foreach \b in {0,10,...,360}
\draw[gray,thick] (\b:\r) -- (\b:0.5+\r);

\foreach \r in {3,6.0}        \draw[purple,very thick] (0,0) circle (\r);
\end{tikzpicture}\vskip -0.5cm
\caption{Polar Brick Wall Tiling}\label{brick-wall}
\end{figure}
Cover the annulus $U(J_1,J_2)$ by a polar brick wall tiling $\mathcal{T}$, as indicated in Figure \ref{brick-wall}, whose tiles are all of diameter smaller than $\frac{1}{2}{\rm dist}(E, F)$.
The union of all the tiles $T\in\mathcal{T}$ with $T\cap E\ne\emptyset$ has finitely many components. Denote by $P_1^*$ the component that contains $P_1$.
Let $W_1$ be the component of $\hat{\mathbb{C}}\setminus P_1^*$ containing $\infty$. Then $W_1$ contains $P_\infty$ (hence $P_0$) and every $P_j$ with $j\neq 1$. Since $P_1^*$ is a continuum with no cut point, by Torhorst Theorem \cite[p.126]{Whyburn79}, the boundary of $W_1$ is a simple closed curve $\Gamma_1$. Moreover, $\Gamma_1$ contains exactly two sub-arcs $\alpha_1, \beta_1$ each of which intersects both $J_1$ and $J_2$ at the end points and is otherwise contained in the open annulus $U(J_1,J_2)$. Clearly, $\Theta=J_1\cup J_2\cup\alpha_1\cup\beta_1$ cuts the sphere into four Jordan domains, each of which is bounded by a simple closed curve. Let $D_1$ be the component of $\hat{\mathbb{C}}\setminus \Theta$ that intersects $P_1$. Let $D_\infty$ be the component that intersects $P_\infty$. Then $D_1\cap D_\infty=\emptyset$.

Let $I_i=\overline{D_\infty}\cap J_i$ for $i=1,2$. Then $I_i$ is a sub-arc of $J_i$ and $\partial D_\infty=I_1\cup I_2\cup\alpha_1\cup\beta_1$; moreover, we also have $\overline{D_1}\cap\overline{D_\infty}=\alpha_1\cup \beta_1$.  Since $P_\infty\subset \overline{D_\infty}$ and since $P_0\cap(\alpha_1\cup\beta_1)=\emptyset$, the convergence $P_n\rightarrow P_\infty$ under Hausdorff distance implies that $P_n\subset \overline{D_\infty}$ for all but finitely many $n>1$. We may assume that $P_n\subset \overline{D_\infty}$ for all $n\ge2$. Clearly, all those $P_n$ with $n\ge2$ are also components of $K\cap\overline{D_\infty}$, since each of them is a component of $K\cap\overline{U(J_1,J_2)}$ and since $\overline{D_\infty}\subset\overline{U(J_1,J_2)}$.

Denote the two components of $\overline{D_\infty}\setminus P_0$ containing $\alpha_1$ and $\beta_1$ by $U_L$ and $U_R$, respectively. Then either $U_L$ or $U_R$ contains infinitely many of the components $P_n$. Without loss of generality, we may assume that all $P_n (n\geq 2)$ are contained in $U_R$. Now we fix a small enough number $\delta\in(0,r)$ and consider the two disks $B_\delta(x)=\{z: |x-z|<\delta\}$ and $B_\delta(y)=\{z: |y-z|<\delta\}$.  The convergence $P_n\rightarrow P_\infty$ under Hausdorff distance indicates that there exist infinitely many $P_n\subset U_R$ intersecting $B_\delta(x)$ and $B_\delta(y)$ at the same time. By going to an appropriate sub-sequence, we may assume that all $P_n (n\geq 2)$ intersect both $B_\delta(x)$ and $B_\delta(y)$.

Recall that $\overline{D_\infty}$ is a topological disk. Without changing its topology, we may represent $\overline{D_\infty}$ as $[0,1]^2$. Moreover, for $i=1,2$ let $I_i'$ be the irreducible sub-arc of $I_i$ that intersects $\alpha_1$ and $P_2$ at the same time. The following Figure \ref{locations}
\begin{figure}[ht]
\vskip -0.4cm
\centering
\begin{tikzpicture}[scale=0.99]
\draw  (0,0) rectangle (11,7);
\coordinate [label=135:$x$] (x) at (4.5,7);
\coordinate [label=210:$y$] (y) at (3.5,0);
\draw[line width=2.5pt,color=gray] (5,7)--(x) -- (3,5) --(4.5,2.5)-- (y)--(4,0);

\draw[line width=2.5pt,color=gray] (8,7)-- (7.5,5.8)--(5.5,5.5)--(7.5,4) --(7.5,2)--(5,1)--(6.5,0.5)--(6.5,0);
\draw[line width=2pt,red](7.5,7) node (v1) {}--(5,7);
\draw[line width=2pt,red](6,0)--(4,0);

\draw[line width=2pt,color=blue] (8,7) -- (11,7);
\draw[line width=2pt,color=blue] (6.5,0) -- (11,0);

\draw  (x)[fill=black]circle(0.06);
\draw  (y)[fill=black]circle(0.06);

\draw[line width=1pt,color=gray] (2.5,7) arc(180:360:2cm);
\draw[line width=1pt,color=gray] (1.5,0) arc(180:0:2cm);
\draw[line width=1.25pt,color=blue] (7.5,7)-- (7.5,6.5)--(7,6.5)--(7,6) --(5,6)--(5,5.05)--(5.5,5.05)--(5.5,4.5)--(6.5,4.5)--(6.5,4)--(7,4)--(7,2)--(5.5,2)--(5.5,1.5)--(4.5,1.5)--(4.5,0.5)--(6,0.5)--(6,0);

\draw  (7.5,7)[fill=black]circle(0.06)node[above]{$a$};
\draw  (9.5,7)node[below]{$I_1'$};
\draw  (0,3.5)node[right]{$\beta_1$};   \draw  (11,3.5)node[left]{$\alpha_1$};
\draw  (5,5.05)[fill=black]circle(0.06)node[below]{$c$};
\draw  (4.82,1.5)[fill=black]circle(0.06)node[above]{$d$};
\draw  (6,0)[fill=black]circle(0.06)node[below]{$b$};
\draw  (8.75,0)node[above]{$I_2'$};

\draw  (5,7)[fill=black]circle(0.06)node[above]{$x'$};
\draw  (3.35,5.35)[fill=black]circle(0.06)node[below]{$c'$};
\draw  (4.30,1.82)[fill=black]circle(0.06);
\node at (3.9,1.68){$d'$};
\draw  (4,0)[fill=black]circle(0.06)node[below]{$y'$};

\node at (6.6,3.5) {$\alpha_2$};
\node at (7.9,3.5) {$P_2$};
\node at (4.3,3.63) {$P_\infty$};
\end{tikzpicture}
\vskip -0.75cm
\caption{A simplified depiction for $D_\infty$ with helpful markings.}\label{locations}
\end{figure}
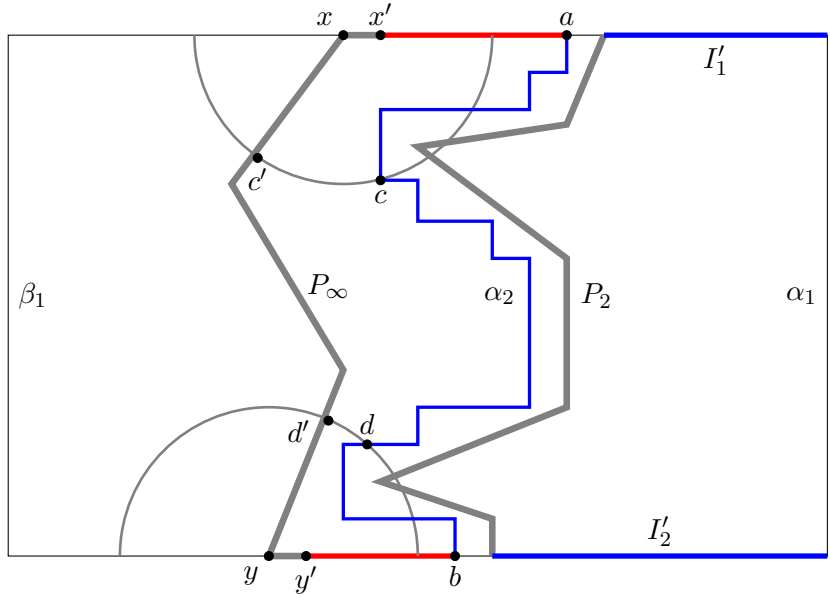
is a simplified depiction for relative locations of the arcs $\alpha_1,\beta_1,I_1,I_2$, the component $P_2$, and the two semi-circles $\overline{D_\infty}\cap\partial B_\delta(x)$ and $\overline{D_\infty}\cap\partial B_\delta(y)$. Let $A$ be the union of $P_2\cup I_1'\cup I_2'$ with all the ``bounded'' components of $\hat{\mathbb{C}}\setminus(P_2\cup I_1'\cup I_2')$, that do not contain $\infty$. The all but finitely many of $P_n$ with $n\ge3$ are disjoint from $A$. Assume that $P_n\cap A=\emptyset$ for $n\ge3$. Then $A$ and $B=P_0\cup\left(\bigcup_{n\ge3}P_n\right)$ are disjoint compact subsets of $(K\cup A)\cap \overline{D_\infty}$ and the following lemma may be applied, in which we choose $u\in A$  and $v\in P_0$.

\begin{lemma}[cf.~{\cite[p.108, (3.1) Separation Theorem]{Whyburn42}}]\label{separation-theorem}
If $A,B\subset\mathbb{R}^2$ are compact sets such that $A\cap B$ is totally disconnected and $u,v$ are points of $A\setminus B$ and $B\setminus A$, respectively, and $\epsilon$ is any positive number, then there esists a simple closed curve $J$ which separates $u$ and $v$ and is such that $J\cap(A\cup B)$ is contained in $A\cap B$ and that every point of $J$ is at a distance less than $\epsilon$ from some point of $A$.
\end{lemma}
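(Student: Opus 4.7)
The plan is to construct $J$ as the outer boundary of a carefully chosen compact polyhedral-type neighborhood $\mathcal{N}$ of $A$, where the construction uses the totally disconnected set $A\cap B$ as a ``buffer'' through which $\mathcal{N}$ is sealed off from $B$. First I would exploit the total disconnectedness of $A\cap B$ to cover it by finitely many pairwise disjoint closed topological disks $\Delta_1,\ldots,\Delta_k$ whose boundaries miss $A\cup B$. The existence of such a cover with boundaries missing $A\cap B$ is a standard property of compact zero-dimensional planar sets (they admit a neighborhood basis consisting of finite unions of disjoint Jordan disks); pushing $\partial\Delta_i$ off the remaining parts of $A\cup B$ requires a perturbation argument using that the complement of $A\cup B$ clusters from every side of each point of $A\cap B$. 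I would choose the diameters of the $\Delta_i$ to be less than $\min(\epsilon,\operatorname{dist}(u,A\cap B),\operatorname{dist}(v,A\cap B))/2$.

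Having excised $A\cap B$, the sets $A':=A\setminus\bigcup\operatorname{int}(\Delta_i)$ and $B':=B\setminus\bigcup\operatorname{int}(\Delta_i)$ are disjoint compacta, so $\delta:=\operatorname{dist}(A',B')>0$. Next I would take a square grid of mesh less than $\min(\delta/3,\epsilon/2)$ and form $\mathcal{N}$ as the union of $\bigcup_i\Delta_i$ with all closed grid squares meeting $A'$. By construction $A\subset\operatorname{int}(\mathcal{N})$, the set $\mathcal{N}$ lies within distance $\epsilon$ of $A$, and $\partial\mathcal{N}$ misses $A\cup B$: the intersection with $A$ is empty because $A\subset\operatorname{int}(\mathcal{N})$; the intersection with $B'$ is empty by the mesh choice (each grid square meeting $A'$ lies within $\delta/3$ of $A'$, hence is disjoint from $B'$); and the intersection with $B\cap\bigcup\Delta_i$ sits inside $\bigcup\partial\Delta_i$, which misses $B$ by the arrangement of the first paragraph.

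Finally, let $C$ be the component of $\mathcal{N}$ containing $u$, and let $V$ be the component of $\hat{\mathbb{C}}\setminus C$ containing $v$ (with $\delta$ chosen small enough to ensure $v\notin C$). Setting $J:=\partial V$ and invoking a Torhorst-type argument on $C$, or on the continuum obtained from $C$ by filling in those complementary components that do not contain $v$, yields that $J$ is a single simple closed curve. By construction $J$ separates $u$ from $v$, $J\subset\partial\mathcal{N}$ is disjoint from $A\cup B$ (hence vacuously contained in $A\cap B$), and every point of $J$ lies within distance $\epsilon$ of $A$, which are precisely the three properties asked for.

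The main obstacle I anticipate is the perturbation step in the first paragraph, namely guaranteeing $\partial\Delta_i\cap(A\cup B)=\emptyset$ rather than merely $\partial\Delta_i\cap(A\cap B)=\emptyset$; this requires local planar topology near $A\cap B$ and an inductive adjustment that simultaneously keeps the $\Delta_i$ pairwise disjoint and of small diameter. A secondary obstacle is the Torhorst extraction in the last step, where one must verify that the filling-in operation produces a continuum to which the simple-closed-curve boundary characterization applies, without changing whether $v$ lies outside.
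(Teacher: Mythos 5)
This lemma is imported by the paper verbatim from Whyburn's \emph{Analytic Topology} (the ``cf.''~citation); the paper gives no proof of it, so your argument stands or falls on its own. It falls: you construct $J\subset\partial\mathcal{N}$ \emph{disjoint} from $A\cup B$ and observe that $J\cap(A\cup B)\subset A\cap B$ then holds ``vacuously''. But the lemma is phrased the way it is precisely because such a $J$ need not exist. If $u$ and $v$ lie in the same component of $A\cup B$ --- perfectly compatible with $A\cap B$ being totally disconnected --- then every simple closed curve separating $u$ from $v$ must meet the connected set $A\cup B$, and the only admissible meeting points are those of $A\cap B$. Concretely, take $A=[0,1]\times\{0\}$, $B=\{1/2\}\times[-1,1]$, $u=(0,0)$, $v=(1/2,1)$: here $A\cap B=\{(1/2,0)\}$, yet $A\cup B$ is a continuum containing both $u$ and $v$, so any separating $J$ with $J\cap(A\cup B)\subset A\cap B$ must actually pass through $(1/2,0)$.

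The step that breaks is the very first one, which you flagged as an ``obstacle'' but which is in fact impossible rather than merely delicate: one cannot cover $A\cap B$ by small closed disks $\Delta_i$ with $\partial\Delta_i\cap(A\cup B)=\emptyset$. In the example above, any small disk $\Delta$ with $(1/2,0)\in\operatorname{int}(\Delta)$ has boundary separating $(1/2,0)$ from $u$; since $A$ is connected and contains both points, $\partial\Delta\cap A\neq\emptyset$. This is a connectivity obstruction, not a general-position issue, so no perturbation or inductive adjustment can remove it. The failure then propagates: $B$ is connected, meets $\mathcal{N}$ at $(1/2,0)$ and contains $v\notin\mathcal{N}$, so $\partial\mathcal{N}$ meets $B$, and it does so at points of $B\setminus A$; any $J\subset\partial\mathcal{N}$ separating $u$ from $v$ would have to contain such points, violating the conclusion. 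A correct proof must instead assemble $J$ from arcs in the complement of $A\cup B$ that are joined to one another \emph{at} points of $A\cap B$; the total disconnectedness of $A\cap B$ is used to locate these passage points (and to keep them few and controllable), not to excise a neighborhood of $A\cap B$. Your grid-plus-Torhorst machinery in the last two paragraphs is fine in spirit, but it is applied to a set $\mathcal{N}$ that cannot have the separation properties you need.
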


If we choose the number $\epsilon$ in Lemma \ref{separation-theorem} to be smaller than $\frac12{\rm dist}(A,B)$ then we may find a simple arc $\alpha_2\subset(J\cap\overline{D_\infty})$ starting from a point $a\in I_1$ and ending at a point $b\in I_2$ such that $D_\infty\setminus\alpha_2$ is the union of two Jordan domains, one contains $A$ and the other $B$. Check Figure \ref{locations}, in which the arc $\alpha_2$ is represented as a broken line. Since $P_2$ intersects each of the two semi-circles $\overline{D_\infty}\cap\partial B_\delta(x)$ and $\overline{D_\infty}\cap\partial B_\delta(y)$, so does $\alpha_2$.

Let $x'$ be the point of $P_\infty\cap I_1$ that is closest to $a$, and $y'$ the point of $P_\infty\cap I_2$ closest to $b$.
Let $c$ be the last point of $\alpha_2$ at which $\alpha_2$ leaves $\partial B_\delta(x)$ and $d$ the first point after $c$ at which $\alpha_2$ meets $\partial B_\delta(y)$. The sub-arc of $\alpha_2$ from $c$ to $d$ is denoted by $\alpha_2'$.
Let $c'$ be the point of $\partial B_\delta(x)\cap P_0$ that is closest to $c$. Let $d'$ be the point of $\partial B_\delta(y)\cap P_0$ that is closest to $d$.

Let $M=P_\infty\cup\overline{ax'}\cup\alpha_2\cup\overline{by'}$. Then $M$ is a continuum and exactly one of the components of $\hat{\mathbb{C}}\setminus M$, denoted $V$, has a boundary that contains $\overline{ax'}\cup\alpha_2\cup\overline{by'}$.
Let $N=P_\infty\cup\widetilde{cc'}\cup\alpha_2'\cup\widetilde{dd'}$. Then  $N$ is a continuum and exactly one of the components of $\hat{\mathbb{C}}\setminus N$, denoted $W$, has a boundary that contains $\widetilde{cc'}\cup\alpha_2'\cup\widetilde{dd'}$.
Clearly, we have $W\subset V\subset U(J_1,J_2)$.
Since $V\subset U(J_1,J_2)$ we see that every $P_n$ with $n\geq 3$ is a component of $K\cap\overline{V}$. The last step of our proof is to obtain
\begin{lemma}\label{key-step}
For all $n\ge3$ there is a component $Q_n$ of $K\cap\overline{W}$ with $Q_n\subset P_n$ that intersects $\partial B_\delta(x)$ and $\partial B_\delta(y)$ both. Moreover, all those $Q_n$ are each a component of $K\setminus(B_\delta(x)\cup B_\delta(y))$.
\end{lemma}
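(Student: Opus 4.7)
The plan is to work inside the closed Jordan region $\overline{V}$, exploit the connectivity of each $P_n$, and invoke a standard compact-space separation argument to extract the desired subcontinua. First I would check that every $P_n$ with $n\ge 3$ lies in $\overline{V}$: the arc $\alpha_2$ cuts $\overline{D_\infty}$ into two Jordan regions, namely $V$ (whose boundary carries $P_\infty$) and a complementary region containing $A\supset P_2$; since $P_n\cap A=\emptyset$ and $P_n$ is connected, $P_n$ sits on the $P_\infty$-side. By the construction $P_n$ also meets both open disks $B_\delta(x)$ and $B_\delta(y)$, and hence meets both boundary circles.

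Next I would produce $Q_n$ as a component of the compact set $F_n := P_n\setminus(B_\delta(x)\cup B_\delta(y))$ that meets both boundary circles. To see that such a component exists, assume for contradiction that no component of $F_n$ meets both of the disjoint closed sets $E_1 := F_n\cap\partial B_\delta(x)$ and $E_2 := F_n\cap\partial B_\delta(y)$. Then a standard separation theorem for compacta provides a partition $F_n = F^x\sqcup F^y$ into disjoint closed sets with $E_i\subset F^i$; combining $F^x$ with $P_n\cap B_\delta(x)$ and $F^y$ with $P_n\cap B_\delta(y)$ produces a clopen partition of $P_n$, contradicting its connectedness. To conclude $Q_n\subset\overline{W}$, I would use that $\widetilde{cc'}\cup\widetilde{dd'}$ together with $\alpha_2'$ and the relevant sub-arc of $P_\infty$ cut $\overline{V}$ into $\overline{W}$ and two side pieces $V_x, V_y$; since $Q_n$ avoids both open disks but meets both boundary circles, and any route inside a side piece from one circle to the opposite one must pass through the corresponding open disk, $Q_n$ cannot lie in $V_x$ or $V_y$, and so $Q_n\subset\overline{W}$. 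Because $P_n$ is itself a component of $K\cap\overline{V}\supset K\cap\overline{W}$, this already forces $Q_n$ to be a component of $K\cap\overline{W}$.

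For the ``moreover'' clause, I would use that $P_n$ is isolated from $P_0$ and from every other $P_m$ by the separation $K\cap\overline{U(J_1,J_2)}=E\cup F$ introduced at the start of the construction; this isolation persists after deleting the small disks $B_\delta(x)\cup B_\delta(y)$, so the component of $K\setminus(B_\delta(x)\cup B_\delta(y))$ that contains $Q_n$ is confined to $P_n\setminus(B_\delta(x)\cup B_\delta(y))$, of which $Q_n$ is already a component, yielding equality. The step I expect to be the main obstacle is the geometric verification that $Q_n\subset\overline{W}$: the choices of $c, d, c', d'$ as ``last exit'' and ``closest'' points are designed precisely so that $V_x$ and $V_y$ are swallowed by the corresponding open disks, but making this rigorous requires a careful topological analysis of how $\alpha_2$ and $P_\infty$ interact with $\partial B_\delta(x)$ and $\partial B_\delta(y)$ in the general case where $P_\infty$ need not be an arc.
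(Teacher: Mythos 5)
Your treatment of the first assertion is essentially the paper's argument in a slightly reorganized form: the paper extracts $Q_n$ directly as a component of $P_n\cap\overline{W}$ meeting both circles (via Boundary Bumping plus the same connectedness/splitting trick you use), and then upgrades it to a component of $K\cap\overline{W}$ by the maximality of $P_n$ in $K\cap\overline{V}$; your route through $P_n\setminus(B_\delta(x)\cup B_\delta(y))$ followed by the geometric verification that $Q_n\subset\overline{W}$ lands in the same place and relies on the same facts about how $\overline{W}$ sits between the two disks.

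The genuine gap is in your argument for the ``moreover'' clause. First, a small misattribution: the separation $K\cap\overline{U(J_1,J_2)}=E\cup F$ was chosen to isolate $P_1$ from $P_0\cup\bigl(\bigcup_{j\ne1}P_j\bigr)$, not to isolate $P_n$ for $n\ge3$; this is repairable since components of $K\cap\overline{U(J_1,J_2)}$ are quasi-components. The real problem is the step ``this isolation persists after deleting the small disks, so the component of $K\setminus(B_\delta(x)\cup B_\delta(y))$ containing $Q_n$ is confined to $P_n$''. A clopen splitting of $K\cap\overline{U(J_1,J_2)}$ (or of $K\cap\overline{W}$) says nothing by itself about the strictly larger compactum $K\setminus(B_\delta(x)\cup B_\delta(y))$, which also contains $K\cap Int(J_1)$, the part of $K$ outside $J_2$, and the part of $K\cap\overline{D_\infty}$ lying outside $\overline{W}$; a connected subset of $K\setminus(B_\delta(x)\cup B_\delta(y))$ containing $Q_n$ could a priori cross $\partial W$ and pick up such points, so confinement to $P_n$ does not follow from $P_n$ being a component of the smaller set. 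What is actually needed, and what the paper proves, is that $Q_n$ can meet $\partial W$ only along the circular arcs $\widetilde{cc'}$ and $\widetilde{dd'}$ (being disjoint from $P_\infty$ and from $\alpha_2'$), and that these arcs lie in the interior of $M=\overline{W}\cup B_\delta(x)\cup B_\delta(y)$; consequently a clopen piece $A_n$ of $K\cap\overline{W}$ with $Q_n\subset A_n$ and $A_n\cap P_0=\emptyset$ satisfies $A_n\subset M^o$, hence is separated from $K\setminus M$, and the splitting does extend to all of $K\setminus(B_\delta(x)\cup B_\delta(y))$. That interiority argument --- which uses precisely the ``last exit'' and ``closest point'' choices of $c,c',d,d'$ that you flagged as delicate --- is the missing ingredient; the separation $E\cup F$ cannot substitute for it.
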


Applying \cite[p.73, Boundary Bumping Theorem I]{Nadler92} to each of the components $P_n(n\ge3)$, we can infer that every component of $P_n\cap\overline{W}$ intersects either $\partial B(x,\delta)$ or $\partial B(y,\delta)$. This implies that one of those components, denoted as $Q_n$, intersects both $\partial B_\delta(x)$ and $\partial B_\delta(y)$. Indeed, the compact set $P_n\cap\overline{W}$ is the union of two compact subsets, one is formed by the components of $P_n\cap\overline{W}$ intersecting  $\partial B_\delta(x)$ and the other by those intersecting  $\partial B_\delta(y)$. Denote these two sets as $P_{n,x}$ and $P_{n,y}$, respectively. Then $P_{n,x}\cup(P_n\cap B_\delta(x))$ and $P_{n,y}\cup(P_n\cap B_\delta(y))$ are two compact sets whose union is exactly $P_n$. Therefore, connectedness of $P_n$ ensures that $P_{n,x}\cap P_{n,y}\ne\emptyset$.

We claim that $Q_n$ ( as a component of $P_n\cap\overline{W}$ ) is also a component of $K\cap\overline{W}$, which verifies the former part of Lemma \ref{key-step}.  Actually, let $Q_n'$ be the component of $K\cap\overline{W}$ containing $Q_n$, then we have $Q_n'\subset P_n$ and $Q_n'\subset\overline{W}$. This indicates that $Q_n'$ is a connected subset of $P_n\cap\overline{W}$. Therefore, $Q_n'$ is contained in $Q_n$, the component of  $P_n\cap\overline{W}$ that intersects $Q_n'$.

Before verifying the latter part of Lemma \ref{key-step}, we recall that all the quasi-components of $K\cap\overline{W}$ are connected and coincide with  the components. See~\cite[p.169, \S 47, II, Theorem 2]{Kuratowski68}. Using this result, we may find for each $n\geq3$ a separation $K\cap\overline{W}=A_n\cup B_n$ with $Q_n\subset A_n$ and $P_0\cap A_n=\emptyset$. Here $A_n$ and $B_n$ are separated in the sense that $\overline{A_n}\cap B_n=\emptyset=A_n\cap\overline{B_n}$. Since $Q_n$ is a component of $A_n$ for all $n\ge3$, we only need to check that
\[K\setminus(B_\delta(x)\cup B_\delta(y))=A_n\cup B_n\cup(K\setminus M)\]
is a separation, where $M=B_\delta(x)\cup B_\delta(y)\cup \overline{W}$. To this end, we firstly  infer that the open arcs $\widetilde{cc'}$ and $\widetilde{dd'}$ are in the interior $M^o$ of $M$. Actually, there is an open disk $B_\epsilon(p)$  for any $p\in\widetilde{cc'}$,
satisfying $\overline{B_\epsilon(p)}\cap\left(P_0\cup\widetilde{dd'}\cup\alpha_2\right)=\emptyset$ and $B_r(p)\cap\partial B_\delta(x)\subset\widetilde{cc'}$. Clearly, the arc $\widetilde{cc'}$ divides the ball $B_r(p)$ into two parts, the part lying in $B_\delta(x)$ is denoted as $V_1$ and the other part $V_2$. As $V_2\cap\partial W=\emptyset=\overline{V_1}\cap W$ and $p\in \partial V_2$, we know that $V_2$ intersects $W$ and is contained in $W$. Therefore, we have $\widetilde{cc'}\subset M^o$. Since a similar argument ensures $\widetilde{dd'}\subset M^o$, we have $\overline{K\setminus M}\cap\left(\widetilde{cc'}\cup\widetilde{dd'}\right)=\emptyset$.
Combining this with the containment $A_n\subset \left(W\cup\widetilde{cc'}\cup\widetilde{dd'}\right)$, we see that $A_n$ and $K\setminus C$ are separated, {\em i.e.} $(K\setminus C)\cap \overline{A_n}=\emptyset=\overline{K\setminus C}\cap A_n$. Since $A_n$ and $B_n$ are  separated, we have verified that $K\setminus(B_\delta(x)\cup B_\delta(y))=A_n\cup\left(B_n\cup (K\setminus C)\right)$ is a separation.  \qed


\section{Peano model is independent of the embedding of $K$ into $\hat{\mathbb{C}}$}\label{C}

The proof for Theorem \ref{embedding} is related to a new set-function very similar to FitzGerald-Swingle's T-function \cite{FitzGerald67}, which plays a crucial role in the study of semi-locally connected model of continua.


\begin{deff}\label{S-function}
Given a compactum $K$ and a point $x\in K$, let $S(x)$ consist of all the points $y\in K$ satisfying the following property: there do not exist two disjoint open sets $U_x, U_y \subset K$ with $x\in U_x$ and $y\in U_y$ such that $K\setminus\left(U_x\cup U_y\right)$ has at most finitely many components intersecting $\partial U_x$ {and} $\partial U_y$ at the same time.
\end{deff}

\begin{lemma}\label{shrinking}
Given two disjoint open sets $U_1, U_2$ in a compactum $K$ such that $K\setminus\left(U_1\cup U_2\right)$ has at most finitely many components intersecting $\partial U_1$ {and} $\partial U_2$ at the same time. Denote those components as $Q_1,\ldots,Q_n$. Then,  for any open sets $W_1\subset U_1$ and $W_2\subset U_2$, the difference $K\setminus\left(W_1\cup W_2\right)$ has at most $n$ components intersecting both $\partial W_1$ {and} $\partial W_2$.
\end{lemma}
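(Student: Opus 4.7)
The plan is to establish an injection from the set of components of $K\setminus(W_1\cup W_2)$ meeting both $\partial W_1$ and $\partial W_2$ into the finite family $\{Q_1,\ldots,Q_n\}$. Specifically, I would show that every such component $Q'$ contains some $Q_i$; since each $Q_i$ is contained in at most one component of $K\setminus(W_1\cup W_2)$, assigning to $Q'$ a chosen $Q_i\subset Q'$ is then an injection, bounding the number of such $Q'$ by $n$.

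Fix a component $Q'$ of $K\setminus(W_1\cup W_2)$ meeting both $\partial W_1$ and $\partial W_2$. As $W_1\cup W_2$ is open in $K$, the set $K\setminus(W_1\cup W_2)$ is closed in the compactum $K$, and $Q'$ is a continuum. Set $V_i=Q'\cap U_i$ and $Y=Q'\cap\bigl(K\setminus(U_1\cup U_2)\bigr)$, so $V_1,V_2$ are disjoint open subsets of $Q'$ with $Q'=V_1\cup V_2\cup Y$. Since $U_1,U_2$ are open and disjoint, $\overline{U_1}\cap U_2=\emptyset=\overline{U_2}\cap U_1$, and consequently $\overline{V_i}\setminus V_i\subset\partial U_i\subset Y$ for $i=1,2$. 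I would first check that $Y\cap\partial U_i\neq\emptyset$: if $V_i=\emptyset$, then any point of $Q'\cap\partial W_i\subset Q'\cap\overline{U_i}$ already lies in $\partial U_i\cap Y$; otherwise $V_i$ is a non-empty proper open subset of the connected $Q'$ (proper because $Q'$ meets $\overline{U_{3-i}}$, which is disjoint from $U_i$), so $\overline{V_i}\setminus V_i$ is non-empty and lies in $\partial U_i\cap Y$.

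Write $A=Y\cap\partial U_1$ and $B=Y\cap\partial U_2$. The central step is to find a component of $Y$ meeting both $A$ and $B$, argued by contradiction. Suppose no such component exists. Since $Y$ is compact Hausdorff, components coincide with quasi-components, so a standard compactness argument (cover $A$ by clopen subsets of $Y$ avoiding $B$ and reduce to finitely many) yields a clopen partition $Y=Y_A\sqcup Y_B$ with $A\subset Y_A$ and $B\subset Y_B$. I would then check that $V_1\cup Y_A$ is clopen in $Q'$: its closure in $Q'$ satisfies $\overline{V_1\cup Y_A}\subset V_1\cup A\cup Y_A=V_1\cup Y_A$ (using $\overline{V_1}\subset V_1\cup\partial U_1\subset V_1\cup A$ and $Y_A$ closed in the closed set $Y\subset Q'$), and its complement $V_2\cup Y_B$ is closed by the symmetric computation. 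This clopen partition of the continuum $Q'$ into two non-empty pieces contradicts connectedness.

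Hence some component $C$ of $Y$ meets both $\partial U_1$ and $\partial U_2$. As a connected subset of $K\setminus(U_1\cup U_2)$ meeting both boundaries, $C$ lies in a component of $K\setminus(U_1\cup U_2)$ that also meets both, which by hypothesis is one of $Q_1,\ldots,Q_n$; say $C\subset Q_i$. Finally $Q_i$ is connected, lies in $K\setminus(U_1\cup U_2)\subset K\setminus(W_1\cup W_2)$, and meets $Q'$, so $Q_i\subset Q'$, completing the assignment. The main technical obstacle I anticipate is the clopen separation step: it hinges on the containment $\overline{V_i}\setminus V_i\subset\partial U_i\subset Y$, which itself requires the disjointness $\overline{U_i}\cap U_{3-i}=\emptyset$ afforded by the openness of the $U_i$ in $K$.
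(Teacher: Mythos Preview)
Your proof is correct and follows the same overall strategy as the paper: associate to each component $Q'$ of $K\setminus(W_1\cup W_2)$ meeting both $\partial W_i$ a component of $Q'\setminus(U_1\cup U_2)$ meeting both $\partial U_i$, identify the latter with some $Q_j$, and use $Q_j\subset Q'$ to obtain an injection. The one substantive difference lies in how the existence of that sub-component is established. The paper invokes the Boundary Bumping Theorem (Nadler, Theorem 5.6) to guarantee that every component of $P\setminus(U_1\cup U_2)$ touches $\partial U_1\cup\partial U_2$, and then argues that the two ``halves'' $A_1,A_2$ of $P$ must overlap by connectedness. You instead work directly: assuming no component of $Y$ meets both $A$ and $B$, you build a clopen partition $Y=Y_A\sqcup Y_B$ via the quasi-component description and show $V_1\cup Y_A$ and $V_2\cup Y_B$ separate $Q'$. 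Your route is slightly more self-contained (no external citation) and makes the separation explicit; the paper's route is shorter once the Boundary Bumping Theorem is granted. Both arrive at the same injection, and the injectivity step (distinct $Q'$ yield distinct $Q_j$ because $Q_j\subset Q'$) is identical.
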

\begin{proof}
Actually, denote by $\mathcal{P}_W$ the family of all the components of $K\setminus\left(W_1\cup W_2\right)$ that intersect $\partial W_1$ and $\partial W_2$ at the same time. Then, applying the well known Boundary Bumping Theorem II \cite[Theorem 5.6, p74]{Nadler92} to each component $P\in\mathcal{P}_W$, we can infer that every component of  $P\setminus\left(U_1\cup U_2\right)$ intersects either $\partial U_1$ or $\partial U_2$. Let $A_1$ be the union of $P\cap U_1$ with all the components of  $P\setminus\left(U_1\cup U_2\right)$ intersecting $\partial U_1$  and $A_2$ be the union of $P\cap U_2$ with all those that intersect $\partial U_2$. The connectedness of $P=A_1\cup A_2$ then implies that $P\setminus\left(U_1\cup U_2\right)$ has at least one component, denoted as $P'$, that intersects both $\partial U_1$ and $\partial U_2$. Since $P'$ is also component of $K\setminus\left(U_1\cup U_2\right)$, we have $P'\in\{Q_1,\ldots,Q_n\}$. Clearly, for $P_1\ne P_2\in\mathcal{P}_W$ we necessarily have $P_1'\ne P_2'$. Therefore, the collection $\mathcal{P}_W$ has at most $n$ members.
\end{proof}


If $K$ in the above Definition \ref{S-function} is a subset of $\hat{\mathbb{C}}$ then the S-function is closely connected with the Sch\"onflies relation $R_K$. Actually, for any $x,y\in K$ the result of Theorem \ref{closure} ensures that $(x,y)\in\overline{R_K}$  if and only if there does not exist a positive number $r<\frac{\rho(x,y)}{2}$ such that $K\setminus(D_r(x)\cup D_r(y))$ has at most finitely many components intersecting both $\partial D_r(x)$ and $\partial D_r(y)$. By Lemma \ref{shrinking}, we can verify that this happens if and only if $y\in S(x)$. Therefore, we have the following characterization of $S(x)$ via the
Sch\"onflies relation $R_K$.

\begin{theo}\label{S-basic}
If $K\subset\hat{\mathbb{C}}$ is a compactum then $S(x)=\overline{R_K}[x]$ for all  $x\in K$.
\end{theo}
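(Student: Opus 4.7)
The plan is to prove the two inclusions $S(x)\subset\overline{R_K}[x]$ and $\overline{R_K}[x]\subset S(x)$ separately, using Theorem \ref{closure} to replace the abstract ``for every pair of disjoint open neighborhoods'' condition defining $S(x)$ by the concrete one involving round spherical disks.

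The inclusion $S(x)\subset\overline{R_K}[x]$ is direct. For $y\in S(x)$ and any $0<r<\rho(x,y)/2$, I will apply the defining property of $S(x)$ to the canonical pair $U_x=K\cap D_r(x)$, $U_y=K\cap D_r(y)$: this yields infinitely many components of $K\setminus(D_r(x)\cup D_r(y))$ meeting both $\partial U_x$ and $\partial U_y$ (boundaries taken in $K$). Since the boundary in $K$ of $K\cap D_r(x)$ is contained in $K\cap\partial D_r(x)$, these components also meet the spherical circles $\partial D_r(x)$ and $\partial D_r(y)$, and Theorem \ref{closure} forces $(x,y)\in\overline{R_K}$.

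For the reverse inclusion I fix $y\in\overline{R_K}[x]$ and arbitrary disjoint open neighborhoods $U_x\ni x$, $U_y\ni y$ in $K$. I pick $r<\rho(x,y)/2$ small enough that $K\cap\overline{D_r(x)}\subset U_x$ and $K\cap\overline{D_r(y)}\subset U_y$, which is possible by the openness of $U_x,U_y$. Theorem \ref{closure} then supplies infinitely many components $P_n$ of $K\setminus(D_r(x)\cup D_r(y))$ each meeting both circles $\partial D_r(x)$ and $\partial D_r(y)$. Each $P_n$ is a continuum with non-empty intersection with both $U_x$ and $U_y$ (since its crossing points of the circles lie inside $U_x,U_y$ by the choice of $r$), hence $P_n\setminus(U_x\cup U_y)\ne\emptyset$. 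Mimicking the boundary-bumping argument that drives Lemma \ref{shrinking}, I locate a component $R_n$ of $P_n\setminus(U_x\cup U_y)$ that meets the $K$-boundaries of $U_x$ and $U_y$ simultaneously. Because $K\setminus(U_x\cup U_y)\subset K\setminus(D_r(x)\cup D_r(y))$, any connected subset of $K\setminus(U_x\cup U_y)$ that meets $P_n$ must actually be contained in $P_n$, so $R_n$ is a component of $K\setminus(U_x\cup U_y)$. Distinct $P_n$ yield distinct $R_n$, producing the infinitely many components demanded by $y\in S(x)$.

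The only real subtlety is the mismatch between the spherical boundary $\partial D_r(x)$ (a round circle in $\hat{\mathbb{C}}$) and the intrinsic boundary of $K\cap D_r(x)$ in the subspace topology: a point of $K$ lying on the circle $\partial D_r(x)$ need not be a limit of points of $K$ inside $D_r(x)$. This asymmetry happens to go the convenient way in the forward direction; in the backward direction it is neutralized by shrinking $r$ so aggressively that $K\cap\overline{D_r(x)}\subset U_x$, which absorbs all such ambiguous points into $U_x$ and lets the classical connectedness argument detect a genuine crossing of the corridor $K\setminus(U_x\cup U_y)$.
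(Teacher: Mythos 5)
Your proof is correct and follows essentially the same route as the paper: both directions reduce to the disk characterization of $\overline{R_K}$ in Theorem \ref{closure}, with the passage from round disks to arbitrary open neighborhoods handled by the boundary-bumping/shrinking argument of Lemma \ref{shrinking}. If anything, you are more explicit than the paper about the mismatch between the spherical circle $\partial D_r(x)$ and the relative boundary of $K\cap D_r(x)$ in $K$, and your choice of $r$ with $K\cap\overline{D_r(x)}\subset U_x$ correctly neutralizes it.
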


The following theorem relates the S-function on $K$ to that on a homeomorphic image of $K$.

\begin{theo}\label{S-function invariant}
If $h: K\rightarrow L\subset\hat{\mathbb{C}}$ is an embedding then $h(S(x))=S(h(x))$ for any $x\in K$.
\end{theo}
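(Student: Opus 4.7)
The proof rests on a single conceptual point: Definition \ref{S-function} refers only to the intrinsic topology of the compactum $K$. Every ingredient in the definition — the collection of open subsets $U\subset K$, the relative boundary $\partial U=\overline{U}\setminus U$ computed inside $K$, the components of the subspace $K\setminus(U_x\cup U_y)$, the operation of intersecting such a component with such a boundary, and the finiteness of the family of ``crossing'' components — is preserved by every homeomorphism. Thus $S(x)$ depends only on the topological type of the pointed compactum $(K,x)$, and in particular is insensitive to how $K$ sits inside $\hat{\mathbb{C}}$.

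The plan is then a direct verification. Fix $x\in K$ and $y\in K$, and write $x'=h(x)$, $y'=h(y)$. Since $h$ restricts to a homeomorphism $K\to L$, it carries open sets of $K$ bijectively onto open sets of $L$ with $h(\partial_K U)=\partial_L h(U)$, and it further restricts to a homeomorphism $K\setminus(U_x\cup U_y)\to L\setminus(h(U_x)\cup h(U_y))$. That restriction therefore induces a bijection on components, and a component $Q$ meets both $\partial_K U_x$ and $\partial_K U_y$ if and only if $h(Q)$ meets both $\partial_L h(U_x)$ and $\partial_L h(U_y)$. Consequently, any pair $(U_x,U_y)$ witnessing $y\notin S(x)$ is transported by $h$ to a pair witnessing $y'\notin S(x')$, and the same reasoning applied to $h^{-1}$ gives the converse. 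Combining the two directions yields $y\in S(x)\iff h(y)\in S(h(x))$, and hence $h(S(x))=S(h(x))$.

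There is no genuine obstacle in the argument: the substantive work has already been packaged into the intrinsic formulation of $S$ in Definition \ref{S-function}. What makes the theorem worth recording is its interaction with Theorem \ref{S-basic}. The fiber $\overline{R_K}[x]$ of the closure of the Sch\"onflies relation is defined via simple closed curves in $\hat{\mathbb{C}}$ and is therefore, a priori, sensitive to the embedding; but Theorem \ref{S-basic} identifies it with $S(x)$. Combined with the invariance just established, this promotes $\overline{R_K}[x]$ to an intrinsic invariant of the abstract compactum $K$, which is exactly what will be needed to deduce Theorem \ref{embedding}.
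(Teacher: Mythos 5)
Your proof is correct. The definition of $S$ in Definition \ref{S-function} is indeed purely intrinsic (open subsets of $K$, relative boundaries, components of $K\setminus(U_x\cup U_y)$), so a homeomorphism $K\to L$ transports witnesses for ``$y\notin S(x)$'' to witnesses for ``$h(y)\notin S(h(x))$'' and vice versa; note also that the disjointness of $U_x$ and $U_y$ forces $\partial U_x\cup\partial U_y\subset K\setminus(U_x\cup U_y)$, so the crossing condition is well posed and manifestly preserved.

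Your route is a genuine streamlining of the paper's. The paper proves the forward inclusion $h(S(x))\subset S(h(x))$ by fixing arbitrary disjoint open sets $V_x,V_y\subset L$, choosing open sets $U_x\subset h^{-1}(V_x)$ and $U_y\subset h^{-1}(V_y)$ that are possibly strictly smaller than the full preimages, pushing the infinitely many crossing components $P_n$ forward to components $h(P_n)$ of $L\setminus h(U_x\cup U_y)$, and then invoking the Boundary Bumping Theorem to extract from each $h(P_n)$ a component $Q_n$ of $L\setminus(V_x\cup V_y)$ meeting both $\partial V_x$ and $\partial V_y$ --- in effect re-running the argument of Lemma \ref{shrinking} inside $L$. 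You observe that since $h$ is a homeomorphism onto $L$ one may simply take $U_x=h^{-1}(V_x)$ and $U_y=h^{-1}(V_y)$, for which $h(\partial_K U_x)=\partial_L V_x$ and $h$ induces a bijection between the components of $K\setminus(U_x\cup U_y)$ and those of $L\setminus(V_x\cup V_y)$; this makes the Boundary Bumping step unnecessary and yields both inclusions at once by symmetry. The only cost is that your argument leans entirely on reading $\partial U$ as the boundary relative to $K$, which you state explicitly and which is the only reading consistent with the definition being given for an abstract compactum. Your closing remark about the interaction with Theorem \ref{S-basic} correctly identifies why the statement is worth recording.
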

\begin{proof}
We only need to verify the inclusion $h(y)\in S(h(x))$ for any $y\in K$ with $y\in S(x)$. To this end, we fix two disjoint open sets $V_x, V_y\subset L$ with $h(x)\in V_x$ and $h(y)\in V_y$. Choose two open sets $U_x, U_y\subset K$ with $x\in U_x\subset h^{-1}(V_x)$ and $y\in U_y\subset h^{-1}(V_y)$. Since $y\in S(x)$ the difference $K\setminus\left(U_x\cup U_y\right)$ has infinitely many components intersecting both $\partial U_x$ {and} $\partial U_y$, denoted as $\{P_n\}$.
Then $L\setminus h(U_x\cup U_y)$ has infinitely many components $\{h(P_n)\}$ intersecting $h(\partial U_x)$ {and} $h(\partial U_y)$ at the same time. By Boundary Bumping Theorem II \cite[Theorem 5.6, p74]{Nadler92} 
, every component of  $h(P_n)\setminus\left(V_x\cup V_y)\right)$ intersects either $\partial V_x$ or $\partial V_y$. Let $A_n$ be the union of $h(P_n)\cap V_x$ with all the components of  $h(P_n)\setminus\left(V_x\cup V_y)\right)$ intersecting $\partial V_x$  and $B_n$ be the union of $h(P_n)\cap V_y$ with all those that intersect $\partial V_y$. Then $h(P_n)=A_n\cup B_n$ and the connectedness of $h(P_n)$ implies that $h(P_n)\setminus\left(V_x\cup V_y\right)$ has at least one component, denoted as $Q_n$, that intersects both $\partial V_x$ and $\partial V_y$. Since every $Q_n$ is also a component of $L\setminus\left(V_x\cup V_y\right)$, the flexibility in choosing $V_x$ and $V_y$ indicates that $h(y)\in S(h(x))$.
\end{proof}

Now we have all the ingredients to prove Theorem \ref{embedding}.

\begin{proof}[\bf Proof for Theorem C]
Given a compactum $K\subset\hat{\mathbb{C}}$ and an arbitrary embedding $h: K\rightarrow\hat{\mathbb{C}}$. Theorem \ref{S-function invariant} implies that $h(y)\in S(h(x))$ for any $x,y\in K$ with $y\in S(x)$. Since  $y\in S(x)$ if and only if $(x,y)\in\overline{R_K}$ and since  $h(y)\in S(h(x))$ if and only if $(h(x),h(y))\in\overline{R_{h(K)}}$, the product map $h\times h(x,y)=(h(x),h(y))$ is a homeomorphism between $K\times K$ and $h(K)\times h(K)$ that sends $\overline{R_K}$ onto $\overline{R_{h(K)}}$. By \cite[Theorem 7]{LLY-2017}, the core decomposition $\Dc_K^{PS}$ is given by the smallest closed equivalence containing $\overline{R_K}$. Therefore $\left\{h(d): d\in\Dc_K^{PS}\right\}$ is a monotone decomposition and is refined by $\Dc_{h(K)}^{PS}$. By symmetry $\left\{h^{-1}(e): e\in\Dc_{h(K)}^{PS}\right\}$ is a monotone decomposition and is refined by $\Dc_{K}^{PS}$. Combining these we have  $\left\{h(d): d\in\Dc_K^{PS}\right\}=\Dc_{h(K)}^{PS}$. This ends our proof.
\end{proof}

\section{Fibers of $\overline{R_K}$ are connected}

This section discusses the fibers $\overline{R_K}[x]=\{z: (x,z)\in \overline{R_K}\}$ of $\overline{R_K}$ and proves Theorem \ref{connected-fiber}.

\begin{proof}[{\bf Proof for Theorem \ref{connected-fiber}}]
Suppose that two points $x$ and $y$ on $K$ are related under the relation $\overline{R_K}$, we will show that there exists a subcontinuum of $\overline{R_K}[x]$ which contains $x$ and $y$.

For all $n\ge1$ with $\frac{1}{n}\le\frac{|x-y|}{3}$, the compact set $K\setminus(B_{1/n}(x)\cup B_{1/n}(y))$ has infinitely many components with each intersecting both $\partial B_{1/n}(x)$ and $\partial B_{1/n}(y)$. Picking an infinite subsequence of those components that converge to a limit under Hausdorff distance. Denote this limit as $P_n$. By the definition of $R_K$, we can find two points $x_n\in \partial B_{1/n}(x)\cap P_n$ and $y_n\in \partial B_{1/n}(y)\cap P_n$ such that $(x_n,y_n)\in R_K\subset\overline{R_K}$. Clearly, we have $\lim_nx_n=x$ and $\lim_ny_n=y$.

By going to an appropriate subsequence, if necessary, we may assume that $\lim_nP_n=P_\infty$ under Hausdorff distance. The limit $P_\infty$ is a continuum, contains $\{x,y\}$ and is contained in $K$. To finish our proof, we just need to show that $(x,z)\in \overline{R_K}$ for any point $z\in P_\infty\setminus\{x,y\}$, which indicates that $P_\infty\subset \overline{R_K}[x]$.

To this end, we may choose a point $z_n\in P_n$ for all $n\ge1$ with $\frac{1}{n}\le\frac{|x-y|}{3}$ such that $\lim_nz_n=z$. Fix an integer $N>1$ with $\frac{1}{N}\le\frac{1}{2}\min\{|x-z|,|y-z|\}$. Then, for $n\ge N$ we have $B_{1/n}(x)\cap B_{1/n}(z)=B_{1/n}(y)\cap B_{1/n}(z)=\emptyset$. Moreover, for all $n\ge N$ we may find two open arcs $\alpha_n,\beta_n$ in the open annulus $\hat{\mathbb{C}}\setminus\overline{B_{1/n}(x)\cup B_{1/n}(y)}$ disjoint from $K$, each of which connects a point on $\partial B_{1/n}(x)$ to a point on $\partial B_{1/n}(y)$. See the following Figure \ref{conn-fiber}
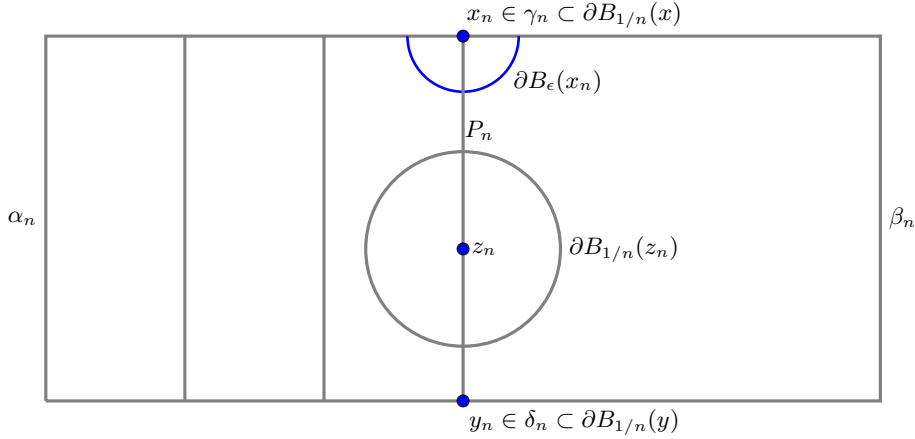
\begin{figure}[ht]
\centering
\vskip -0.25cm
\begin{tikzpicture}[scale=1.618]
\footnotesize
 \pgfmathsetmacro{\xone}{0}
 \pgfmathsetmacro{\xtwo}{16}
 \pgfmathsetmacro{\yone}{0}
 \pgfmathsetmacro{\ytwo}{3.5}

\draw[gray,very thick] (\xone,\yone) -- (\xone+3*\xtwo/7,\yone) --  (\xone+3*\xtwo/7,\yone+6*\ytwo/7) --
(\xone,\yone+6*\ytwo/7) -- (\xone,\yone);

\draw[gray,very thick] (1.5*\xtwo/7,2.5*\ytwo/7) circle (0.35*\xtwo/7);
\draw[line width=1pt,color=blue] (1.3*\xtwo/7,6*\ytwo/7) arc(180:360:0.2*\xtwo/7);

\draw[black] (\xone,3*\ytwo/7) node[anchor=east] {$\alpha_n$};
\draw[black] (3*\xtwo/7,3*\ytwo/7) node[anchor=west] {$\beta_n$};
\draw[black] (1.9*\xtwo/7,6*\ytwo/7) node[anchor=south] {$x_n\in\gamma_n\subset \partial B_{1/n}(x)$};

\draw[black] (1.9*\xtwo/7,0*\ytwo/7) node[anchor=north] {$y_n\in\delta_n\subset \partial B_{1/n}(y)$};

\foreach \p in {.4,.9,1.4}   \draw[gray,very thick] (0.1*\xtwo/7+\p*\xtwo/7,0*\ytwo/7) -- (0.1*\xtwo/7+\p*\xtwo/7,6*\ytwo/7);
\draw[black] (1.475*\xtwo/7,4.45*\ytwo/7) node[anchor=west] {$P_n$};
\draw[black] (1.85*\xtwo/7,2.5*\ytwo/7) node[anchor=west] {$\partial B_{1/n}(z_n)$};
\draw[black] (1.65*\xtwo/7,5.25*\ytwo/7) node[anchor=west] {$\partial B_{\epsilon}(x_n)$};

\draw[fill=blue] (1.5*\xtwo/7,6*\ytwo/7) circle (0.15em);
\draw[fill=blue] (1.5*\xtwo/7,0*\ytwo/7) circle (0.15em);
\draw[fill=blue] (1.5*\xtwo/7,2.5*\ytwo/7) circle (0.15em);
\draw[black] (1.5*\xtwo/7,2.5*\ytwo/7) node[anchor=west] {$z_n$};
\end{tikzpicture}
\vskip -0.75cm
\caption{Relative locations of $\alpha_n,\beta_n,\gamma_n,\delta_n$ and $P_n$.}\label{conn-fiber}
\end{figure}
for relative locations of $\alpha_n, \beta_n, x_n, y_n, z_n$ and $P_n$. Then  $\hat{\mathbb{C}}\setminus\overline{B_{1/n}(x)\cup B_{1/n}(y)}$ is divided by $\overline{\alpha_n\cup\beta_n}$ into two open (topological) disks, whose closures are closed (topological) disks. One of those closed disks, denoted as $D_n$, contains $P_\infty$ and hence all but finitely many of the continua $P_n$. And the boundary $\partial D_n$ equals $\alpha_n\cup\beta_n\cup\gamma_n\cup\delta_n$, where $\gamma_n=D_n\cap\overline{B_{1/n}(x)}$ and $\delta_n=D_n\cap\overline{B_{1/n}(y)}$. See Figure \ref{conn-fiber}. Clearly, we have $x_n\in\gamma_n$ and $y_n\in\delta_n$. Let $B_\epsilon(x_n)$ be an open disk centered at $x_n$ with small enough radius $\epsilon\in(0,1/n)$. Then $D_n\setminus B_\epsilon(x_n)$ is a closed (topological) disk, whose boundary is a simple closed curve, denoted as $J_n$. Let $U_n$ be the open annulus bounded by the two simple closed curves $J_n$ and $\partial B_{1/n}(z_n)$. Then $K\cap\overline{U_n}$ has infinitely many components each of which intersects $\partial B_{1/n}(z_n)$ and $\partial B_\epsilon(x_n)$ at the same time. Then we may find a point $x_n'\in\partial B_\epsilon(x_n)$ and a point $z_n'\in\partial B_{1/n}(z_n)$ with $(x_n',z_n')\in R_K$. Since each of $|x_n-x_n'|$ and $|z_n-z_n'|$ is smaller than $\frac{1}{n}$, we have $\lim_nx_n'=\lim_nx_n=x$ and $\lim_nz_n'=\lim_nz_n=z$. Consequently, we have $(x,z)\in\overline{R_K}$.
\end{proof}

\section{Fibers  of $\overline{R_K}$ have nice behaviour under rational functions}

In this section, $f$ is assumed to be a rational map with degree $d\ge2$. And Theorem \ref{invariant-fiber} consists of two parts: Theorems \ref{fiber-into} and \ref{fiber-onto}. In the following we recall a useful proposition from  \cite{Beardon-91}, which also appears as Lemma 5.7.2 in \cite[p.95]{Beardon91}.
\begin{lemma}\label{finite-monotone}
Let $T\subset\hat{\mathbb{C}}$ be a continuum. Then $f^{-1}(T)$ has at most $d$ components, and each is mapped by $f$ onto $T$.
\end{lemma}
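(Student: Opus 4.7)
The plan is to exploit three properties of the rational map $f$: continuity, openness (the open mapping theorem for non-constant holomorphic maps between Riemann surfaces), and the fact that every fibre $f^{-1}(y)$ has at most $d$ points. Compactness of $\hat{\mathbb{C}}$ also makes $f$ a closed map.

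First I would establish the surjectivity assertion: if $N$ is a component of $f^{-1}(T)$, then $f(N)=T$. Since $N$ is a continuum, $f(N)$ is a subcontinuum of $T$; I would show it is simultaneously closed and open in $T$, so connectedness of $T$ forces $f(N)=T$. Closedness is immediate from compactness. For openness, fix $y\in f(N)$ and $x\in N$ with $f(x)=y$. Using the local normal form $w\mapsto w^m$ of a non-constant holomorphic map, choose small disks $U\ni x$ and $V\ni y$ such that $f|_U\colon U\to V$ is a proper branched cover ramified only at $x$. The key observation is that $U\cap f^{-1}(T)$ is the preimage of $V\cap T$ under this branched cover; because the branch point sits exactly over $y\in V\cap T$, this preimage is connected. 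Hence $U\cap f^{-1}(T)\subset N$, and openness of $f$ yields an open neighbourhood $f(U)\cap T$ of $y$ in $T$ contained in $f(N)$.

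Once the onto part is proved, the bound on the number of components is short. The critical values of $f$ form a finite set, so I can pick a regular value $y_0\in T$ (the degenerate case $\#T=1$ makes the bound trivial). Then $f^{-1}(y_0)$ consists of exactly $d$ distinct points, and by the surjectivity just established every component of $f^{-1}(T)$ must contain at least one of them. Therefore $f^{-1}(T)$ has at most $d$ components.

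The main obstacle is the openness step. The subtle point is that a component of $f^{-1}(T)$ need not be open in $f^{-1}(T)$---infinitely many components can accumulate on $N$---so one cannot naively push forward an open neighbourhood of $N$. The remedy is the local branched-covering structure, which forces any accumulating components to lie outside the chosen $U$. A purely topological alternative would separate the finitely many components meeting $f^{-1}(y)$ by clopen subsets of $f^{-1}(T)$ and extend them to open sets of $\hat{\mathbb{C}}$ using normality; but then one still has to rule out the possibility that the pushed-forward neighbourhood picks up images from other components trapped inside the chosen clopen set, which is precisely the obstacle that the local normal form circumvents.
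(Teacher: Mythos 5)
The paper does not actually prove this lemma; it quotes it from Beardon \cite[p.95, Lemma 5.7.2]{Beardon91}, so your proposal must be judged on its own. The counting step and the closedness of $f(N)$ in $T$ are fine, but the openness step contains a genuine gap. You assert that $U\cap f^{-1}(T)=(f|_U)^{-1}(V\cap T)$ is connected ``because the branch point sits exactly over $y$''. This is false in general, for the simple reason that $V\cap T$ need not be connected: a continuum intersected with a small disk can have infinitely many components. Take $f(z)=z^2$, $T=\{|z|=1\}\cup\{(1+e^{-t})e^{{\bf i}t}:t\ge 0\}$ and $y=x=1$ (a regular point, so $m=1$ and $f|_U$ is a homeomorphism): for every small $V$ the set $V\cap T$ consists of one arc of the circle together with infinitely many pairwise disjoint arcs of the spiral, so $U\cap f^{-1}(T)$ is disconnected and the inclusion $U\cap f^{-1}(T)\subset N$ does not follow. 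The difficulty is exactly the one you flag yourself: a priori $f^{-1}(T)$ could have infinitely many components accumulating at $x$, and nothing in the local normal form prevents the extra local pieces of $U\cap f^{-1}(T)$ from belonging to global components other than $N$; their images lie in $V\cap T$ but are not known to lie in $f(N)$. So openness of $f(N)$ in $T$ is not established, and I do not see how any purely local argument could establish it.

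The ``purely topological alternative'' that you mention and then set aside is in fact the standard proof, and the obstacle you attribute to it is resolved globally rather than locally. Since $f^{-1}(T)$ is compact, its components coincide with its quasi-components, so $N$ is the intersection of all sets $E$ that are clopen in $f^{-1}(T)$ and contain $N$. For each such $E$ choose an open set $W\supset E$ in $\hat{\mathbb{C}}$ with $\overline{W}\cap f^{-1}(T)=E$, so that $f(\partial W)\cap T=\emptyset$. Then $T\cap f(W)=T\cap f(\overline{W})$ is simultaneously open and closed in $T$ and nonempty, hence equals $T$; consequently every $t\in T$ has a preimage in $W\cap f^{-1}(T)=E$, i.e. $f^{-1}(t)\cap E\ne\emptyset$. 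The family of compact sets $f^{-1}(t)\cap E$, indexed by the clopen $E\supset N$, is closed under finite intersection, so by compactness $f^{-1}(t)\cap N\ne\emptyset$, which gives $f(N)=T$. It is this directed-intersection argument, not the local branched-cover structure, that rules out interference from the other components. Your final counting step then goes through unchanged (any point of $T$, regular or not, works, since $\#f^{-1}(t)\le d$ always).
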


Given a compact $K\subset\hat{\mathbb{C}}$ and the Sch\"onflies relation $\overline{R_K}$. We will relate the fibers of $\overline{R_K}$ to those of $\overline{R_{f^{-1}(K)}}$. The same relation has been obtained when $K$ is assumed unshielded \cite{BCO11,BCO13}.

\begin{theo}\label{fiber-into}
Let $K\subset\hat{\mathbb{C}}$ be a compact set and $R_K$ the Sch\"onflies relation on $K$. Then  the containment $f\left(\overline{R_{f^{-1}(K)}}[x]\right)\subset\overline{R_{K}}[f(x)]$ holds for all $x\in f^{-1}(K)$.
\end{theo}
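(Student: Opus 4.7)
The plan is to use Theorem \ref{closure} to reduce the problem to the following: assuming $y\in\overline{R_{L}}[x]$ with $L=f^{-1}(K)$ and $f(x)\neq f(y)$ (the case $f(x)=f(y)$ is trivial), I must show that for every $s\in\bigl(0,\tfrac12\rho(f(x),f(y))\bigr)$ the set $K\setminus(D_s(f(x))\cup D_s(f(y)))$ has infinitely many components meeting both $\partial D_s(f(x))$ and $\partial D_s(f(y))$. Fix such an $s$ and, by continuity of $f$, choose $r\in\bigl(0,\tfrac12\rho(x,y)\bigr)$ small enough that $f(\overline{D_r(x)})\subset D_s(f(x))$ and $f(\overline{D_r(y)})\subset D_s(f(y))$. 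Theorem \ref{closure} applied to $(x,y)\in\overline{R_{L}}$ then produces infinitely many distinct components $\{P_n\}$ of $L\setminus(D_r(x)\cup D_r(y))$, each meeting $\partial D_r(x)$ and $\partial D_r(y)$.

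Pushing each $P_n$ forward, the continuum $f(P_n)\subset K$ contains a point of $D_s(f(x))$ and a point of $D_s(f(y))$, so the boundary-bumping argument used in the proof of Lemma \ref{key-step} yields a component $Q_n$ of $f(P_n)\setminus(D_s(f(x))\cup D_s(f(y)))$ meeting both $\partial D_s(f(x))$ and $\partial D_s(f(y))$. Let $R_n$ be the component of $K\setminus(D_s(f(x))\cup D_s(f(y)))$ containing $Q_n$; then $R_n$ meets both boundary circles. It remains to prove that infinitely many of the $R_n$ are distinct, which together with Theorem \ref{closure} will finish the proof.

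I would argue the distinctness by contradiction, invoking Beardon's lemma. Suppose only finitely many distinct $R_n$'s occur; by pigeonhole some continuum $R$ equals $R_n$ for infinitely many $n$. Lemma \ref{finite-monotone} supplies at most $\deg(f)$ components $N_1,\ldots,N_l$ of $f^{-1}(R)$. The choice of $r$ ensures $f^{-1}(R)\cap(D_r(x)\cup D_r(y))=\emptyset$, since $R$ is disjoint from $D_s(f(x))\cup D_s(f(y))$ while $f(\overline{D_r(x)})\cup f(\overline{D_r(y)})\subset D_s(f(x))\cup D_s(f(y))$. Hence each $N_i$, being connected, sits inside a single component of $L\setminus(D_r(x)\cup D_r(y))$. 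For each surviving $n$ choose $z_n\in P_n$ with $f(z_n)\in Q_n\subset R$; then $z_n\in N_{i(n)}$ for some $i(n)\leq l$, and a second pigeonhole yields a fixed $N_i$ containing $z_n$ for infinitely many $n$. But any $P_n$ meeting $N_i$ must equal the unique component of $L\setminus(D_r(x)\cup D_r(y))$ containing $N_i$, so at most one $n$ can satisfy $z_n\in N_i\cap P_n$, a contradiction.

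The main obstacle I anticipate is precisely the last step: reconciling the infinite supply of disjoint $P_n$'s with the finite component bound on $f^{-1}(R)$ coming from Beardon's lemma. The bridge between them is the careful nested choice of $r$ versus $s$, which forces $f^{-1}(R)$ entirely outside $D_r(x)\cup D_r(y)$ and localizes each $N_i$ inside one component of $L\setminus(D_r(x)\cup D_r(y))$; this is what ultimately converts the degree bound on $f$ into the desired contradiction.
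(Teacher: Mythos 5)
Your argument is correct and follows essentially the same route as the paper: reduce to the criterion of Theorem \ref{closure}, push the components $P_n$ forward, extract the $Q_n$ by boundary bumping, and use Beardon's degree bound (Lemma \ref{finite-monotone}) together with a pigeonhole on the $\le\deg(f)$ components of $f^{-1}(R)$ to show that no single component downstairs can absorb infinitely many of the $Q_n$. The only structural difference is that you apply Theorem \ref{closure} directly to $(x,y)\in\overline{R_{L}}$, whereas the paper first approximates by pairs $(x_n,y_n)\in R_{L}$ located at regular points of $f$ and then passes to the limit; your streamlining is harmless. One small caveat: the case $f(x)=f(y)$ is not quite ``trivial,'' since it asks for the diagonal point $(f(x),f(x))$ to lie in $\overline{R_K}$, which Theorem \ref{closure} (stated for distinct points) does not address --- but the paper's own proof glosses over the same point, and it is immaterial for the way the theorem is used in Corollary \ref{forward-invariance}.
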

\begin{proof} Fix $x,y\in f^{-1}(K)$ with $(x,y)\in\overline{R_{f^{-1}(K)}}$. Given an arbitrary number $r>0$, by Theorem \ref{closure} we may find  two infinite sequences $\{x_n\},\{y_n\}\subset f^{-1}(K)$ with $|x_n-x|=|y_n-y|=\frac{r}{n}$ such that $(x_n,y_n)\in R_{f^{-1}(K)}$.
Since $f$ has finitely many critical points, an appropriate choice of $r>0$ will guarantee that all those points $x_n,y_n$ are regular. Therefore, we only need to verify that $(f(x_n),f(y_n))\in\overline{R_{K}}$ for each $n\ge1$.
That is to say, we need to show that for any number $\delta\in\left(0,\frac{1}{2}\left|f\left(x_n\right)-f\left(y_n\right)\right|\right)$ the compact set $X_n:=K\setminus(B_\delta(f(x_n))\cup B_\delta(f(y_n)))$ has infinitely many components intersecting the two circles  $\partial B_\delta(f(x_n)$ and $\partial B_\delta(f(y_n)$ at the same time.

Since $x_n$ and $y_n$ are regular points of the rational map $f$, we may fix a positive number $\epsilon<\frac{1}{2}|x_n-y_n|$ such that (1) $|f(u)-f(v)|<\delta$ for any $u,v\in\hat{\mathbb{C}}$ with $|u-v|<\epsilon$ and that (2) the following two maps are each a homeomorphism: \begin{equation}\label{restriction-1}
f|_{B_\epsilon(x_n)}:B_\epsilon(x_n)\rightarrow f(B_\epsilon(x_n)),\qquad\qquad f|_{B_\epsilon(y_n)}:B_\epsilon(x_n)\rightarrow f(B_\epsilon(y_n))
\end{equation}
Using Theorem \ref{closure} again, we see that $f^{-1}(K)\setminus(B_\epsilon(x_n)\cup B_\epsilon(y_n))$ has infinitely many components $\{P_i\}$ each of which intersects both $\partial B_\epsilon(x_n)$ and $\partial B_\epsilon(y_n)$. Thus each $f(P_i)$ is a continuum and intersects both $f(\partial B_\epsilon(x_n))\subset B_\delta(f(x_n))$ and $f(\partial B_\epsilon(y_n))\subset B_\delta(f(y_n))$. By the well known Boundary Bumping Theorem II~\cite[Theorem 5.6, p74]{Nadler92} 
, every component of $f(P_i)\setminus\left(B_\delta(f(x_n))\bigcup B_\delta(f(y_n))\right)$ intersects either $\partial B_\delta(f(x_n)$ or $\partial B_\delta(f(y_n)$. Combining this fact with the connectedness of $f(P_i)$, we may deduce that $f(P_i)\setminus\left(B_\delta(f(x_n))\bigcup B_\delta(f(y_n))\right)$ has a component $Q_i$ which intersects both $\partial B_\delta(f(x_n))$ and $\partial B_\delta(f(y_n))$.

Clearly, every $Q_i$ is contained in a component of
$X_n
$. Our proof will be completed if only we can show that no component of $X_n$ contains more than $d$ of the above $Q_i$. If on the contrary $Q_{i(1)}, \ldots, Q_{i(d+1)}$ were contained in a single component $T$ of $X_n$, then by Lemma \ref{finite-monotone} we see that the pre-image $f^{-1}(T)\subset \left[f^{-1}(K)\setminus\left(B_\epsilon(x)\cup B_\epsilon(y)\right)\right]$ has $\le d$ components one of which intersects at least two of the pre-images $f^{-1}(Q_{i(j)})$, say $f^{-1}(Q_{i(1)})$ and $f^{-1}(Q_{i(2)})$. Denote that component of $f^{-1}(T)$ by $M$. Then the two components $P_{i(1)}, P_{i(2)}$ of $f^{-1}(K)\setminus(B_\epsilon(x_n)\cup B_\epsilon(y_n))$  are contained in $M$, a connected subset of $f^{-1}(K)\setminus\left(B_\epsilon(x)\cup B_\epsilon(y)\right)$ which is necessarily contained in a single component of $f^{-1}(K)\setminus(B_\epsilon(x_n)\cup B_\epsilon(y_n))$. This is absurd.
\end{proof}

The following corollary is based on Theorem \ref{fiber-into}.

\begin{coro}\label{forward-invariance}
Let $K\subset\hat{\mathbb{C}}$ be compact. Then the core decomposition $\Dc_{f^{-1}(K)}^{PS}$ refines $f^{-1}\left(\Dc_K^{PS}\right)$. Equivalently, each element of $\Dc_{f^{-1}(K)}^{PS}$ is sent by $f$ into an element of $\Dc_{K}^{PS}$.
\end{coro}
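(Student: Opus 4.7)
The plan is to pull back the Sch\"onflies equivalence from $K$ to $L:=f^{-1}(K)$ through $f$, and then combine the minimality characterization of $\sim$ with Theorem \ref{fiber-into}. Recall from \cite[Theorem 7]{LLY-2017} that $\Dc_K^{PS}$ is the partition into equivalence classes of $\sim_K$, the smallest closed equivalence on $K$ containing $R_K$ (and hence containing $\overline{R_K}$); similarly, $\Dc_L^{PS}$ corresponds to the analogous minimal closed equivalence $\sim_L$ on $L$.

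I would define a relation $\equiv$ on $L$ by declaring $u\equiv v$ if and only if $f(u)\sim_K f(v)$. Since the restriction $f|_L:L\to K$ is continuous, the product map $f\times f:L\times L\to K\times K$ is continuous; as $\sim_K$ is a closed subset of $K\times K$, its preimage $(f\times f)^{-1}(\sim_K)$ is closed in $L\times L$. Thus $\equiv$ is a closed equivalence relation on $L$.

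Next I would apply Theorem \ref{fiber-into} to obtain $f\left(\overline{R_L}[u]\right)\subset\overline{R_K}[f(u)]$ for every $u\in L$. Combined with $\overline{R_K}\subset\,\sim_K$, this says precisely that $\overline{R_L}\subset\,\equiv$ as subsets of $L\times L$. Since $\equiv$ is a closed equivalence and $\sim_L$ is, by definition, the \emph{smallest} such one containing $\overline{R_L}$, we conclude $\sim_L\,\subset\,\equiv$. In other words, whenever $u$ and $v$ lie in a common element $X\in\Dc_L^{PS}$, their images $f(u)$ and $f(v)$ lie in a common element $d\in\Dc_K^{PS}$, so $f(X)\subset d$.

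Finally, every element $X\in\Dc_L^{PS}$ is a continuum, hence connected; so $X\subset f^{-1}(d)$ must lie in a single component $N$ of $f^{-1}(d)$, and $N$ is by definition an element of $f^{-1}\!\left(\Dc_K^{PS}\right)$. This is exactly the statement that $\Dc_{L}^{PS}$ refines $f^{-1}\!\left(\Dc_K^{PS}\right)$, and it is equivalent to $f(X)\subset d\in\Dc_K^{PS}$ for each $X\in\Dc_L^{PS}$. The substantive input is entirely encapsulated in Theorem \ref{fiber-into}; all remaining work consists of a formal manipulation with closed equivalences and the minimality clause defining $\sim_L$, so I do not anticipate any genuine obstacle.
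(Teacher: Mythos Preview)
Your proof is correct and takes essentially the same approach as the paper: both use Theorem~\ref{fiber-into} to see that $\overline{R_L}$ is contained in the pullback of $\sim_K$ through $f$, then invoke the minimality of $\sim_L$ among closed equivalences containing $\overline{R_L}$. The only cosmetic difference is that the paper works directly with the monotone decomposition $f^{-1}\!\left(\Dc_K^{PS}\right)$ (components of preimages), whereas you first pass through the coarser equivalence $\equiv$ (full preimages) and use connectedness of the classes of $\Dc_L^{PS}$ at the end; your version has the minor advantage of making the closedness of the pulled-back equivalence immediate from continuity of $f\times f$.
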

\begin{proof}
By Theorem \ref{fiber-into}, every fiber of $\overline{R_{f^{-1}(K)}}$ is sent into a fiber of $\overline{R_K}$, thus is contained in a single element of $f^{-1}\left(\Dc_K^{PS}\right)$, a monotone decomposition of $f^{-1}(K)$. Since the equivalence on $f^{-1}(K)$ corresponding to $\Dc_{f^{-1}(K)}^{PS}$ is the smallest closed equivalence containing $\overline{R_{f^{-1}(K)}}$, we see that $\Dc_{f^{-1}(K)}^{PS}$ refines $f^{-1}\left(\Dc_K^{PS}\right)$.
\end{proof}

\begin{theo}\label{fiber-onto}
Let $K\subset\hat{\mathbb{C}}$ be a compact set and $R_K$ the Sch\"onflies relation on $K$. Then  the equality $f\left(\overline{R_{f^{-1}(K)}}[x]\right)\supset\overline{R_{K}}[f(x)]$ holds for all $x\in f^{-1}(K)$.
\end{theo}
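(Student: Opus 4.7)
The plan is to lift a Sch\"onflies-type witness for $(w,z)\in\overline{R_K}$ (with $w=f(x)$) to one for some pair $(x,y)\in\overline{R_{f^{-1}(K)}}$ having $f(y)=z$, by applying Theorem \ref{good-R} to $f^{-1}(K)$ on the preimage region of $\hat{\mathbb{C}}\setminus(D_r(w)\cup D_r(z))$. Write $f^{-1}(w)=\{a_1=x,\ldots,a_k\}$ and $f^{-1}(z)=\{b_1,\ldots,b_l\}$; the case $w=z$ is trivial, so assume $w\ne z$. For every sufficiently small $r>0$, $f^{-1}(D_r(w))$ splits into pairwise disjoint open topological disks $U_1^r,\ldots,U_k^r$ around the $a_j$ with $x\in U_1^r$, and similarly $f^{-1}(D_r(z))$ splits into $V_1^r,\ldots,V_l^r$ around the $b_i$. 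Setting $W^r:=\hat{\mathbb{C}}\setminus(\bigcup_j\overline{U_j^r}\cup\bigcup_i\overline{V_i^r})$, this $W^r$ is an open region bounded by the Jordan curves $\partial U_j^r,\partial V_i^r$, and $\overline{W^r}\cap f^{-1}(K)=f^{-1}(K\setminus(D_r(w)\cup D_r(z)))$.

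By Theorem \ref{closure} there are infinitely many components $T_1^r,T_2^r,\ldots$ of $K\setminus(D_r(w)\cup D_r(z))$ each meeting both $\partial D_r(w)$ and $\partial D_r(z)$. For each $T_m^r$, Lemma \ref{finite-monotone} gives at most $\deg f$ preimage components in $\overline{W^r}\cap f^{-1}(K)$, each mapped by $f$ onto $T_m^r$. I pick a point $p_m^r\in T_m^r\cap\partial D_r(w)$ together with one of its preimages on $\partial U_1^r$ (such a preimage always exists because the local degree of $f$ at $x$ is positive); the component $C_m^r$ of $\overline{W^r}\cap f^{-1}(K)$ containing it meets $\partial U_1^r$, and the surjectivity of $f|_{C_m^r}\colon C_m^r\to T_m^r$ forces $C_m^r$ to also meet some $\partial V_{l(m)}^r$. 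A first pigeonhole over the finitely many possible indices stabilizes $l(m)\equiv l_0(r)$ along a subsequence of the $C_m^r$. Passing further to a Hausdorff-convergent subsequence $C_m^r\to P_\infty^r$ and extracting limit points of the touch points on $\partial U_1^r$ and $\partial V_{l_0(r)}^r$ produces $u^r\in\partial U_1^r\cap P_\infty^r$ and $v^r\in\partial V_{l_0(r)}^r\cap P_\infty^r$; Theorem \ref{good-R} applied to $f^{-1}(K)$ with the region $W^r$ and its boundary curves (selecting $\partial U_1^r,\partial V_{l_0(r)}^r$ as the distinguished pair) then yields $(u^r,v^r)\in R_{f^{-1}(K)}$.

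Finally, a second pigeonhole over a sequence $r_n\to 0$ stabilizes $l_0(r_n)\equiv l_0$, producing a single $y:=b_{l_0}$ with $f(y)=z$. Since $\overline{U_1^{r_n}}$ and $\overline{V_{l_0}^{r_n}}$ shrink in Hausdorff distance to the singletons $\{x\}$ and $\{y\}$, the witnesses satisfy $u^{r_n}\to x$ and $v^{r_n}\to y$; combined with $(u^{r_n},v^{r_n})\in R_{f^{-1}(K)}$, this gives $(x,y)\in\overline{R_{f^{-1}(K)}}$, and hence $z\in f(\overline{R_{f^{-1}(K)}}[x])$, as desired. The main obstacle is the double pigeonhole---stabilizing the index $l(m)$ first over $m$ at each fixed $r$ and then over $r\to 0$---which rests on the finiteness of $f^{-1}(z)$ from Lemma \ref{finite-monotone}. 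A minor technical subtlety, namely the possible presence of critical values of $f$ on the circles $\partial D_r(w),\partial D_r(z)$, is handled by the local-degree accounting at $x$ and at the $b_i$: each such point still admits a preimage on $\partial U_1^r$ and on every $\partial V_i^r$, so the construction of the $C_m^r$ proceeds unchanged.
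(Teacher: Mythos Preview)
Your proof is correct and follows essentially the same route as the paper: lift the infinite family of components of $K\setminus(D_r(w)\cup D_r(z))$ through Beardon's Lemma~\ref{finite-monotone}, pigeonhole to force infinitely many of the lifted components to meet a fixed pair of boundary curves, and invoke Theorem~\ref{good-R}. The only differences are cosmetic: the paper first reduces to the case where $f(x)$ and $z$ are regular values (so that the $U_j^r,V_i^r$ are mapped homeomorphically), whereas you handle possible branching directly via local degree; and the paper replaces your explicit second pigeonhole over $r_n\to 0$ by a compactness extraction of a limit point $s\in f^{-1}(z)$, which amounts to the same thing.
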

\begin{proof}
We need to show that $f^{-1}(w)\bigcap\overline{R_{f^{-1}K}}[x]\ne\emptyset$ for any $w\in \overline{R_K}[f(x)]$. Using the same argument as those in the proof for Theorem \ref{fiber-into} (see first paragraph), we only need to consider the case that neither $f(x)$ nor $w$ is a critical value.

Given a positive number $\epsilon<\frac{|f(x)-w|}{3}$ such that the restrictions of $f$ to all the component of $f^{-1}\left(\overline{B_\epsilon(f(x))}\right)\bigcup f^{-1}\left(\overline{B_\epsilon(w)}\right)$ are each a homeomorphism. Let $U_1,\ldots, U_d$ be the components of $f^{-1}\left(B_\epsilon(f(x))\right)$. Let $V_1,\ldots, V_d$ be the components of $f^{-1}\left(B_\epsilon(w)\right)$. Then each of the closures $\overline{U_1},\ldots, \overline{U_d},\  \overline{V_1},\ldots, \overline{V_d}$ is a closed topological disk, and those disks are pairwise disjoint.
We may assume that $x\in U_1$.

As $w\in \overline{R_K}[f(x)]$, the compact set $K\setminus(B_\epsilon(f(x))\cup B_\epsilon(w))$ has infinitely many components $\{P_n\}$ that intersect both $\partial B_\epsilon(f(x))$ and $\partial B_\epsilon(w)$. By Lemma \ref{finite-monotone}, every pre-image $f^{-1}(P_n)$ has $\le d$ components, each of which is a component of $X\setminus\left(U_1\cup\cdots\cup U_d\cup V_1\cup\cdots\cup V_d\right)$ that contains a point on some $\partial U_i$ and a point on some $\partial V_j$. In particular, one of those components, denoted as $Q_n$, intersects both $\partial U_1$ and $\partial V_j$. Then there exists one $V_j$, say $V_1$, such that infinitely many of $\{Q_n\}$ intersect both $\partial U_1$ and $\partial V_1$.

Now, we may apply Theorem \ref{good-R} (in Section 2)
and infer that $(x_\epsilon,s_\epsilon)\in R_{f^{-1}(K)}$ for some $x_\epsilon\in\partial U_1$ and some $s_\epsilon\in\partial V_1$. Clearly, we have $\lim\limits_{\epsilon\rightarrow0}x_\epsilon=x$. Moreover, we may choose an appropriate sequence $\epsilon(k)\rightarrow0$ such that $\lim\limits_{k\rightarrow\infty}s_{\epsilon(k)}=s$ for some point $s\in f^{-1}(K)$. Since  $\left(x_{\epsilon(k)},s_{\epsilon(k)}\right)\in R_{f^{-1}(K)}$ for all $k\ge1$, we immediately have $(x,s)\in \overline{R_{f^{-1}(K)}}$.
\end{proof}

\section{The coincidence of two equivalences}\label{coincidence}

This section proves  Theorem \ref{final}, in which we always assume that $K\subset\hat{\mathbb{C}}$ is a compactum and $f$ a rational map with degree $d\ge2$. Let $\sim$ be the smallest closed equivalence containing the Sch\"onflies relation $R_K$. By  \cite[Theorem 7]{LLY-2017}, the core decomposition $\Dc_K^{PS}$ is formed by the classes $[x]_\sim=\{z\in K: x\sim z\}$. Similarly, the core decomposition $\Dc_L^{PS}$ of $L=f^{-1}(K)$  is formed by the classes $[x]_\asymp=\{z\in f^{-1}(K): x\sim z\}$, where $\asymp$ is the smallest closed equivalence containing $R_L$, the Sch\"onflies relation defined on $L$.

Recall that the equivalence $\approx$ on $K$ is defined by requiring that $x\approx y$ if and only if 
$\pi_L\left(f^{-1}(x)\right)=\pi_L\left(f^{-1}(y)\right)$. Since $\pi_L(z)=[z]_\asymp$ for all $z\in L$, the following is immediate:
\begin{equation}\label{new-eq}
\forall\ x,y\in K,\quad x\approx y \ \Leftrightarrow \bigcup_{f(u)=x}[u]_\asymp=\bigcup_{f(v)=y}[v]_\asymp.
\end{equation}
Moreover, for any $u\in L$ it is routine to check that $[f(u)]_\approx$ is a subset of $f\left([u]_\asymp\right)$. By  Corollary \ref{forward-invariance}, we have $f\left([u]_\asymp\right)\subset [f(u)]_\sim$, from which follows the containment $[f(u)]_\approx\subset [f(u)]_\sim$.

The rest of this section continues to obtain $[f(u)]_\sim\subset [f(u)]_\approx$, which proves Theorem \ref{final}. Since $\sim$ is the smallest closed equivalence containing $R_K$, we only need to show the following.

\begin{theo}\label{comparing}
The equivalence $\approx$ on $K$ is closed and contains $\overline{R_K}$.
\end{theo}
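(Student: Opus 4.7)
The plan is to treat the two assertions separately but in concert: first prove the inclusion $\overline{R_K}\subset\ \approx$, then prove closedness of $\approx$ as a subset of $K\times K$. Both arguments rest on the same two ingredients already in place, namely Theorem \ref{invariant-fiber}, which turns $f$ into a bijection between fibers $\overline{R_L}[u]$ and fibers $\overline{R_K}[f(u)]$ (where $L=f^{-1}(K)$), and the fact that $\asymp$ — being by definition the smallest closed equivalence containing $R_L$ — is closed and contains $\overline{R_L}$. In each of the two verifications, the key move is that membership $x\approx y$ is witnessed pointwise: it suffices to show that every $\asymp$-class meeting $f^{-1}(x)$ also meets $f^{-1}(y)$, and vice versa.

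For the inclusion $\overline{R_K}\subset\ \approx$, I fix $(x,y)\in\overline{R_K}$ and an arbitrary $u\in f^{-1}(x)$. Theorem \ref{invariant-fiber} gives $f(\overline{R_L}[u])=\overline{R_K}[x]$, which contains $y$, so there exists $v\in\overline{R_L}[u]$ with $f(v)=y$. Since $\asymp$ is a closed equivalence containing $\overline{R_L}$, this forces $u\asymp v$, hence $\pi_L(u)=\pi_L(v)\in\pi_L(f^{-1}(y))$. Thus $\pi_L(f^{-1}(x))\subset\pi_L(f^{-1}(y))$; the symmetric argument, starting from an arbitrary $v\in f^{-1}(y)$ and using $f(\overline{R_L}[v])=\overline{R_K}[y]\ni x$, gives the reverse inclusion, so $x\approx y$.

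For closedness, suppose $x_n\to x$ and $y_n\to y$ with $x_n\approx y_n$ for all $n$. Given $u\in f^{-1}(x)$, I use openness of the rational map $f$ on $\hat{\mathbb{C}}$ to lift: for a shrinking neighborhood basis $\{U_k\}$ of $u$, each image $f(U_k)$ is a neighborhood of $x$, so for large enough $n$ one has $x_n\in f(U_k)$ and may choose $u_n\in U_k\cap f^{-1}(x_n)$; a diagonal choice yields $u_n\in f^{-1}(x_n)$ with $u_n\to u$. The hypothesis $x_n\approx y_n$ then furnishes $v_n\in f^{-1}(y_n)$ with $u_n\asymp v_n$. Compactness of $\hat{\mathbb{C}}$ provides a subsequence $v_{n_k}\to v$, where $f(v)=\lim f(v_{n_k})=y$; closedness of $\asymp$ yields $u\asymp v$, so $\pi_L(u)\in\pi_L(f^{-1}(y))$. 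Running the symmetric lifting for points of $f^{-1}(y)$ completes the proof that $x\approx y$.

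The main obstacle I anticipate is the lifting step in the second part: one must ensure that, for a prescribed $u\in f^{-1}(x)$, an approximating preimage sequence $u_n\in f^{-1}(x_n)$ can be chosen close to $u$, not merely close to some other point of $f^{-1}(x)$. This is exactly where openness of $f$ is indispensable, and it is also the only place the proof uses a property of $f$ beyond continuity; everything else is either a direct appeal to Theorem \ref{invariant-fiber} or a routine use of the closedness of $\asymp$.
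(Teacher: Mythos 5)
Your proposal is correct and follows essentially the same route as the paper: the containment $\overline{R_K}\subset\,\approx$ is obtained from Theorem \ref{fiber-onto} (the surjective half of Theorem \ref{invariant-fiber}) together with the fact that $\asymp$ contains $\overline{R_{L}}$, and closedness is obtained by lifting the approximating sequence through the open map $f$ near a prescribed preimage $u$, passing to a convergent subsequence of the partners $v_n$, and invoking closedness of $\asymp$. The only cosmetic difference is the order in which the two assertions are treated.
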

\begin{proof}
Suppose that $x_n\approx y_n$ and $(x_n,y_n)\rightarrow(x,y)$, we need to verify that $x\approx y$. By definition of $\approx$, for any $n\ge1$ we have
\begin{equation}\label{eq-sequence}
\bigcup_{f(u)=x_n}[u]_\asymp=\bigcup_{f(v)=y_n}[v]_\asymp.
\end{equation}
Given $s\in f^{-1}(x)$. Since $f$ is a rational map, for any $k\ge1$ we can check that $f(B(s,1/k))$ is an open neighborhood of $x$  hence contains all but finitely many $x_n$. Let $u_{n(k)}\in f^{-1}(x_{n(k)})$ be a point satisfying $\left|u_{n(k)}-s\right|<\frac{1}{k}$. Clearly, we have $\lim\limits_{k\rightarrow\infty}u_{n(k)}=s$. By Equation \ref{eq-sequence}, we can find $v_{n(k)}\in f^{-1}\left(y_{n(k)}\right)$ with $u_{n(k)}\asymp v_{n(k)}$. Going to an appropriate subsequence, if necessary, we may assume that $\lim\limits_{k\rightarrow\infty}v_{n(k)}=t$. Then $f(t)=\lim\limits_{k\rightarrow\infty}f\left(v_{n(k)}\right)=y$. Clearly, we have $s\asymp t$ and \[\displaystyle [s]_\asymp\subset \left(\bigcup_{v\in f^{-1}(y)}[v]_\asymp\right).
\]
Repeat the same argument for any given $t\in f^{-1}(y)$, we can show that
$[t]_\asymp$ is a subset of $\bigcup_{u\in f^{-1}(x)}[u]_\asymp$.
Therefore, we have shown $x\approx y$. We will continue to show that  $\approx$ contains $\overline{R_K}$.

Given $x\in K$ and $y\in\overline{R_K}[x]$. By Theorem \ref{fiber-onto}, the equality $f\left(\overline{R_L}[u]\right)=\overline{R_{K}}[x]$ holds  for any $u\in f^{-1}(x)$. Thus we may fix a point $v\in \overline{R_L}[u]$ with  $f(v)=y$. As $[u]_\asymp=[v]_\asymp$, we have verified that $\bigcup_{u\in f^{-1}(x)}[u]_\asymp$ is a subset of
$\bigcup_{v\in f^{-1}(y)}[v]_\asymp$. Since the inverse containment may be verified by the same argument, we already have $x\approx y$.
\end{proof}

\section{Can we say something about the Mandelbrot set ?}\label{M}

This section proves Theorem \ref{E}. And we start from some standard notions of complex dynamics.

Given a number $c\in \mathbb{C}$ the filled Julia set $K_c$ of $f_c(z)=z^2+c$ consists of the all the points $z\in\mathbb{C}$ whose orbit $\left\{f_c^k(z): k\right\}$ is bounded. The Mandelbrot set $\M$ consists of the parameters $c$ such that $K_c$ is connected. It is well known that there is a conformal isomorphism $\Phi$ of $\hat{\mathbb{C}}\setminus\M$ onto $\hat{\mathbb{C}}\setminus\overline{\mathbb{D}}=\{z\in\hat{\mathbb{C}}: |z|>1\}$, fixing $\infty$ and having real derivative at $\infty$ \cite{DouadyHubbard82}. We will refer to \cite{CG-93} and \cite{Hubbard93} for the basic notions and for the fundamental properties of $\M$:
\begin{itemize}
\item For any $t\in[0,1)$ the pre-image of $\left\{re^{2\pi{\bf i}t}: r>1\right\}$ under $\Phi$ is called the {\em external ray of $\M$ at external angle $t$} and is denoted as $\Rc^\M_t$. If $\lim_{r\rightarrow1}\Phi^{-1}\left(re^{2\pi{\bf i}t}\right)=c$ then $c\in\partial\M$. In such a case, we say that $\Rc^\M_t$ lands at $c$.

\item For each $\lambda$, $|\lambda|\le1$, there is a unique $c=c(\lambda)$ such that $f_c(z)$ has a fixed point with multiplier $\lambda$. The parameters $c$ for which $f_c$ has an attracting fixed point form a cardioid $H_0\subset\M$, and $\partial H_0\subset\partial\M$ \cite[p.126, Theorem 1.3]{CG-93}. This set $H_0$ is called the major cardioid.

\item A parameter $c\in\M$ is hyperbolic if $f_c(z)$ has an attractive cycle. In particular, if $f_c$ has an attracting cycle of period $m$ then $c$ lies in the interior $\M^o$; if $H$ is the component of $\M^o$ containing $c$ then $f_{\eta}$ has an attracting cycle $z_1(\eta),\ldots,z_m(\eta)$ for all $\eta\in H$, where each $z_j(\eta)$ depends analytically on $\eta$ \cite[p.127, Theorem 1.4]{CG-93}. A component of $\M^o$ containing a hyperbolic parameter is called a hyperbolic component.

\item Every $c\in\partial\M$ can be approximated by hyperbolic parameters \cite[p.129, Theorem 1.5]{CG-93}.

\item If $H$ is a hyperbolic component then the multiplier map $\phi_H$ sending $c\in H$ to the multiplier  of the attracting cycle of $f_c$ is a conformal isomorphism of $H$ onto the unit disk $\mathbb{D}=\{z: |z|<1\}$. It extends continuously to $\partial H$ and maps $\overline{H}$ homeomorphically onto the closed disk $\overline{\mathbb{D}}$ \cite[p.134, Theorem 2.1]{CG-93}. The extended map is also denoted as $\phi_H$ and the two points $\phi_H^{-1}(0), \phi_H^{-1}(1)$ are respectively called the {\em center} and the {\em root} of $H$.

\item Given a hyperbolic component $H$ and $t=p/q\in[0,1)$ with  relatively prime integers $q>p$, there are external rays $\Rc^\M_{t_-}$ and     $\Rc^\M_{t_+}$ landing at $c_{H,t}=\phi_{H}^{-1}(e^{2\pi{\bf i}t})$ \cite[p.154, Theorem 7.2]{CG-93}. There is one exception, when $H=H_0$ and $t=0$, then $c_{H,0}=\frac{1}{4}$ and there is exactly one external ray $\Rc^\M_{0}$ landing at $c_{H,0}$. The union $\Gamma_{H,t}=\{c_{H,t}\}\cup\{\infty\}\cup\Rc_{t_-}^\M\cup\Rc_{t_+}^\M$ is a simple closed curve and the component of $\hat{\mathbb{C}}\setminus\Gamma_{H,t}$ not containing $0$ is denoted as $W_{H,p/q}$; moreover, $L_{H,p/q}=\M\cap\overline{W_{H,p/q}}$ is called the {\em $p/q$-limb of $H$} \cite[p.477]{Hubbard93}.
    If $H$ is the major cardioid we put $W_{H,0}=\hat{\mathbb{C}}\setminus\{r\in\mathbb{R}: r\ge\frac{1}{4}\}$.
\end{itemize}
The following lemma is from \cite[p.277,Proposition 4.2]{Hubbard93} and plays a crucial role in this section.
\begin{lemma}\label{Key-Fact}
Every point of $\M\cap W_{H,0}$ is either in $\overline{H}$ or in one of the limbs $L_{H,p/q}$. There exists a function $\eta_H: \mathbb{N}\rightarrow\mathbb{R}$ with $\eta_H(q)\rightarrow 0$ as $q\rightarrow\infty$, such that the diameter ${\rm diam} L_{H,p/q}\le\eta_H(q)$.
\end{lemma}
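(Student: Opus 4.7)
The plan is to treat the two assertions separately, using the external ray pairs $\Gamma_{H,p/q}=\{c_{H,p/q}\}\cup\{\infty\}\cup\Rc_{(p/q)_-}^\M\cup\Rc_{(p/q)_+}^\M$ as the basic cutting tool. For the structural first half, I would take an arbitrary $c\in\M\cap W_{H,0}\setminus\overline{H}$, fix some $c_0$ in $H$, and choose a simple arc $\gamma\subset W_{H,0}$ from $c$ to $c_0$. Because $\phi_H$ is a homeomorphism of $\overline{H}$ onto $\overline{\bbD}$, the only points of $\partial\overline{H}$ accessible from outside $\overline{H}$ within $W_{H,0}$ are the parabolic parameters $\{c_{H,p/q}:p/q\in\bbQ\cap(0,1)\}$, which are dense in $\partial H\cap W_{H,0}$. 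So the first point where $\gamma$ leaves $\overline{H}\cup\bigcup_{p/q}\overline{W_{H,p/q}}$, if it existed, would have to lie in a ``gap'' of $\partial\overline{H}$ between parabolic parameters; but the two rays $\Rc_{(p/q)_\pm}^\M$ bounding the wakes accumulate onto every such boundary arc, forcing the gap to be empty. Consequently $c$ lies in some $\overline{W_{H,p/q}}$, and since $c\in\M$ this gives $c\in L_{H,p/q}$.

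For the shrinking estimate I would use the fact that the widths of the wakes, measured in external angle, tend to zero uniformly in $p$ as $q\to\infty$: the landing angles satisfy $(p/q)_+-(p/q)_-\to 0$, with an explicit bound coming from the combinatorics of angle doubling at the parabolic parameter $c_{H,p/q}$. By continuity of $\Phi^{-1}$ on approach to $\partial\M$ along rational angles, both bounding rays of $W_{H,p/q}$ together with the landing point $c_{H,p/q}$ lie in a small neighbourhood of a single boundary point of $\overline{H}$. Taking $\eta_H(q)$ to be $\sup\{\mathrm{diam}\,L_{H,p/q'}:q'\geq q,\ \gcd(p,q')=1\}$ and using compactness to bound the finitely many limbs with $q'$ in any bounded range, the uniform angular-width estimate forces $\eta_H(q)\to 0$.

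The main obstacle, which is exactly why the authors cite Hubbard rather than reprove this, is the quantitative part. The structural dichotomy in the first paragraph can be made rigorous from relatively clean facts about wakes, the landing of rational rays, and the Douady--Hubbard parametrization of hyperbolic components; I expect this to be a short argument. The diameter bound, however, requires genuine analytic input on $\M$: it rests on uniform continuity of prime-end impressions along $\partial\M\cap W_{H,0}$ and on control of how the ray pairs closing up around $c_{H,p/q}$ contract geometrically with $q$. These are classical but nontrivial results from parabolic implosion and Yoccoz-type combinatorics, so a self-contained proof would essentially reconstruct \cite[Prop.\ 4.2]{Hubbard93}; in a plan like this one I would simply isolate the angular-width estimate as a black box and reduce everything else to it.
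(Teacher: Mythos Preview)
The paper does not prove this lemma: it is stated verbatim as a citation of \cite[p.~277, Proposition~4.2]{Hubbard93}, with no argument given. You evidently noticed this yourself, since your final paragraph says ``which is exactly why the authors cite Hubbard rather than reprove this.'' So there is no in-paper proof to compare your proposal against; both you and the authors defer to Hubbard for the substance, and in that sense you are aligned.

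That said, a brief remark on your sketch of the structural half. The arc-and-accessibility argument is not quite sound as written. The claim that the only points of $\partial H$ accessible from outside $\overline{H}$ within $W_{H,0}$ are the parabolic parameters $c_{H,p/q}$ presupposes something close to local connectivity of $\M$ along $\partial H$, which is part of what the lemma (together with its diameter estimate) is used to establish. The standard route to the dichotomy is not via accessibility but via external angles: one shows that the collection of wake intervals $[(p/q)_-,(p/q)_+]$ together with the angles landing on $\overline{H}$ exhausts the circle, so any $c\in\M\cap W_{H,0}$ whose external angles avoid all wake intervals must already lie in $\overline{H}$. Your instinct to isolate the angular-width bound as the analytic black box is correct, but the first half also leans on ray combinatorics rather than on a path argument in parameter space.
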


Besides the standard notions recalled as above, the proof for Theorem \ref{E} is also benefited from two things:
(1) the techniques used by Hubbard in \cite[Corollary 4.4 and 4.5]{Hubbard93}, and
(2) a newly found approach \cite[Theorem 2.3]{LY-2017} to construct from below the core decomposition $\Dc_K^{PS}$ of a full continuum $K\subset\hat{\mathbb{C}}$. Actually, the result of \cite[Theorem 2.3]{LY-2017} provides a very simple aspect about the structure of the elements of $\Dc_K^{PS}$. We cite it here as a lemma.

\begin{lemma}\label{LY}
Given a full continuum $K\subset\hat{\mathbb{C}}$ and $d\in\Dc_K^{PS}$. If $x\ne y\in \partial K$ lie in $d$ there exist countably many prime end impressions of $\hat{\mathbb{C}}\setminus K$ whose union is connected and contains $\{x,y\}$.
\end{lemma}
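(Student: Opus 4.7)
The plan is to pull the Sch\"onflies structure back to the unit disk via a Riemann map on the complement $U=\hat{\mathbb{C}}\setminus K$ and use the theory of prime end impressions. Since $K$ is a full continuum, $U$ is simply connected; a Riemann map $\phi:\bbD^{*}\to U$ assigns to each angle $\theta\in\partial\bbD$ a prime end whose impression $\mathrm{Imp}(\theta)\subset\partial K$ is a continuum, and the assignment $\theta\mapsto\mathrm{Imp}(\theta)$ is upper semi-continuous, so $\{(\theta,z):z\in\mathrm{Imp}(\theta)\}$ is closed in $\partial\bbD\times K$. I would split the argument along the construction $R_K\subset\overline{R_K}\subset\sim_K$: first prove the chain-of-impressions statement for $R_K$-related pairs, then lift it to $\overline{R_K}$ via Theorem \ref{closure}, and finally push it through the formation of the smallest closed equivalence using separability of $d$.

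For the base case $(x,y)\in R_K$ with $x,y\in\partial K$, Definition \ref{R_K} gives an annulus $U(J_1,J_2)$ and a sequence of components $P_n\subset K\cap\overline{U(J_1,J_2)}$ joining $J_1$ to $J_2$ with Hausdorff limit $P_\infty\supset\{x,y\}$. The open subdomains of $U$ trapped between consecutive $P_n$'s in the annulus accumulate on $P_\infty$; their preimages under $\phi$ are crosscuts of $\bbD^{*}$ whose endpoints cluster on a closed subarc $A\subset\partial\bbD$ with $P_\infty\subset\bigcup_{\theta\in A}\mathrm{Imp}(\theta)$. I would then pick a countable dense subset $\{\theta_k\}\subset A$ containing angles $\theta_x,\theta_y$ with $x\in\mathrm{Imp}(\theta_x)$ and $y\in\mathrm{Imp}(\theta_y)$; upper semi-continuity of $\mathrm{Imp}(\cdot)$ together with density of $\{\theta_k\}$ in $A$ forces the closure of $\bigcup_k\mathrm{Imp}(\theta_k)$ to contain the continuum $P_\infty$, and a routine point-swapping step (adding at most countably many more impressions to absorb the closure) upgrades this to a countable connected family containing $\{x,y\}$.

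For $(x,y)\in\overline{R_K}$, Theorem \ref{closure} supplies, for each $r<\rho(x,y)/2$, infinitely many components of $K\setminus(D_r(x)\cup D_r(y))$ joining $\partial D_r(x)$ to $\partial D_r(y)$, and running the base-case construction with $r=1/n$ followed by a diagonal extraction yields the required countable connected union. To extend from $\overline{R_K}$ to $\sim_K$, use that $d\in\Dc_K^{PS}$ is a continuum (by Proposition 5.1 of \cite{LLY-2017}) and pick a countable dense sequence $\{z_i\}\subset d\cap\partial K$ with $z_0=x$ and some $z_{i_0}=y$. Build nested countable connected families $\Fc_0\subset\Fc_1\subset\cdots$ of impressions with $\{z_0,\ldots,z_i\}\subset\bigcup\Fc_i$, at each stage chaining countably many applications of the $\overline{R_K}$-case to adjoin the next $z_i$ to $\Fc_{i-1}$; the union $\Fc_\infty=\bigcup_i\Fc_i$ remains a countable family whose union is connected and contains $\{x,y\}$.

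The main obstacle is verifying that the passage from $\overline{R_K}$ to $\sim_K$ really can be carried out in countably many moves, since the smallest closed equivalence containing $\overline{R_K}$ is a priori obtained by alternating transitive closure with topological closure, possibly transfinitely. I would handle this by showing that for any $z_i$ and $z_{i+1}$ lying in a common element of $\Dc_K^{PS}$ there is a countable $\overline{R_K}$-chain from $z_i$ to $z_{i+1}$, extracted via a diagonal argument on a countable base for the topology of $K$ together with upper semi-continuity of $\mathrm{Imp}(\cdot)$. Once this countability is secured, the induction runs cleanly: each step adjoins at most countably many impressions, consecutive families share at least one common impression, and the ultimate family $\Fc_\infty$ stays countable and connected, delivering the conclusion.
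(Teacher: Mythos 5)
First, note that the paper does not actually prove this lemma: it is imported verbatim from \cite[Theorem 2.3]{LY-2017} ("a newly found approach to construct from below the core decomposition"), so there is no internal proof to compare against; your attempt must stand on its own. It does not, for two reasons. The first is in your base case. Upper semi-continuity of $\theta\mapsto{\rm Imp}(\theta)$ gives $\overline{\bigcup_k{\rm Imp}(\theta_k)}\subset\bigcup_{\theta\in A}{\rm Imp}(\theta)$ for $\{\theta_k\}$ dense in the closed set $A$ --- exactly the opposite of the containment you invoke. To conclude that the closure of a countable union of impressions contains $P_\infty$ you would need lower semi-continuity of the impression map, which fails in general. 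One can repair the density issue by choosing the countable angle set adaptively (using density of accessible points and the fact that each angle has at most one radial limit), but even then the resulting union is only \emph{close to} $P_\infty$, not connected: a countable union of pairwise disjoint continua accumulating on another continuum is typically disconnected (compare $[0,1]\cup\{1+\tfrac1n:n\ge1\}$). Connectedness of a countable union of impressions essentially forces a chain structure in which each impression meets the union of its predecessors, and your "point-swapping step" produces no such chain; absorbing the closure would in general require uncountably many further impressions.

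The second and decisive gap is the one you flag yourself: the passage from $\overline{R_K}$ to the smallest closed equivalence $\sim_K$. Two points in a common class of $\sim_K$ need not be joined by any countable chain of $\overline{R_K}$-related (or impression-sharing) pairs, because the topological-closure steps in the transfinite generation of $\sim_K$ identify points without supplying chains between them; your proposed remedy --- "show that any two points of a common element of $\Dc_K^{PS}$ are joined by a countable $\overline{R_K}$-chain, extracted via a diagonal argument on a countable base" --- is not an argument but a restatement of (a strengthening of) the lemma itself, and is precisely where all the work lies. A workable strategy runs in the opposite direction: build from countable chains of impressions an explicit monotone decomposition lying in $\Mc^{PS}(K)$, and then use that $\Dc_K^{PS}$ is the \emph{finest} member of $\Mc^{PS}(K)$ to conclude that every $d\in\Dc_K^{PS}$ is contained in one of its elements. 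That "from below" construction is the content of the cited \cite[Theorem 2.3]{LY-2017}, and it is the ingredient your proposal is missing.
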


The following lemma relies on Lemma \ref{Key-Fact} and is helpful when we prove Theorem \ref{E}.
\begin{lemma}\label{simple-impression}
Let $x$ be a point on the boundary of a hyperbolic component $H$ of $\M^o$. Let $Imp(\theta)$ be a prime end impression of $\mathbb{C}\setminus \M$, for some $\theta\in[0,1)$, such that $x\in Imp(\theta)$. Then $Imp(\theta)=\{x\}$, except when $H$ is primitive and $x$ is the root of $H$.
\end{lemma}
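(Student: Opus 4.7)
The plan is a squeeze argument: for each $\epsilon>0$ I will build a Jordan domain $\Omega\subset\hat{\mathbb{C}}$ with $x\in\partial\Omega$ such that $\mathrm{diam}(\overline{\Omega}\cap\M)<\epsilon$ and $Imp(\theta)\subseteq\overline{\Omega}\cap\M$; letting $\epsilon\to 0$ then collapses $Imp(\theta)$ to $\{x\}$.

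Write $\phi_H(x)=e^{2\pi{\bf i}s}$ with $s\in[0,1)$. First I choose rationals $p/q$ and $p'/q'$ lying on opposite sides of $s$ in $\mathbb{R}/\mathbb{Z}$ with (i) both $c_{H,p/q}$ and $c_{H,p'/q'}$ inside $D_\epsilon(x)$, possible by continuity of $\phi_H^{-1}$ on $\overline{\mathbb{D}}$; and (ii) every rational $p''/q''$ strictly between $p/q$ and $p'/q'$ in the short arc through $s$ satisfies $\eta_H(q'')<\epsilon$. For (ii) I take $p/q$ and $p'/q'$ to be best one-sided approximants of $s$, so that by the Farey mediant property every such $p''/q''$ has $q''\ge q+q'$, which can be made arbitrarily large. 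This construction succeeds whenever $s$ is irrational or $s\in\mathbb{Q}\setminus\{0\}$. In the borderline case $s=0$ (so $x$ is the root of $H$), if $H$ is a satellite of some parent component $H'$ attached at angle $a/b\in\partial H'$, the ``missing'' approximant is instead taken on $\partial H'$ near $a/b$, using $\phi_{H'}$ and $\eta_{H'}$ in place of $\phi_H$ and $\eta_H$. Only when $s=0$ and $H$ is primitive does no such choice exist; this is precisely the single exception allowed by the lemma.

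With the rationals selected, the external rays $\Rc^\M_{(p/q)_+}$ and $\Rc^\M_{(p'/q')_-}$ (the two rays adjacent to the arc through $x$), together with $\infty$ and the arc $\gamma\subset\partial H$ from $c_{H,p/q}$ to $c_{H,p'/q'}$ passing through $x$, bound a Jordan domain $\Omega\subset\hat{\mathbb{C}}$ with $x\in\gamma\subset\partial\Omega$. By Lemma~\ref{Key-Fact},
\[\overline{\Omega}\cap\M=\gamma\cup\bigcup L_{H,p''/q''},\]
where the union runs over rationals $p''/q''$ in the short open arc through $s$; each summand lies in a ball of radius $O(\epsilon)$ about $x$ by (i)--(ii), giving $\mathrm{diam}(\overline{\Omega}\cap\M)=O(\epsilon)$. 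Since $x\in Imp(\theta)$, I may take $z_k\in\hat{\mathbb{C}}\setminus\M$ with $z_k\to x$ and $\Phi(z_k)\to e^{2\pi{\bf i}\theta}$. Because $x\in\gamma$ and $\Omega$ is open, $z_k\in\Omega$ for all large $k$, so $\arg\Phi(z_k)/(2\pi)\in((p/q)_+,(p'/q')_-)$, forcing $\theta$ to lie in the closure of that arc. Refining the choice of approximants (at the start of the argument) to place $\theta$ strictly inside $((p/q)_+,(p'/q')_-)$, the same localization applies to any defining sequence $\Phi^{-1}(w_k)\to y$ for $y\in Imp(\theta)$: eventually $w_k$ has argument in $((p/q)_+,(p'/q')_-)$ and $\Phi^{-1}(w_k)\in\Omega\setminus\M$, so $y\in\overline{\Omega}\cap\M$. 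Thus $Imp(\theta)\subseteq\overline{\Omega}\cap\M$, and letting $\epsilon\to 0$ gives $Imp(\theta)=\{x\}$.

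The main obstacle is handling rational $s$, where $x$ is itself a landing point and the squeeze must be carried out separately on the main side and the limb side of $x$. On the main side the above argument applies verbatim. On the limb side one replaces $H$ by the satellite hyperbolic component $H_*$ attached to $\overline{H}$ at $x$, for which $x$ is the root, and reapplies the construction; this recursion terminates at every non-primitive root, while at a primitive root the squeeze has no available second approximant, matching the lemma's exception.
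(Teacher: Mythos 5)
Your argument is, in substance, the paper's own: squeeze $Imp(\theta)$ inside arbitrarily small sets of the form (small arc of $\partial H$ through $x$) together with the limbs attached to that arc, invoke Lemma \ref{Key-Fact} to make those limbs uniformly small, and at a satellite root perform the squeeze on both sides of $x$ (parent side and satellite side), the primitive root being exactly the case where one side is unavailable. The paper carves out its small sets as $\phi_H^{-1}(V_1)$ using chords of $\mathbb{D}$ rather than bounding them by external rays, and it simply cites Lemma \ref{Key-Fact} for the shrinking diameters where you insert an explicit Farey-mediant count; these differences are cosmetic.

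Two points need repair. First, the sentence claiming the two-sided approximant choice ``succeeds whenever $s$ is irrational or $s\in\mathbb{Q}\setminus\{0\}$'' is false for rational $s$: if $p/q<s<p'/q'$ are Farey neighbours then $s$ itself is a rational strictly between them, forcing its denominator to be at least $q+q'$, so $q+q'$ stays bounded; and in any case the full limb $L_{H,s}$, a nondegenerate continuum of fixed diameter, always lies in $\overline{\Omega}\cap\M$, so the region can never have diameter $O(\epsilon)$. Your closing paragraph's split into the main side and the wake of the satellite attached at $x$ is the correct fix (and is precisely the paper's $N_1$/$N_2$ device), so the offending sentence should simply be removed. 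Second, ``$z_k\to x\in\partial\Omega$ and $\Omega$ open, hence $z_k\in\Omega$ eventually'' is a non sequitur: a sequence converging to a boundary point need not enter the domain. What is actually needed is that $D\setminus\M\subset\Omega$ for some small disk $D\ni x$ (equivalently, that no angle outside the closed arc $[(p/q)_+,(p'/q')_-]$, respectively outside the wake on the limb side, can have $x$ in its impression); only then does $x\in Imp(\theta)$ force $\theta$ into the arc controlled by $\Omega$. The paper is equally terse at the corresponding step, asserting $Imp(\theta)\subset N_1$ with only a pointer to Hubbard, so this is a shared gap rather than a defect peculiar to your write-up, but it is the one place where a careful reader will want more.
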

\begin{proof}
Recall that ${\rm Imp}(\theta)$ consists of all the points $y$ such that there exist a sequence of points $z_n$ with $|z_n|>1$ and $z_n\rightarrow e^{2\pi{\bf i}\theta}$ which satisfy $\lim\limits_{n\rightarrow\infty}\Phi^{-1}(z_n)=y$. Since $x\in\partial H$, we may fix $t\in[0,1)$ with $x=\phi_H^{-1}\left(e^{2\pi{\bf i}t}\right)$ and rational numbers $r_1<t<r_2$ with $r_2-r_1$ very small. When $t=0$ we choose $0<r_2<r_1<1$ such that $r_2$ and $1-r_1$ are both very small. Now, we follow the ideas in \cite[Corollary 4.4 and 4.5]{Hubbard93} and consider the region $V_1\subset\mathbb{D}$ that is cut off by the chord between $e^{2\pi{\bf i}r_1}$ and $e^{2\pi{\bf i}r_2}$. Let $N_1$ be the union of $\phi_H^{-1}(V_1)$ and all the limbs $L_{H,p/q}$ that intersects $\phi_H^{-1}(V_1)$.

If $t$ is irrational then $Imp(\theta)$ lies in $N_1$. By Lemma \ref{Key-Fact}, the diameter of $N_1$ tends to zero as $|r_2-r_1|$ tends to zero; Therefore, we have $Imp(\theta)=\{x\}$. If $x$ is the root of the major cardioid $H_0$, the same argument verifies $Imp(\theta)=\{x\}$.

If $t$ is rational and if $x$ is not the root of $H$ when $H$ is primitive, then $x$ lies on the boundary of another hyperbolic component, say $U$. Then $\{x\}=\overline{H}\cap\overline{U}$. We may rename $H$ and $U$, if necessary, and assume that $t=0$ and that $x=\phi_U^{-1}\left(e^{2\pi{\bf i}s}\right)$ for a rational number $s\in(0,1)$. Under this setting, we have $H\subset L_{U,s}$. Now we may fix two rational numbers $r_3<s<r_4$ with $r_4-r_3$ very small and consider the region $V_2\subset\mathbb{D}$ that is cut off by the chord between   $e^{2\pi{\bf i}r_3}$ and $e^{2\pi{\bf i}r_4}$. If $N_2$ is the union of $\phi_U^{-1}(V_2)$ with all the limbs $L_{U,p/q}$ except for $L_{U,s}$ that intersect $\phi_U^{-1}(V_2)$, then the impression $Imp(\theta)$ is contained in $N_2$. Again by Lemma \ref{Key-Fact}, the diameter of $N_2$ tends to zero as $\max\left\{|r_1-r_2|,|r_3-r_4|\right\}\rightarrow 0$. This indicates that $Imp(\theta)=\{x\}$.
\end{proof}

\begin{rema}
In the above Lemma \ref{simple-impression} if $H$ is the major cardioid and $x=\frac14$ then $\theta=0$ and $Imp(0)=\{x\}$. Therefore, in Theorem \ref{E} we can conclude that $\frac14\in \lambda_\M^{-1}(0)$. The proof for this is just similar to that given in the following arguments.
\end{rema}

\begin{proof}[\bf Proof for Theorem E]
Given a hyperbolic component $H$ and a point $x$ on $\partial H$, which is not the root of $H$ when $H$ is primitive. Assume that $x=\phi_H^{-1}\left(e^{2\pi{\bf i}t}\right)$, where $\phi_H: H\rightarrow\mathbb{D}$ is the multiplier map. We will verify  that the element $D(x)$ of $\Dc_\M^{PS}$ containing $x$ is equal to $\{x\}$.

To this end, we apply Lemma \ref{LY} to $D(x)$ and obtain for any $y\in D(x)$ a countable union  $N_y=\bigcup_k{\rm Imp}(\theta_k)$ of prime end impressions, which is a connected set containing $\{x,y\}$.
Applying Lemma \ref{simple-impression} to $Imp(\theta_k)$, we see that every $Imp(\theta_k)$ consists of a single point $y_k\in\partial H$ whenever $Imp(\theta_k)\cap\overline{H}\ne\emptyset$. From this we see that $N_y\cap\overline{H}$ is a countable set. Since a countable set is not connected, we only need to conclude that $Imp(\theta_k)\cap\overline{H}\ne\emptyset$ for all $k\ge1$.

Assume on the contrary that some impression $Imp(\theta_k)$ lies entirely in a limb $L_{H,r}$ and does not intersect $\overline{H}$. Then there are two possibilities: either $x\notin L_{H,r}$ or $x\in L_{H,r}$.

In the former case we may apply Lemma \ref{Key-Fact} to $H$ and fix two points $x_1,x_2\in(\partial H\setminus N_y)$ that are off every limb $L_{H,p/q}$, such that $\{x_1,x_2\}$ separates $x$ from $\phi_H^{-1}\left(e^{2\pi{\bf i}r}\right)$ on $\partial H$. Let $\alpha,\beta$ denote the two components of $\partial H\setminus\{x_1,x_2\}$ with $x\in\alpha$ and $\phi_H^{-1}\left(e^{2\pi{\bf i}r}\right)\in\beta$. Let $A$ be the union of $\alpha$ with all the limbs $L_{H,p/q}$ intersecting $\alpha$. Let $B$ be the union of $\beta$ with all the limbs $L_{H,p/q}$ that intersect $\beta$. The second part of Lemma \ref{Key-Fact} implies that $\overline{A}\cap\overline{B}=\{x_1,x_2\}$. That is to say, the two sets $A$ and $B$ are separated. This contradicts the connectedness of $N_y$, which intersects $A$ and $B$ both.

In the latter case, we Apply Lemma \ref{Key-Fact} to $L_{H,r}$ and find two points $x_1,x_2\in(\partial L_{H,r}\setminus N_y)$ that are off every limb  of $L_{H,r}$, such that $\{x_1,x_2\}$ separates $x$ from $\phi_{L_{H,r}}^{-1}\left(e^{2\pi{\bf i}0}\right)$ on $\partial L_{H,r}$. We may repeat the argument in the previous paragraph and obtain a contradiction accordingly.
\end{proof}

\section{Examples and Remarks Related to Theorem \ref{invariance}}\label{ex}

We firstly give three examples and a remark concerning FitzGerald-Swingle's core decomposition of continua with semmi-locally connected hyperspace.

\begin{exam}\label{no-CD-ex}
We will construct a continuum $K\subset\mathbb{R}^3$
of which the Peano model does not exist. Let $\mathcal{C}\subset[0,1]$ be the Cantor ternary set. Let $K$ be the union of three compact sets: $\mathcal{C}\times[0,1]^2$, $[0,1]\times\{0\}\times[0,1]$, and $[0,1]^2\times\{0\}$. The following Figure \ref{no-CD} gives a simplified depiction. The left part represents $\mathcal{C}\times[0,1]^2$ and the right part the union of $[0,1]\times\{0\}\times[0,1]$ with $[0,1]^2\times\{0\}$.
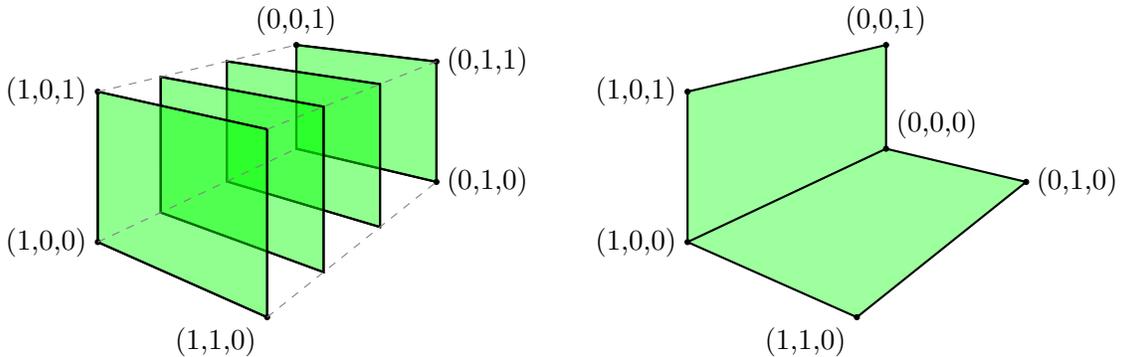
\begin{figure}[ht]
\vskip-0.0cm

\begin{center}
\begin{tikzpicture}[x=1cm,y=1cm,scale=1.25]

	\coordinate (P1) at (-9cm,2cm); 
	\coordinate (P2) at (5cm,2cm); 

	\coordinate (A1) at (0cm,0cm); 
	\coordinate (A2) at (0cm,-2cm); 

	\coordinate (A3) at ($(P1)!.8!(A2)$); 
	\coordinate (A4) at ($(P1)!.8!(A1)$);

	\coordinate (A6) at ($(P2)!.64!(A2)$);
	\coordinate (A5) at ($(P2)!.64!(A1)$);

	\coordinate (A8) at
	  (intersection cs: first line={(A5) -- (P1)},
			    second line={(A4) -- (P2)});
	\coordinate (A7) at
	  (intersection cs: first line={(A6) -- (P1)},
			    second line={(A3) -- (P2)});


%


	\foreach \i in {0,1,2,3}  	\foreach \j in {0,2}
	{
	\coordinate (B2) at ($(P2)!0.64+0.12*\i!(A2)$);
	\coordinate (B1) at ($(P2)!0.64+0.12*\i!(A1)$);

	\coordinate (B4) at
	  (intersection cs: first line={(B1) -- (P1)},
			    second line={(A4) -- (P2)});
	\coordinate (B3) at
	  (intersection cs: first line={(B2) -- (P1)},
			    second line={(A3) -- (P2)});
\fill[green,opacity=0.25] (B1) -- (B2) -- (B3) -- (B4) -- cycle; 
\draw[black,thick] (B1) -- (B2) -- (B3) -- (B4) -- cycle;
	}



\draw[fill=black] (A4) circle (0.07em)  node[left] {(1,0,1)};
\draw[fill=black] (A3) circle (0.07em)  node[left] {(1,0,0)};
\draw[fill=black] (A2) circle (0.07em)  node[below left] {(1,1,0)};
\draw[fill=black] (A6) circle (0.07em)  node[right] {(0,1,0)};
\draw[fill=black] (A5) circle (0.07em)  node[right] {(0,1,1)};
\draw[fill=black] (A8) circle (0.07em)  node[above] {(0,0,1)};
\draw[gray,dashed] (A1) -- (A5);
\draw[gray,dashed] (A2) -- (A6);
\draw[gray,dashed] (A4) -- (A8);
\draw[gray,dashed] (A3) -- (A7);
\end{tikzpicture}
\hskip 0.5cm
\begin{tikzpicture}[x=1cm,y=1cm,scale=1.25]

	\coordinate (P1) at (-9cm,2cm); 
	\coordinate (P2) at (5cm,2cm); 

	\coordinate (A1) at (0cm,0cm); 
	\coordinate (A2) at (0cm,-2cm); 

	\coordinate (A3) at ($(P1)!.8!(A2)$); 
	\coordinate (A4) at ($(P1)!.8!(A1)$);

	\coordinate (A6) at ($(P2)!.64!(A2)$);
	\coordinate (A5) at ($(P2)!.64!(A1)$);

	\coordinate (A8) at
	  (intersection cs: first line={(A5) -- (P1)},
			    second line={(A4) -- (P2)});
	\coordinate (A7) at
	  (intersection cs: first line={(A6) -- (P1)},
			    second line={(A3) -- (P2)});


	\fill[green!38.2] (A2) -- (A3) -- (A7) -- (A6) -- cycle; 
	\fill[green!38.2] (A3) -- (A4) -- (A8) -- (A7) -- cycle; 


\draw[fill=black] (A4) circle (0.07em)  node[left] {(1,0,1)};
\draw[fill=black] (A3) circle (0.07em)  node[left] {(1,0,0)};
\draw[fill=black] (A2) circle (0.07em)  node[below left] {(1,1,0)};
\draw[fill=black] (A6) circle (0.07em)  node[right] {(0,1,0)};
\draw[fill=black] (A8) circle (0.07em)  node[above] {(0,0,1)};
\draw[fill=black] (A7) circle (0.07em)  node[above right] {(0,0,0)};
 \draw[thick,black] (A3) -- (A7);
\draw[thick,black] (A2) -- (A6);
\draw[thick,black] (A4) -- (A8);
\draw[thick,black] (A2) -- (A3);
\draw[thick,black] (A6) -- (A7); \draw[thick,black] (A3) -- (A4); \draw[thick,black] (A7) -- (A8);
\end{tikzpicture}
\end{center}
\vskip -1.0cm
\caption{A continuum in $\mathbb{R}^3$ which has no Peano model.}\label{no-CD}
\vskip -0.25cm
\end{figure}
It is routine to check that $K$ is a continuum and is semi-locally connected everywhere. Consider the projections $\pi_1(x_1,x_2,x_3)=(x_1,x_2)$ and $\pi_2(x_1,x_2,x_3)=(x_1,x_3)$ from $K$ onto $[0,1]^2$.
One may verify that the two collections of pre-images $\Dc_i=\{\pi_i^{-1}(x): x\in[0,1]^2\}$ are each a monotone decomposition of $K$, whose elements are either single points or segments. Moreover, the only decomposition of $K$ finer than both $\Dc_1$ and $\Dc_2$ is the decomposition into single points. Therefore, $K$ does not have a Peano model, since it is not locally connected.
\end{exam}

\begin{exam}\label{invariance-destroyed} Let $\mathcal{C}$ be Cantor's ternary set and $X$ Sierpinski's carpet. Let $K$ be the image of $X$ under the map $\displaystyle (t,r)\mapsto r^2e^{2\pi{\bf i}t}$.
Let $Y$ be the union of $X$ and the product $[1,2]\times\mathcal{C}$. Let $L$ be the image of $X\cup Y$ under the map $(s,r)\mapsto re^{\pi{\bf i}s}$. Then $L\subset\{z: |z|\le1/2\}$ and $K\subset\{z: |z|\le1/\sqrt{2}\}$ satisfy $f(L)=K$ and $L\nsubseteq f^{-1}(K)$, where $f(z)=z^2$; moreover, every element of $\Dc_K^{PS}$ is a singleton while $\Dc_L^{PS}$ has uncountably many elements each of which is a non-degenerate Jordan arc. Those arcs are mapped to circle centered at the origin. See the following Figure \ref{variant-CD}.
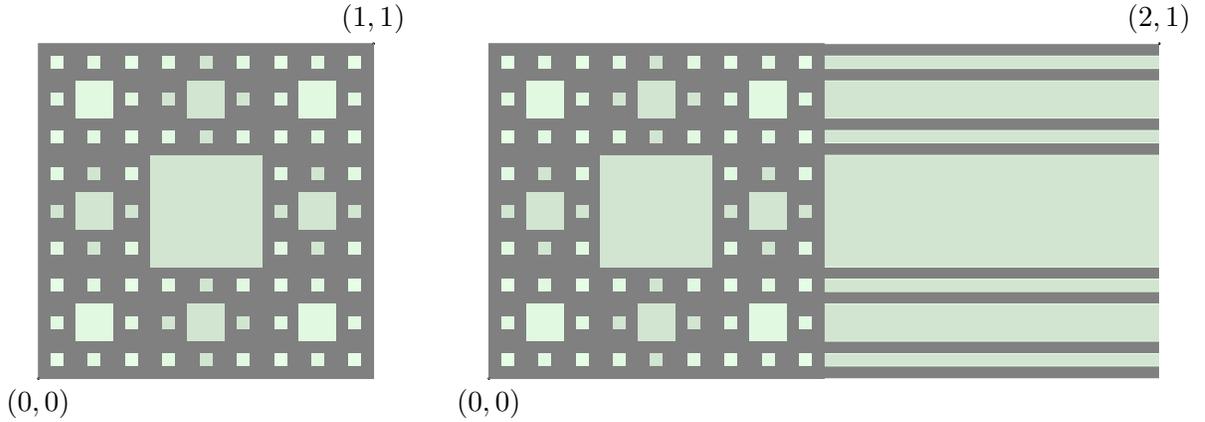
\begin{figure}[ht]
\vskip -0.25cm
\begin{center}
\begin{tabular}{cc}
\begin{tikzpicture}[x=1cm,y=1cm,scale=0.55]
\fill[gray] (0,0) -- (8.1,0) -- (8.1,8.1) -- (0,8.1) -- cycle;
\draw[fill=black] (8.1,8.1) circle (0.05em)  node[above] {$(1,1)$};
\draw[fill=black] (0,0) circle (0.05em)  node[below] {$(0,0)$};

	\foreach \i in {0,1,2}
	{
\fill[green!10,opacity=0.80] (2.7*\i+0.9,0.9) -- (2.7*\i+0.9,1.8) -- (2.7*\i+1.8,1.8) -- (2.7*\i+1.8,0.9) -- cycle;
\fill[green!10,opacity=0.80] (2.7*\i+0.9,6.3) -- (2.7*\i+0.9,7.2) -- (2.7*\i+1.8,7.2) -- (2.7*\i+1.8,6.3) -- cycle;

\fill[green!10,opacity=0.80] (0.9,2.7*\i+0.9) -- (1.8,2.7*\i+0.9) -- (1.8,2.7*\i+1.8) -- (0.9,2.7*\i+1.8) -- cycle;
\fill[green!10,opacity=0.80] (6.3,2.7*\i+0.9) -- (7.2,2.7*\i+0.9) -- (7.2,2.7*\i+1.8) -- (6.3,2.7*\i+1.8) -- cycle;
	}

\foreach \i in {0,1,2}
\foreach \j in {0,1,2}
\foreach \k in {0,2}
{
\fill[green!10,opacity=0.80] (2.7*\k+0.9*\i+0.3,2.7*\j+0.3) -- (2.7*\k+0.9*\i+0.3,2.7*\j+0.6) -- (2.7*\k+0.9*\i+0.6,2.7*\j+0.6) -- (2.7*\k+0.9*\i+0.6,2.7*\j+0.3) -- cycle;
\fill[green!10,opacity=0.80] (2.7*\k+0.9*\i+0.3,2.7*\j+2.1) -- (2.7*\k+0.9*\i+0.3,2.7*\j+2.4) -- (2.7*\k+0.9*\i+0.6,2.7*\j+2.4) -- (2.7*\k+0.9*\i+0.6,2.7*\j+2.1) -- cycle;
\fill[green!10,opacity=0.80] (2.7*\k+0.3,2.7*\j+0.9*\i+0.3) -- (2.7*\k+0.6,2.7*\j+0.9*\i+0.3) -- (2.7*\k+0.6,2.7*\j+0.9*\i+0.6) -- (2.7*\k+0.3,2.7*\j+0.9*\i+0.6) -- cycle;
\fill[green!10,opacity=0.80] (2.7*\k+2.1,2.7*\j+0.9*\i+0.3) -- (2.7*\k+2.4,2.7*\j+0.9*\i+0.3) -- (2.7*\k+2.4,2.7*\j+0.9*\i+0.6) -- (2.7*\k+2.1,2.7*\j+0.9*\i+0.6) -- cycle;

\fill[green!10,opacity=0.80] (2.7*\j+0.3,2.7*\k+0.9*\i+0.3) -- (2.7*\j+0.6,2.7*\k+0.9*\i+0.3) -- (2.7*\j+0.6,2.7*\k+0.9*\i+0.6) -- (2.7*\j+0.3,2.7*\k+0.9*\i+0.6) -- cycle;
\fill[green!10,opacity=0.80] (2.7*\j+2.1,2.7*\k+0.9*\i+0.3) -- (2.7*\j+2.4,2.7*\k+0.9*\i+0.3) -- (2.7*\j+2.4,2.7*\k+0.9*\i+0.6) -- (2.7*\j+2.1,2.7*\k+0.9*\i+0.6) -- cycle;
\fill[green!10,opacity=0.80] (2.7*\j+0.9*\i+0.3,2.7*\k+0.3) -- (2.7*\j+0.9*\i+0.3,2.7*\k+0.6) -- (2.7*\j+0.9*\i+0.6,2.7*\k+0.6) -- (2.7*\j+0.9*\i+0.6,2.7*\k+0.3) -- cycle;
\fill[green!10,opacity=0.80] (2.7*\j+0.9*\i+0.3,2.7*\k+2.1) -- (2.7*\j+0.9*\i+0.3,2.7*\k+2.4) -- (2.7*\j+0.9*\i+0.6,2.7*\k+2.4) -- (2.7*\j+0.9*\i+0.6,2.7*\k+2.1) -- cycle;
}

\draw[gray] (0,0) -- (8.1,0) -- (8.1,8.1) -- (0,8.1) -- cycle;
\fill[green!10,opacity=0.80] (2.7,2.7) -- (5.4,2.7) -- (5.4,5.4) -- (2.7,5.4) -- cycle;

\end{tikzpicture}
&
\begin{tikzpicture}[x=1cm,y=1cm,scale=0.55]
\fill[gray] (0,0) -- (8.1,0) -- (8.1,8.1) -- (0,8.1) -- cycle;
\fill[gray] (8.1,0) -- (16.2,0) -- (16.2,8.1) -- (8.1,8.1) -- cycle;
\draw[fill=black] (16.2,8.1) circle (0.05em)  node[above] {$(2,1)$};
\draw[fill=black] (0,0) circle (0.05em)  node[below] {$(0,0)$};

	\foreach \i in {0,1,2}
	{
\fill[green!10,opacity=0.80] (2.7*\i+0.9,0.9) -- (2.7*\i+0.9,1.8) -- (2.7*\i+1.8,1.8) -- (2.7*\i+1.8,0.9) -- cycle;
\fill[green!10,opacity=0.80] (2.7*\i+0.9,6.3) -- (2.7*\i+0.9,7.2) -- (2.7*\i+1.8,7.2) -- (2.7*\i+1.8,6.3) -- cycle;

\fill[green!10,opacity=0.80] (0.9,2.7*\i+0.9) -- (1.8,2.7*\i+0.9) -- (1.8,2.7*\i+1.8) -- (0.9,2.7*\i+1.8) -- cycle;
\fill[green!10,opacity=0.80] (6.3,2.7*\i+0.9) -- (7.2,2.7*\i+0.9) -- (7.2,2.7*\i+1.8) -- (6.3,2.7*\i+1.8) -- cycle;
	}

\foreach \i in {0,1,2}
\foreach \j in {0,1,2}
\foreach \k in {0,2}
{
\fill[green!10,opacity=0.80] (2.7*\k+0.9*\i+0.3,2.7*\j+0.3) -- (2.7*\k+0.9*\i+0.3,2.7*\j+0.6) -- (2.7*\k+0.9*\i+0.6,2.7*\j+0.6) -- (2.7*\k+0.9*\i+0.6,2.7*\j+0.3) -- cycle;
\fill[green!10,opacity=0.80] (2.7*\k+0.9*\i+0.3,2.7*\j+2.1) -- (2.7*\k+0.9*\i+0.3,2.7*\j+2.4) -- (2.7*\k+0.9*\i+0.6,2.7*\j+2.4) -- (2.7*\k+0.9*\i+0.6,2.7*\j+2.1) -- cycle;
\fill[green!10,opacity=0.80] (2.7*\k+0.3,2.7*\j+0.9*\i+0.3) -- (2.7*\k+0.6,2.7*\j+0.9*\i+0.3) -- (2.7*\k+0.6,2.7*\j+0.9*\i+0.6) -- (2.7*\k+0.3,2.7*\j+0.9*\i+0.6) -- cycle;
\fill[green!10,opacity=0.80] (2.7*\k+2.1,2.7*\j+0.9*\i+0.3) -- (2.7*\k+2.4,2.7*\j+0.9*\i+0.3) -- (2.7*\k+2.4,2.7*\j+0.9*\i+0.6) -- (2.7*\k+2.1,2.7*\j+0.9*\i+0.6) -- cycle;

\fill[green!10,opacity=0.80] (2.7*\j+0.3,2.7*\k+0.9*\i+0.3) -- (2.7*\j+0.6,2.7*\k+0.9*\i+0.3) -- (2.7*\j+0.6,2.7*\k+0.9*\i+0.6) -- (2.7*\j+0.3,2.7*\k+0.9*\i+0.6) -- cycle;
\fill[green!10,opacity=0.80] (2.7*\j+2.1,2.7*\k+0.9*\i+0.3) -- (2.7*\j+2.4,2.7*\k+0.9*\i+0.3) -- (2.7*\j+2.4,2.7*\k+0.9*\i+0.6) -- (2.7*\j+2.1,2.7*\k+0.9*\i+0.6) -- cycle;
\fill[green!10,opacity=0.80] (2.7*\j+0.9*\i+0.3,2.7*\k+0.3) -- (2.7*\j+0.9*\i+0.3,2.7*\k+0.6) -- (2.7*\j+0.9*\i+0.6,2.7*\k+0.6) -- (2.7*\j+0.9*\i+0.6,2.7*\k+0.3) -- cycle;
\fill[green!10,opacity=0.80] (2.7*\j+0.9*\i+0.3,2.7*\k+2.1) -- (2.7*\j+0.9*\i+0.3,2.7*\k+2.4) -- (2.7*\j+0.9*\i+0.6,2.7*\k+2.4) -- (2.7*\j+0.9*\i+0.6,2.7*\k+2.1) -- cycle;
}

\fill[green!10,opacity=0.80] (8.1,2.7) -- (16.2,2.7) -- (16.2,5.4) -- (8.1,5.4)-- cycle;

\foreach \i in {0,2}
{
\fill[green!10,opacity=0.80] (8.1,0.9+2.7*\i) -- (16.2,0.9+2.7*\i) -- (16.2,1.8+2.7*\i) -- (8.1,1.8+2.7*\i)-- cycle;
}

\foreach \i in {0,2}  \foreach \j in {0,2}
{
\fill[green!10,opacity=0.80] (8.1,0.3+0.9*\j+2.7*\i) -- (16.2,0.3+0.9*\j+2.7*\i) -- (16.2,0.6+0.9*\j+2.7*\i) -- (8.1,0.6+0.9*\j+2.7*\i)-- cycle;
}

\foreach \i in {0,2}
\foreach \j in {0,1,2,3}
\foreach \k in {0,2}
{
\fill[green!10,opacity=0.80] (8.1,0.3*\j+0.9*\i+2.7*\k) -- (16.2,0.3*\j+0.9*\i+2.7*\k);
}

\draw[gray] (0,0) -- (8.1,0) -- (8.1,8.1) -- (0,8.1) -- cycle;
\fill[green!10,opacity=0.80] (2.7,2.7) -- (5.4,2.7) -- (5.4,5.4) -- (2.7,5.4) -- cycle;

\end{tikzpicture}

\end{tabular}

\end{center}\vskip -0.5cm
\caption{Finite approximations of $X$ and $Y$.}\label{variant-CD}
\end{figure}
\end{exam}

\begin{exam}\label{slc-bad}
Let $f(z)=z^2$. We will construct a continuum $K\subset\mathbb{C}$ such that the decomposition $\Dc_K^{PS}$ in Theorem \ref{invariance} can not be replaced by $\Dc_K^{SLC}$, the core decomposition of $K$ with respect to the semi-locally connected property. The existence of such a core decomposition has been obtained in \cite[Theorem 2.7]{FitzGerald67}. See Remark \ref{semi-lc} for more details. Let $\mathcal{C}$ be the Cantor ternary set and $K=\{re^{2\pi{\bf i}t}: r\in\mathcal{C}, 0\le t\le1\}\cup[0,1]$. Then $f^{-1}(K)=\{\sqrt{r}e^{2\pi{\bf i}t}: r\in\mathcal{C}, 0\le t\le1\}\cup[-1,1]$ and $K$ are both continua in the plane. Clearly, $f^{-1}(K)$ is semi-locally connected while $K$ is not. Since $\Dc_K^{SLC}$ has uncountably many elements that are concentric circles centered with radius $r$ for some $r\in\mathcal{C}$, the inclusion $f(\{x\})\in\Dc_K^{SLC}$ does not hold for every element $\{x\}\in\Dc_L^{SLC}$ with $|x|\in\mathcal{C}\setminus\{0\}$. However, it is still true that $f$ sends every element of $\Dc_L^{SLC}$ into an element of $\Dc_K^{SLC}$.
\end{exam}

\begin{rema}\label{semi-lc}
Let $K$ be a continuum, planar or non-planar. By \cite[Theorem 2.4]{FitzGerald67}, if $M$ is a monotone decomposition, each element of $M$ is $T$-closed if and only if the hyperspace is semi-locally connected. Let $\Mc^{SLC}(K)$ denote all the monotone decompositions of $K$ with semi-locally connected hyperspace. By \cite[Theorem 2.7]{FitzGerald67}, there exists a unique  $\Dc_K^{SLC}\in \Mc^{SLC}(K)$ that is finer than the other elements of $\Mc^{SLC}(K)$. The decomposition $\Dc_K^{SLC}$ is called the \emph{core decomposition of $K$} with semi-locally connected hyperspace, and the hyperspace $\Dc_K^{SLC}$ under quotient topology is called the {\em semi-locally connected model for $K$}. For a continuum $K\subset\hat{\mathbb{C}}$, from Example \ref{slc-bad} we know that the result in Theorem \ref{invariance} does not hold if $\Dc^{PS}(K)$ is replaced by $\Dc^{SLC}(K)$.
\end{rema}

In the rest of this section, we want to represent the core decomposition of planar compacta with Peano hyperspace, in a way that follows FitzGerald-Swingle’s approach \cite{FitzGerald67}. A brief summary of this approach is recalled in Remark \ref{semi-lc}.  In the following we review further details concerning FitzGerald-Swingle's T-function and the core decomposition of a continuum with semi-locally connected hyperspace \cite{FitzGerald67}.

Given a compactum $K$ and  $A\subset K$, the set $T(A)$ consists of all $y\in K$ for which there do not exist both an open set $Q$ and a continuum $W$ such that $y\in Q\subset W\subset (K\setminus A)$. The set $T(A)$ is closed and contains $A$ for all $A\subset K$; moreover, it is connected whenever $A$ is. We say that $K$ is semi-locally connected at a point $x\in K$ provided that for every open subset $U$ of $K$ with $x\in U$, there exists an open subset $V$ such that $x\in V\subset U$ and $K\setminus V$ has only a finite number of components. 
By routine arguments one may check that $K$ is semi-locally connected at $x\in K$ if and only if $T(x)=x$  \cite[Lemma 1.4]{FitzGerald67}. Given a monotone decomposition $\Dc$ of $K$, the natural projection $\pi: K\rightarrow\Dc$ sending a point $x\in K$ to the only element of $\Dc$ containing $x$ even carries the previous result to the hyperspace, which is semi-locally connected at $\pi(x)$ if an only if $\pi(x)\in\Dc$ is $T$-closed in the sense that $T(\pi(x))=\pi(x)$  \cite[Theorem 2.4]{FitzGerald67}. From this it readily follows that, among the collection  $\Mc_K^{SLC}$ of all monotone decompositions whose elements are $T$-closed, there is a finest member $\Dc_K^{SLC}$, called the {\em core decomposition of $K$} with semi-locally connected hyperspace.

The question is: can we use such an approach to present the existence of $\Dc_K^{SLC}$, in which  $T$-function is replaced by $S$-function newly introduced in Definition \ref{S-function invariant} ? In other words, we are asking about a representation of the main theorems of \cite{LLY-2017} in the spirit of \cite{FitzGerald67}.

The answering of this question requires us to extend the $S$-function given in Definition \ref{S-function}, for single points only, to the cases of all continua $A\subset K$; moreover, we even require that $S(A)$ is always connected. By Theorem \ref{S-basic} we have $S(x)=\overline{R_K}[x]$ for all $x\in K$; by Theorem \ref{connected-fiber} the set $S(x)$ is a continuum. Therefore, we extend the $S$-function as follows.

\begin{deff}
Given a compactum $K$ and a continuum $A\subset K$, let $S(A)$ be the union of all those $S(x)$ with $x\in A$. Any continuum $A\subset K$ with $S(A)=A$ is called {\em $S$-closed}.
\end{deff}

Fix a compactum $K\subset\hat{\mathbb{C}}$. Then it is routine to verify that $S(A)$ is a continuum for every continuum $A\subset K$. When $K$ is connected, the result of \cite[Theorem 1.1]{Luo07} indicates that $K$ is locally connected if and only if $S(x)=\{x\}$ for all $x\in K$. On the other hand, by the result of \cite[Theorem 3]{LLY-2017} we see that $K$ is a Peano space if and only if $S(x)=\{x\}$ for all $x\in K$. Those results are stated in a way similar to the statement of \cite[Lemma 1.4]{FitzGerald67}. By \cite[Theorem 7]{LLY-2017}, we may follow the spirit of  \cite[Theorem 2.4]{FitzGerald67} and obtain the following criterion, which then ensures the existence of $\Dc_K^{PS}$.

\begin{theo}\label{S-key}
A monotone decomposition $\Dc$ of a compactum $K\subset\hat{\mathbb{C}}$ has a Peano hyperspace if and only if the elements of $\Dc$ are $S$-closed continua.
\end{theo}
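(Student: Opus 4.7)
The plan is to leverage Theorem \ref{S-basic}, which identifies $S(x)$ with the fiber $\overline{R_K}[x]$, together with the description of $\Dc_K^{PS}$ from \cite[Theorem 7]{LLY-2017} as the quotient of $K$ by the minimal closed equivalence $\sim_K$ containing the Sch\"onflies relation $R_K$. Both implications then reduce to the interplay of these two facts.

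For the ``only if'' direction, suppose $\Dc\in\Mc^{PS}(K)$. By the defining property of $\Dc_K^{PS}$ as the finest member of $\Mc^{PS}(K)$, it refines $\Dc$, so every $d\in\Dc$ is a union of $\sim_K$-classes. For each $x\in d$, Theorem \ref{S-basic} gives $S(x)=\overline{R_K}[x]\subset[x]_{\sim_K}\subset d$. Taking the union over $x\in d$ and using the trivial inclusion $A\subset S(A)$ (which follows because $x\in S(x)$ in Definition \ref{S-function}), one concludes $S(d)=d$, so $d$ is $S$-closed.

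For the ``if'' direction, assume every $d\in\Dc$ is an $S$-closed continuum, and let $\equiv_\Dc$ denote the equivalence on $K$ induced by $\Dc$. If $(x,y)\in R_K$ with $x\in d$, then $y\in\overline{R_K}[x]=S(x)\subset S(d)=d$, showing $R_K\subset\equiv_\Dc$. Upper semicontinuity of $\Dc$ (implicit in speaking of a Hausdorff hyperspace) forces $\equiv_\Dc$ to be closed in $K\times K$, hence it contains the smallest closed equivalence $\sim_K$ generated by $R_K$. Therefore $\Dc_K^{PS}$ refines $\Dc$, and writing $q\colon K\to\Dc_K^{PS}$ and $\pi\colon K\to\Dc$ for the two natural projections, we obtain a continuous monotone surjection $p\colon\Dc_K^{PS}\to\Dc$ with $\pi=p\circ q$. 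It then remains to verify that $\Dc$, as a monotone quotient of the Peano space $\Dc_K^{PS}$, is itself Peano.

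The delicate point is precisely this final transfer of the Peano property. Each component of $\Dc$ is the $p$-image of a single component of $\Dc_K^{PS}$, because the connected $p$-fibers lie in single components upstairs; local connectedness of components is therefore preserved under the continuous surjection $p$. For the null-sequence condition on diameters of non-degenerate components, one invokes the uniform continuity of $p$ on the compactum $\Dc_K^{PS}$: for every $\epsilon>0$ there is $\delta>0$ such that any component of diameter less than $\delta$ has $p$-image of diameter less than $\epsilon$. With these two observations in hand, the theorem becomes a formal consequence of the identification $S(x)=\overline{R_K}[x]$ from Theorem \ref{S-basic}.
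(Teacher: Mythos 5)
Your proposal is correct and follows essentially the same route as the paper: both directions reduce to the identification $S(x)=\overline{R_K}[x]$ together with the fact that $\Dc_K^{PS}$ is the partition by the smallest closed equivalence containing $R_K$, so that $S$-closedness of the elements of $\Dc$ is equivalent to $\Dc$ being refined by $\Dc_K^{PS}$. The only difference is that you spell out the final transfer step (a monotone continuous image of a Peano space is Peano, via single-component preimages and uniform continuity), which the paper delegates to its citation of \cite[Theorem 5]{LLY-2017}.
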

\begin{proof}
Let $R_K$ be the Sch\"onflies relation and $\sim$ the Sch\"onflies equivalence. Let $\Dc_K$ be the parition of $K$ into $\sim$ classes. If all the elements of $\Dc$ are $S$-closed then $\Dc_K$ refines $\Dc$. Therefore, there is a projection $\pi: \Dc_K\rightarrow\Dc$. Combining this with \cite[Theorem 5]{LLY-2017}, we see that $\Dc$ under quotient topology is a Peano space. On the other hand, if the hyperspace of $\Dc$ is a Peano space the result of \cite[Theorem 6]{LLY-2017} implies that $\Dc$ is refined by $\Dc_K$ hence that every element of $\Dc$ is $S$-closed.
\end{proof}

For the moment it is unclear how useful the $S$-function is for a non-planar compactum $K$. In particular, we even do not know, for a non-planar compactum $K$,  whether $S(x)$ is connected for all $x\in K$; although this is true for $K\subset\hat{\mathbb{C}}$, see Theorem \ref{S-basic} and Theorem \ref{connected-fiber}.

To end this section, we compare the $T$-function and $S$-function in Theorem \ref{TS}, which implies that a monotone decomposition of a continuum into $S$-closed sub-continua necessarily has a semi-locally connected hyperspace. However, this hyperspace may not be locally connected.  non-locally connected The continuum $K\subset\mathbb{R}^3$ given in Example \ref{no-CD-ex} is not locally connected but semi-locally connected everywhere; moreover, it satisfies $S(x)=T(x)=\{x\}$ for all $x\in K$.

\begin{theo}\label{TS}
Let $K$ be a non-planar continuum. Then $T(x)\subset S(x)$ for any $x\in K$.
\end{theo}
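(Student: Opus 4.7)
The plan is to prove the contrapositive: if $y\notin S(x)$, then $y\notin T(x)$. First I would unpack the hypothesis: from $y\notin S(x)$ there are disjoint open sets $U_x\ni x$ and $U_y\ni y$ in $K$ such that only finitely many components $Q_1,\ldots,Q_n$ of $Z:=K\setminus(U_x\cup U_y)$ meet both $\partial U_x$ and $\partial U_y$. Lemma \ref{shrinking} lets me shrink so that $\overline{U_x}\cap\overline{U_y}=\emptyset$ while preserving this finiteness. The continuum that will witness $y\notin T(x)$ is $W$, the component of the compact set $K\setminus U_x$ containing $y$; this is automatically a subcontinuum of $K\setminus\{x\}$, and the whole problem reduces to exhibiting a $K$-open ball around $y$ contained in $W$.

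To produce such a ball I would argue by contradiction. Suppose no such ball exists; then for each $\delta>0$ some component of $K\setminus U_x$ other than $W$ meets $B(y,\delta)\cap K$, and a standard extraction produces pairwise distinct components $W_k$ of $K\setminus U_x$ together with points $p_k\in W_k$ satisfying $p_k\to y$. Now two applications of the Boundary Bumping Theorem \cite[p.73]{Nadler92} drive the argument. First, applied to $K\setminus U_x$ as a proper closed subset of the continuum $K$, each $W_k$ must contain a point $q_k\in\partial U_x$; the condition $\overline{U_x}\cap\overline{U_y}=\emptyset$ forces $q_k\notin U_y$. Second, since $p_k\to y\in U_y$, the set $W_k\setminus U_y$ is, for large $k$, a proper closed subset of the continuum $W_k$, so its component $D_k$ containing $q_k$ must meet $\partial U_y$. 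Thus each $D_k\subset Z$ is a continuum meeting both $\partial U_x$ and $\partial U_y$, which forces $D_k\subset Q_{i(k)}$ for some $i(k)\in\{1,\ldots,n\}$.

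The pigeonhole principle then fixes some $Q_i$ that contains $D_k$ for infinitely many $k$. Since $Q_i$ is a connected subset of $K\setminus U_x$, it lies in a single component $W^*$ of $K\setminus U_x$, so each of the corresponding $W_k$ must coincide with $W^*$ — contradicting the pairwise distinctness of the $W_k$. This shows that only finitely many components $W_1=W,W_2,\ldots,W_m$ of $K\setminus U_x$ meet $B(y,\varepsilon_0)\cap K$ for some $\varepsilon_0>0$; since $W_2,\ldots,W_m$ are closed in $K$ and avoid $y$, a further shrinking of the radius confines $B(y,\varepsilon)\cap K$ to $W$. This ball, paired with the continuum $W$, supplies the data required by the definition of the $T$-function and certifies $y\notin T(x)$.

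The main obstacle I anticipate is the possible failure of local connectedness of $K$ at $y$, which is precisely what prevents one from simply taking $U_y$ (or a connected sub-neighborhood of it) as the open set inside $W$; this is why the two-step Boundary Bumping argument is essential. Its role is to transfer the given ``finitely many components meeting $\partial U_x$ and $\partial U_y$'' control into ``finitely many components of $K\setminus U_x$ accumulate on $y$'' control, which is what finally yields the $K$-open ball inside $W$.
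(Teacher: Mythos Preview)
Your argument is correct. Both you and the paper prove the contrapositive and end by exhibiting a continuum $W\subset K\setminus\{x\}$ containing $y$ in its interior, but the routes diverge. The paper works constructively: starting from the finitely many components $P_1,\ldots,P_n$ of $K\setminus(U_x\cup U_y)$ meeting both boundaries, it enlarges each $P_i$ first by attaching the components of $K\cap\overline{U_y}$ that meet it (forming $P_i^*$), then by attaching all components of $K\setminus(U_x\cup U_y)$ that meet $P_i^*$ (forming $P_i^{**}$); a Boundary Bumping argument shows that $\{P_i^{**}\}$ covers $K\setminus U_x$ by finitely many continua, so $K\setminus U_x$ has only finitely many components and $y$ lies in the interior of one of them. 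Your approach is instead local and by contradiction: you never analyze $K\setminus U_x$ globally, but only show that infinitely many of its components cannot accumulate on $y$, by pushing each such component through two Boundary Bumping steps into one of the $Q_i$ and invoking pigeonhole. Your route is somewhat more economical and sidesteps the verification that the $P_i^{**}$ really cover all of $K\setminus U_x$; the paper's route, on the other hand, yields the slightly stronger byproduct that $K\setminus U_x$ itself has only finitely many components.
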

\begin{proof}
Fix a point $y\notin S(x)$ we can find two disjoint open sets $U_x, U_y$ with $x\in U_x$ and $y\in U_y$ such that $K\setminus(U_x\cup U_y)$ has at most finitely many components, say $P_1,\ldots,P_n$, that intersect $\partial U_x$ and $\partial U_y$ both. Let $P_i^*$ be the union of $P_i$ with all the components of $K\cap \overline{U_y}$ that intersect $P_i$. Let $P_i^{**}$ be the union of all the components of $K\setminus(U_x\cup U_y)$ that intersect $P_i^*$. By the well known Boundary Bumping Theorem II~\cite[Theorem 5.6, p74]{Nadler92}, every component of $K\cap\overline{U_y}$ intersects at least one $P_i$ thus is contained in $P_i^*$. By the same reason, a component of $K\setminus(U_x\cup U_y)$ intersecting $\partial U_y$ intersects some $P_i^*$. Therefore, $\{P_i^{**}: 1\le i\le n\}$ is a finite cover of $K\setminus U_x$ by continua. This indicates that $K\setminus U_x$ may be covered by finitely many disjoint continua. Denote by $W$ the one containing $y$. Then $y$ lies in the interior of some $P_i^{**}$, which is disjoint from $U_x$. Setting $W=P_i^{**}$ and $V=W^o$. Then $V$ is an open set and $W$ is a continuum, satisfying $y\in V\subset W \subset  (K\setminus\{x\})$. This verifies that $y\notin T(x)$.
\end{proof}

\section{Planar Compacta $K$ with $\lambda(K)\in\mathbb{N}\cup\{\infty\}$ }\label{scale-examples}

In this section we recall two classical continua and two examples of planar continua $K$ from \cite{JLL16,LL-2018}. We will explicitly determine the function $\lambda_K: K\rightarrow\mathbb{N}\cup\{\infty\}$ for those $K$. In particular, Cantor's teepee~\cite[p.145]{SS-1995} provides an example of continuum $K$ with $\lambda_K(x)\equiv\infty$. See the following Figure \ref{teepee} for a simplified depiction.

\begin{figure}[ht]\vskip -0.45cm
\begin{center}

\begin{tikzpicture}[x=1.6cm,y=1cm,scale=0.8]
\foreach \i in {0,...,3}
{
\draw[gray,thick] (3.645,2) -- (0.09*\i,0);
\draw[gray,thick] (3.645,2) -- (0.54+0.09*\i,0);
\draw[gray,thick] (3.645,2) -- (1.62+0.09*\i,0);
\draw[gray,thick] (3.645,2) -- (2.16+0.09*\i,0);

\draw[gray,thick] (3.645,2) -- (4.86+0.09*\i,0);
\draw[gray,thick] (3.645,2) -- (5.4+0.09*\i,0);
\draw[gray,thick] (3.645,2) -- (6.48+0.09*\i,0);
\draw[gray,thick] (3.645,2) -- (7.02+0.09*\i,0);
}

\fill[black] (3.645,2) node[above]{$p$} circle (0.3ex);

\end{tikzpicture}

\end{center}\vskip -1.0cm
\caption{A simple representation of Cantor's Teepee.}\label{teepee}\vskip -0.25cm
\end{figure}
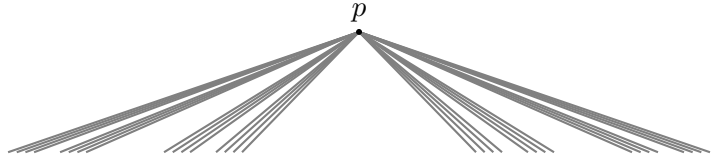

Generally, for any integer $n\ge1$, we can find a continuum $K_n\subset\mathbb{C}$ with $\lambda(K)=n$.  Denote by $\ell(K)$ the scale of non-local connectedness introduced in \cite{JLL16}. Then $\ell(K)=\lambda(K)$ and $\ell(K)<\lambda(K)$ are both possible. When $K$ is Cantor's teepee, we have $\ell(K)=1<\lambda(K)$. And we conjecture that $\ell(K)\le\lambda(K)$ holds for all compacta in the plane.

\begin{exam}[{\bf Witch's Broom}]\label{relations}
Let $K\subset\mathbb{C}$ be the Witch's Broom \cite[p.84, Figure 5.22]{Nadler92}. Then $\lambda_K(x)=0$ for all $x\notin\mathbb{R}$; and $\lambda_K(x)=1$ for $x\in[0,1]$.
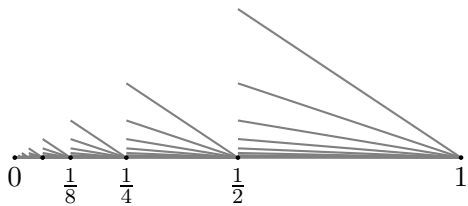
\begin{figure}[ht]
\vskip -0.15cm\begin{center}
\begin{tikzpicture}[x=1.5cm,y=1cm,scale=0.618]
\foreach \i in {0,...,8}
{
\draw[gray,thick] (6.4,0) -- (3.2,3.2/2^\i);
\draw[gray,thick] (3.2,0) -- (1.6,1.6/2^\i);
\draw[gray,thick] (1.6,0) -- (0.8,0.8/2^\i);
\draw[gray,thick] (0.8,0) -- (0.4,0.4/2^\i);
\draw[gray,thick] (0.4,0) -- (0.2,0.2/2^\i);
\draw[gray,thick] (0.2,0) -- (0.1,0.1/2^\i);
\draw[gray,thick] (0.1,0) -- (0.05,0.05/2^\i);
\draw[gray,thick] (0.05,0) -- (0.025,0.025/2^\i);
}
\draw[gray,thick] (0,0) -- (6.4,0);
\fill[black] (6.4,0) node[below]{$1$} circle (0.3ex);
\fill[black] (3.2,0) node[below]{$\frac12$} circle (0.3ex);
\fill[black] (1.6,0) node[below]{$\frac14$} circle (0.3ex);
\fill[black] (0.8,0) node[below]{$\frac18$} circle (0.3ex);
\fill[black] (0.4,0)  circle (0.3ex);
\fill[black] (0,0) node[below]{$0$} circle (0.3ex);
\end{tikzpicture}
\end{center}
\vskip -1.0cm
\caption{A finite approximation of the Witch's Broom.}\label{broom}
\vskip -0.25cm
\end{figure}
Thus we have $\ell(K)=\lambda(K)=1$.
\end{exam}

\begin{exam}[{\bf Cantor's Comb}]\label{cantor-comb}
Let $\Kc\subset[0,1]$ be Cantor's ternary set. Let $K$ be the union of $\Kc\times[0,1]\subset\mathbb{C}$ and $\{t+{\bf i}: 0\le t\le 1\}$, which may be called Cantor's Comb. Then $\lambda_K(x)=0$ for all $x\notin K\setminus(\Kc\times[0,1])$; moreover, $\lambda_K(x)=1$ for other $x\in K$. Here we have $\lambda(K)=\ell(K)=1$.
\begin{figure}[ht]
\begin{center}\vskip -0.25cm

\begin{tikzpicture}[x=1cm,y=1cm,scale=1.0]

\draw[very thick] (0,3) -- (3,3);

\foreach \i in {0,...,3}
{
    \draw[gray,thick] (3*\i/27,0) -- (3*\i/27,3);
    \draw[gray,thick] (6/9+3*\i/27,0) -- (6/9+3*\i/27,3);
     \draw[gray,thick] (2+3*\i/27,0) -- (2+3*\i/27,3);
    \draw[gray,thick] (2+6/9+3*\i/27,0) -- (2+6/9+3*\i/27,3);
}
\fill[black] (0,3) node[left]{${\bf i}$} circle (0.3ex);
\fill[black] (3,3) node[right]{$1+{\bf i}$} circle (0.3ex);


\end{tikzpicture}
\end{center}\vskip -1.0cm
\caption{A finite approximation of Cantor's Comb.}\label{comb}
\end{figure}
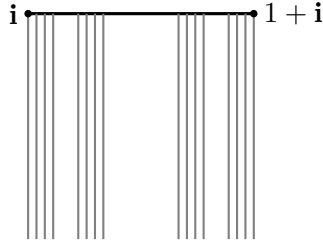
\end{exam}

\begin{exam}\label{scale=2}
Let $K$ be Cantor's Comb given in Example \ref{cantor-comb}. Let $K_2$ be the union of $K$ and its image $g(K)$ under $g(z)=1+{\bf i}-z{\bf i}$, as depicted in the left part of Figure \ref{combs}. Then $\lambda_{K_2}(x)=2$ for all $x\in \Kc\times[0,1]$, and $\lambda_{K_2}(x)=0$ for all $x\in g(K)\setminus g(\Kc\times[0,1])$; for the other cases, we have $\lambda_{K_2}(x)=1$.
\begin{figure}[ht]
\begin{center}
\vskip -0.0cm

\begin{tikzpicture}[x=1cm,y=1cm,scale=1.0]

\foreach \i in {0,...,3}
{
    \draw[thick, gray] (3*\i/27,0) -- (3*\i/27,3);
    \draw[thick, gray] (6/9+3*\i/27,0) -- (6/9+3*\i/27,3);
     \draw[thick, gray] (2+3*\i/27,0) -- (2+3*\i/27,3);
    \draw[thick, gray] (2+6/9+3*\i/27,0) -- (2+6/9+3*\i/27,3);
}
\draw[black,very thick] (0,3) -- (3,3);
\fill[black] (0,3) circle (0.3ex); \fill[black] (3,3) circle (0.3ex);

\foreach \i in {0,...,3}
{
    \draw[thick, gray] (0,3+3*\i/27) -- (3,3+3*\i/27);
    \draw[thick, gray] (0,3+6/9+3*\i/27) -- (3,3+6/9+3*\i/27);
     \draw[thick, gray] (0,3+2+3*\i/27) -- (3,3+2+3*\i/27);
    \draw[thick, gray] (0,3+2+6/9+3*\i/27) -- (3,3+2+6/9+3*\i/27);
}
\draw[black,very thick] (3,3) -- (3,6);
\fill[black] (3,3) circle (0.3ex); \fill[black] (3,6) circle (0.3ex);


\foreach \i in {0,...,3}
{
    \draw[thick, gray] (7+3*\i/27,0) -- (7+3*\i/27,3);
    \draw[thick, gray] (7+6/9+3*\i/27,0) -- (7+6/9+3*\i/27,3);
     \draw[thick, gray] (7+2+3*\i/27,0) -- (7+2+3*\i/27,3);
    \draw[thick, gray] (7+2+6/9+3*\i/27,0) -- (7+2+6/9+3*\i/27,3);
}
\draw[black,very thick] (7,3) -- (7+3,3);
\fill[black] (7,3) circle (0.3ex); \fill[black] (7+3,3) circle (0.3ex);

\foreach \i in {0,...,3}
{
    \draw[thick, gray] (7,3+3*\i/27) -- (7+3,3+3*\i/27);
    \draw[thick, gray] (7,3+6/9+3*\i/27) -- (7+3,3+6/9+3*\i/27);
     \draw[thick, gray] (7,3+2+3*\i/27) -- (7+3,3+2+3*\i/27);
    \draw[thick, gray] (7,3+2+6/9+3*\i/27) -- (7+3,3+2+6/9+3*\i/27);
}
\draw[black,very thick] (10,3) -- (10,6);
\fill[black] (10,3) circle (0.3ex); \fill[black] (10,6) circle (0.3ex);

\foreach \i in {0,...,3}
{
    \draw[thick, gray] (3+7+3*\i/27,3+0) -- (3+7+3*\i/27,3+3);
    \draw[thick, gray] (3+7+6/9+3*\i/27,3+0) -- (3+7+6/9+3*\i/27,3+3);
     \draw[thick, gray] (3+7+2+3*\i/27,3+0) -- (3+7+2+3*\i/27,3+3);
    \draw[thick, gray] (3+7+2+6/9+3*\i/27,3+0) -- (3+7+2+6/9+3*\i/27,3+3);
}
\draw[black,very thick] (10,6) -- (13,6);
\fill[black] (10,6) circle (0.3ex); \fill[black] (10+3,6) circle (0.3ex);

\end{tikzpicture}

\end{center}\vskip -0.5cm
\caption{(Left). The union of the Cantor's Comb with one of its copies. (Right). $K_3$.}\label{combs}\vskip -0.25cm
\end{figure}
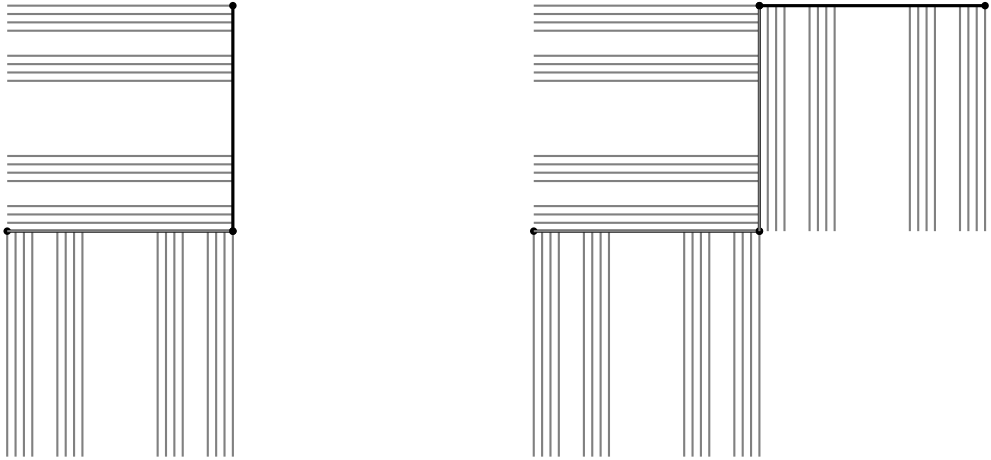
Given the continuum $K_2$, we may put $K_3=K_2\cup\left(K+1+{\bf i}\right)$, which is a continuum with $\lambda(K_3)=3$. See the right part of Figure \ref{combs}. This construction may be done indefinitely, to obtain a continuum $K_n$ with $\lambda(K_n)=n$ for $n\ge4$. The general formula 
\[K_{2m+1}=K_{2m}\bigcup\left(K+m+m{\bf i}\right)\quad\text{and}\quad K_{2m+2}=K_{2m+1}\bigcup\left(g(K)+m+m{\bf i}\right)\]
are set for all $m\ge1$. Given such a continuum $K_n$, its lambda function  is nearly immediate, as soon as the lambda function of $K_{n-1}$ is determined. Actually, the containments $K\subset K_2\subset K_3\subset\cdots\subset K_{n-1}\subset K_n$ are clear. Moreover, it is routine to check that $K_{n-1}$ is a member of the core decomposition $\Dc_{K_n}^{PS}$, which indicates that $\lambda_{K_n}(x)=1+\lambda_{K_{n-1}}(x)$ for all $x\in K_{n-1}\subset K_n$. \end{exam}

The last example is a response to a well known result by Moore \cite{Moore25-a}, concerning a simple monotone decomposition of a continuum such that the hyperspace is a Peano continuum.

Given a continuum $K$, planar or nonplanar, let  $K_{NLC}$ be the collection of all the points $x\in K$ at which $K$ is not locally connected. Then $\overline{K_{NLC}}$ gives rise to a monotone decomposition $\Dc_{K,LC}$ whose elements are either a singleton contained in $K\setminus\overline{K_{NLC}}$ or a component of $\overline{K_{NLC}}$. By Moore's result \cite{Moore25-a} (See also \cite[p.247, \S 49, VI, Theorem 3]{Kuratowski68}), the hyperspace $\Dc_{K,LC}$ is a Peano continuum. To be brief, we may refer to $\Dc_{K,LC}$ as {\em Moore's decomposition of $K$}.

In the following we construct a concrete continuum $K\subset\mathbb{C}$ such that $\lambda_K^{-1}(0)$ contains a non-degenerate component of $\overline{K_{NLC}}$. This implies that the core decomposition of $\Dc_K^{PS}$ is strictly finer than $\Dc_{K,LC}$. On the other hand, Example \ref{no-CD-ex}
gives a non-planar continuum $K\subset\mathbb{R}^3$ such that there are uncountably many monotone decompositions whose hyperspaces are Peano continua; moreover, those decompositions are strictly finer than Moore's decomposition $\Dc_{K,LC}$.

\begin{exam}\label{NLC-point-CD}
Let $f_1(z)=\frac{1}{2}z$ and $f_2(z)=\frac{1}{2}(z+{\bf i})$. Then $E=\{t{\bf i}: 0\le t\le 1\}\subset\mathbb{C}$ is the unique non-empty compact set satisfying $E=f_1(E)\cup f_2(E)$ \cite{Hutchinson81}.
Following Barnsley's idea, we may choose a continuum $A$ as the {\em condensation set}, which is the union of a copy of Cantor's comb and the broken line connecting $1$ through $2,2+{\bf i}$ to $1+{\bf i}$. See the right part of Figure \ref{nlc-points}.
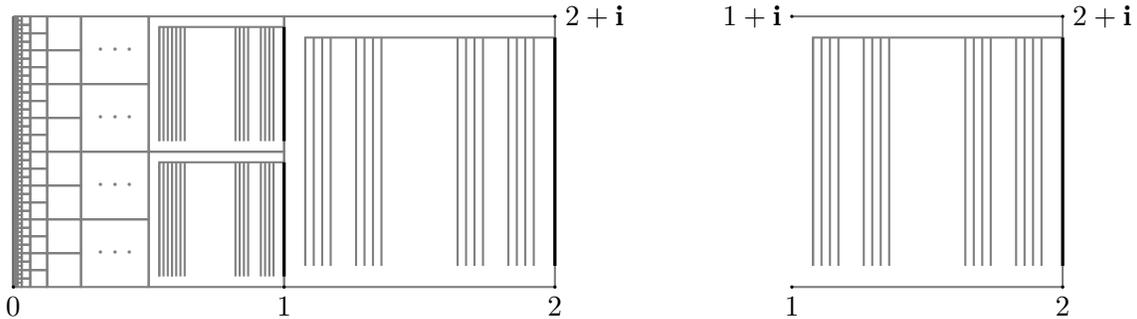
\begin{figure}[ht]
\begin{center}

\begin{tikzpicture}[x=0.25cm,y=0.25cm,scale=0.45]

\draw[thick] (0,0) -- (0,32);
\draw[gray,thick] (64,0) -- (64,32) -- (32,32) -- (32,0) -- (64,0);

\foreach \j in {0,1}
{
    \draw[gray, thick] (32,\j*16) -- (32,16+\j*16) -- (16,16+\j*16) -- (16,\j*16) --(32,\j*16);
}

\foreach \j in {0,...,3}
{
    \draw[gray, thick] (16,\j*8) -- (16,8+\j*8) -- (8,8+\j*8) -- (8,\j*8) --(16,\j*8);
}

\foreach \j in {0,...,7}
{
    \draw[gray, thick] (8,\j*4) -- (8,4+\j*4) -- (4,4+\j*4) -- (4,\j*4) --(8,\j*4);
}

\foreach \j in {0,...,15}
{
    \draw[gray, thick] (4,\j*2) -- (4,2+\j*2) -- (2,2+\j*2) -- (2,\j*2) --(4,\j*2);
}

\foreach \j in {0,...,31}
{
    \draw[gray, thick] (2,\j*1) -- (2,1+\j*1) -- (1,1+\j*1) -- (1,\j*1) --(2,\j*1);
}

\foreach \j in {0,...,63}
{
    \draw[gray, thick] (1,\j*1/2) -- (1,1/2+\j*1/2) -- (1/2,1/2+\j*1/2) -- (1/2,\j*1/2) --(1,\j*1/2);
}

\foreach \j in {0,...,127}
{
    \draw[gray, thick] (1/2,\j*1/4) -- (1/2,1/4+\j*1/4) -- (1/4,1/4+\j*1/4) -- (1/4,\j*1/4) --(1/2,\j*1/4);
}

\foreach \j in {0,...,255}
{
    \draw[gray, thick] (1/4,\j*1/8) -- (1/4,1/8+\j*1/8) -- (1/8,1/8+\j*1/8) -- (1/8,\j*1/8) --(1/4,\j*1/8);
}

\fill[black] (0,0) node[below]{$0$} circle (0.3ex);
\fill[black] (32,0) node[below]{$1$} circle (0.3ex);
\fill[black] (64,0) node[below]{$2$} circle (0.3ex);
\fill[black] (64,32) node[right]{$2+{\bf i}$} circle (0.3ex);

\draw[gray,thick] (92,0) -- (124,0) -- (124,32) -- (92,32);

\fill[black] (92,0) node[below]{$1$} circle (0.3ex);
\fill[black] (124,0) node[below]{$2$} circle (0.3ex);
\fill[black] (124,32) node[right]{$2+{\bf i}$} circle (0.3ex);
\fill[black] (92,32) node[left]{$1+{\bf i}$} circle (0.3ex);

\foreach \i in {0,...,3}
{
    \draw[gray,thick] (92+\i+2.5,29.5) -- (92+\i+2.5,2.5);
    \draw[gray,thick] (92+\i+2.5+6,29.5) -- (92+\i+2.5+6,2.5);
    \draw[gray,thick] (92+\i+2.5+18,29.5) -- (92+\i+2.5+18,2.5);
    \draw[gray,thick] (92+\i+2.5+24,29.5) -- (92+\i+2.5+24,2.5);
}
\draw[gray,thick] (92+2.5,2.5) -- (92+2.5,29.5) -- (124,29.5) -- (124,2.5);
\draw[very thick]  (124,29.5) -- (124,2.5);

\foreach \i in {0,...,3}
{
    \draw[gray,thick] (32+\i+2.5,29.5) -- (32+\i+2.5,2.5);
    \draw[gray,thick] (32+\i+2.5+6,29.5) -- (32+\i+2.5+6,2.5);
    \draw[gray,thick] (32+\i+2.5+18,29.5) -- (32+\i+2.5+18,2.5);
    \draw[gray,thick] (32+\i+2.5+24,29.5) -- (32+\i+2.5+24,2.5);
}
\draw[gray,thick] (32+2.5,2.5) -- (32+2.5,29.5) -- (64,29.5) -- (64,2.5);
\draw[very thick]  (64,29.5) -- (64,2.5);

\foreach \i in {0,...,3}
{
    \draw[gray,thick] (34.5/2+\i/2,29.5/2) -- (34.5/2+\i/2,2.5/2);
    \draw[gray,thick] (37.5/2+\i/2,29.5/2) -- (37.5/2+\i/2,2.5/2);
    \draw[gray,thick] (52.5/2+\i/2,29.5/2) -- (52.5/2+\i/2,2.5/2);
    \draw[gray,thick] (58.5/2+\i/2,29.5/2) -- (58.5/2+\i/2,2.5/2);
}
\draw[gray,thick] (34.5/2,2.5/2) -- (34.5/2,29.5/2) -- (64/2,29.5/2) -- (64/2,2.5/2);
\draw[very thick]  (64/2,29.5/2) -- (64/2,2.5/2);

\foreach \i in {0,...,3}
{
    \draw[gray,thick] (34.5/2+\i/2,16+29.5/2) -- (34.5/2+\i/2,16+2.5/2);
    \draw[gray,thick] (37.5/2+\i/2,16+29.5/2) -- (37.5/2+\i/2,16+2.5/2);
    \draw[gray,thick] (52.5/2+\i/2,16+29.5/2) -- (52.5/2+\i/2,16+2.5/2);
    \draw[gray,thick] (58.5/2+\i/2,16+29.5/2) -- (58.5/2+\i/2,16+2.5/2);
}
\draw[gray,thick] (34.5/2,16+2.5/2) -- (34.5/2,16+29.5/2) -- (64/2,16+29.5/2) -- (64/2,16+2.5/2);
\draw[very thick]  (64/2,16+29.5/2) -- (64/2,16+2.5/2);

\foreach \i in {0,...,3}
{
\fill[gray] (16,4+8*\i) node[left]{\large $\cdots$};
}

\end{tikzpicture}

\end{center}\vskip -1.0cm
\caption{A finite approximation of $K$.}\label{nlc-points}
\end{figure}
There is a unique compact set $K$ satisfying $K=A\cup f_1(K)\cup f_2(K)$ \cite[p.91, Theorem 9.1]{Barnsley93}. The left part of Figure \ref{nlc-points} is a simplified depiction of $A\cup f_1(A)\cup f_2(A)$ and indicates the inductive approach to define $K$. Clearly, $K$ is a continuum. If we denoted by $K_{NLC}$ the set of points $x\in K$ at which $K$ is not locally connected, then $E$ is a component of $\overline{K_{NLC}}$ and hence is an element of Moore's decomposition $\Dc_{K,LC}$. However, the following equation holds:
\begin{equation}\label{moore}
\lambda_K(x)=\left\{\begin{array}{ll}1& x\in K_{NLC}\\ 0& otherwise\end{array}\right.
\end{equation}
In particular, $\lambda_K(x)=0$ for all $x\in E$. Since the hyperspace $\Dc_{K,LC}$ is also a Peano continuum, we immediately see that the core decomposition $\Dc_K^{PS}$ strictly refines $\Dc_{K,LC}$.
\end{exam}

\begin{proof}[\bf Proof for Equation \ref{moore}] Our argument uses the following Lemma \ref{union}, which itself is of some interest. In this lemma, the two hyperspaces $\Dc_{K_1}^{PS}$ and $\Dc_{K_2}^{PS}$ are embedded into the hyperspace $\Dc_{K_1}^{PS}\cup\Dc_{K_2}^{PS}$ of $K$, given by the monotone decomposition $\Dc_{K_1}^{PS}\cup\Dc_{K_2}^{PS}$. Since $\Dc_{K_1}^{PS}$ and $\Dc_{K_2}^{PS}$ are both Peano spaces and since they intersect at finitely many points, their union is again a Peano space. Therefore, we have $\Dc_K^{PS}=\Dc_{K_1}^{PS}\cup\Dc_{K_2}^{PS}$.
\begin{lemma}\label{union}
Let $K_1,K_2\subset\mathbb{C}$ be compacta with $K_1\cap K_2$ a finite set. Let $K=K_1\cup K_2$. If for every $x\in K_1\cap K_2$ the element $D_i(x)$ of $\Dc_{K_i}^{PS}$ for $i=1,2$ equals $\{x\}$ then  $\Dc_K^{PS}=\Dc_{K_1}^{PS}\cup\Dc_{K_2}^{PS}$. In particular, we have $\{x\}\in \Dc_K^{PS}$ for all $x\in K_1\cap K_2$.
\end{lemma}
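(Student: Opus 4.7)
The strategy is to identify $\Dc_U:=\Dc_{K_1}^{PS}\cup\Dc_{K_2}^{PS}$ with $\Dc_K^{PS}$. First I would observe that the singleton hypothesis makes $\Dc_U$ a well-defined partition of $K$ into continua: two elements, one from each of $\Dc_{K_1}^{PS}$ and $\Dc_{K_2}^{PS}$, can only overlap at a point $x\in K_1\cap K_2$, where the hypothesis forces both to equal $\{x\}$. The induced equivalence relation on $K$ is closed, so $\Dc_U$ is upper semi-continuous; its hyperspace is $\Dc_{K_1}^{PS}$ and $\Dc_{K_2}^{PS}$ identified along the finite set of singleton classes $\{\{x\}:x\in K_1\cap K_2\}$, which is again a Peano space. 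The minimality in \cite[Theorem~7]{LLY-2017} then yields that $\Dc_K^{PS}$ refines $\Dc_U$.

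With $\pi_K:K\to\Dc_K^{PS}$ the natural projection, this refinement forces each element of $\Dc_K^{PS}$ into an element of $\Dc_U$; in particular any element meeting $K_1\cap K_2$ must equal $\{x\}$ for some $x\in K_1\cap K_2$. Consequently every element of $\Dc_K^{PS}$ lies wholly in $K_1$ or wholly in $K_2$, and $\pi_K(K_1)\cap\pi_K(K_2)=\pi_K(K_1\cap K_2)=:F$, a finite subset of $\Dc_K^{PS}$. My plan is then to show that $\pi_K(K_1)$ (and symmetrically $\pi_K(K_2)$) is itself a Peano space. Granting this, $\{E\in\Dc_K^{PS}:E\subset K_i\}$ will be an upper semi-continuous monotone decomposition of $K_i$ with Peano hyperspace $\pi_K(K_i)$, so the minimality of $\Dc_{K_i}^{PS}$ will force the two decompositions of $K_i$ to coincide, giving $\Dc_K^{PS}=\Dc_{K_1}^{PS}\cup\Dc_{K_2}^{PS}$.

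The open set $V:=\Dc_K^{PS}\setminus\pi_K(K_2)$ coincides with $\pi_K(K_1)\setminus F$; as an open subset of the Peano space $\Dc_K^{PS}$ it is locally connected, whence $\pi_K(K_1)$ is locally connected at every point outside $F$. At a point $z\in F$ I would use that $\Dc_K^{PS}$ is Peano to choose an arbitrarily small connected open neighborhood $W$ of $z$ in $\Dc_K^{PS}$ whose only intersection with $F$ is $z$. Setting $A=W\cap\pi_K(K_1)$ and $B=W\cap\pi_K(K_2)$, one has $A\cap B=\{z\}$. A disconnection $A=A_1\sqcup A_2$ into non-empty closed subsets with $z\in A_1$ would give $A_2\cap B\subset A\cap B=\{z\}$, and as $z\notin A_2$ this forces $A_2\cap B=\emptyset$; then $W=A_2\sqcup(A_1\cup B)$ would disconnect $W$, a contradiction. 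Hence $A$ is a connected open neighborhood of $z$ in $\pi_K(K_1)$, finishing local connectedness.

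The main obstacle is to verify the null-sequence property for the non-degenerate components of $\pi_K(K_1)$. The key step is to observe that a component $C$ of $\pi_K(K_1)$ disjoint from $F$ is actually a component of $V$: any $V$-component containing $C$ is a connected subset of $V\subset\pi_K(K_1)$, hence lies in the $\pi_K(K_1)$-component of any of its points, namely $C$. Because $V$ is open in the locally connected space $\Dc_K^{PS}$ its components are open in $\Dc_K^{PS}$, and combined with $C$ being closed in $\Dc_K^{PS}$ this makes $C$ clopen there, so $C$ is in fact a component of the Peano space $\Dc_K^{PS}$ itself. Since $\Dc_K^{PS}$ has only finitely many components of diameter exceeding a prescribed $\epsilon>0$, the same bound applies to these $C$; the remaining components of $\pi_K(K_1)$ each meet the finite set $F$ and so number at most $|K_1\cap K_2|$. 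Therefore $\pi_K(K_1)$ has only finitely many non-degenerate components of diameter exceeding any given $\epsilon>0$, completing the verification that $\pi_K(K_1)$ is Peano and hence establishing the lemma.
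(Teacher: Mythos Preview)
The first direction---that $\Dc_U:=\Dc_{K_1}^{PS}\cup\Dc_{K_2}^{PS}$ is an upper semi-continuous monotone decomposition of $K$ with Peano hyperspace, whence $\Dc_K^{PS}$ refines $\Dc_U$---is correct and is essentially all the paper offers; the paper then asserts equality without further argument. Your plan to close the loop by showing each $\pi_K(K_i)$ is itself a Peano space (so that $\Dc_{K_i}^{PS}$ refines the restriction of $\Dc_K^{PS}$ to $K_i$) is exactly the right idea and supplies what the paper leaves implicit.

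There is, however, a genuine gap in the execution: you twice treat $\Dc_K^{PS}$ as a locally connected space, once to deduce that $V$ is locally connected and once to deduce that the components of $V$ are open in $\Dc_K^{PS}$. A Peano space need not be locally connected as a whole; for instance $\{0\}\times[0,1]\cup\{(1/n,0):n\ge1\}\subset\mathbb{R}^2$ is a Peano space (a single non-degenerate component, itself an arc) but fails local connectedness at the origin. So neither step is justified as written, and in fact $\pi_K(K_1)$ itself may fail to be locally connected---only its non-degenerate components are required to be.

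The repair is to work inside components of $\Dc_K^{PS}$, which \emph{are} locally connected by the definition of Peano space. For the null-sequence claim: if $C$ is a component of $\pi_K(K_1)$ disjoint from $F$ and $C'$ is the component of $\Dc_K^{PS}$ containing it, then $C$ is a component of $C'\setminus F$ (any connected subset of $C'\setminus F$ lies entirely in one $\pi_K(K_i)$, since the two closed pieces meet only in $F$), hence open in the locally connected $C'$, hence clopen in $C'$, hence $C=C'$; your counting argument then goes through. For local connectedness: given a non-degenerate component $C$ of $\pi_K(K_1)$, again set $C'\supset C$ and run your neighbourhood arguments with $W$ chosen as a small connected open set in $C'$ rather than in $\Dc_K^{PS}$; both the off-$F$ case and your disconnection argument at points of $F$ go through verbatim after this change.
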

For any point $x$ lying in $K\setminus (K_{NLC}\cup E)$, it is clear that there is a small enough number $r_x>0$ such that the intersection of $K$ and the disk $D_x$, centered at $x$ with radius $r_x$, is either a segment (horizontal or vertical), or the union of two segments that has a $T$-shape or an up-side-down $T$-shape. For such an $x$, we apply Lemma \ref{union} to obtain that $D(x)=\{x\}$, indicating that $\lambda_K(x)=0$.

If $x\in K_{NLC}$ then it lies on some vertical segment, a ``tooth'' of some small copy of Cantor's comb. Directly we can check that the element $D(x)$ of $\Dc_K^{PS}$ containing $x$ is exactly that segment. This verifies $\lambda_K(x)=1$. Actually, we may put $A_1=A$ and $A_2=A\cup f_1(A)\cup f_2(A)$. Generally, for $n\ge3$ we put $A_n=A\cup f_1(A_{n-1})\cup f_2(A_{n-1})$. Then $\{A_n: n\ge1\}$ is an increasing sequence of continua whose union is equal to $K\setminus E$. Let $K_n=\overline{K\setminus A_n}$. Then $\{K_n: n\ge1\}$ is an infinite sequence of continua that decreasingly converge to $E$.
Moreover, the intersection $K_n\cap A_n$ is a finite set for all $n\ge1$. By Lemma \ref{union} we see that $\left\{\Dc_{A_n}^{PS}: n\ge1\right\}$ is an increasing sequence of sub-collections of $\Dc_K$.
A closer look at $A$ will lead us to the observation that every element of $\Dc_A^{PS}$ either is a single point off the teeth or coincides with a whole tooth. Combining this with Lemma \ref{union}, we can infer  that $\lambda_K(x)=1$ for all points $x$ that stay on a tooth for some small copy of Cantor's comb.

From the above arguments we can infer that for every $x\in (K\setminus E)$ the element of $\Dc_K^{PS}$ containing $x$, denoted as $D(x)$, equals the fiber of $\overline{R_K}$ at $x$, denoted as  $\overline{R_K}(x)$.

Lastly, we turn to the computation of $\overline{R_K}(x)$ for $x\in E$. The closed relation $\overline{R_K}$ is characterized in Theorem \ref{closure}, by which we may directly check that if $x\in E$ then the fiber $\overline{R_K}(x)$ equals the singleton $\{x\}$.
Therefore $\left\{\overline{R_K}(x): x\in K\right\}$ is a monotone decomposition, which necessarily equals the decomposition given by the Sch\"onflies equivalence on $K$. The Sch\"onflies equivalence is defined to be the smallest closed equivalence containing $R_K$. By \cite[Theorem 5 and 6]{LLY-2017} we have $\left\{\overline{R_K}(x): x\in K\right\}=\Dc_K^{PS}$. This verifies that $D(x)=\{x\}$ thus $\lambda_K(x)=0$ for $x\in E$.
\end{proof}

\bibliographystyle{plain}

\end{document}